\providecommand{\arxivurl}[1]{\href{https://arxiv.org/abs/#1}{arXiv:#1}}
\providecommand{\bnote}[1]{#1}
\newcommand{\rrvert}{\vert}
\newcommand{\rrVert}{\Vert}
\newcommand{\llvert}{\vert}
\newcommand{\llVert}{\Vert}
\theoremstyle{plain}
\newtheorem{theorem}{Theorem}[section]
\newtheorem{lemma}[theorem]{Lemma}
\newtheorem{proposition}[theorem]{Proposition}
\theoremstyle{definition}
\newtheorem{condition}[theorem]{Condition}
\newtheorem{remark}[theorem]{Remark}
\def\emptyset{\varnothing}
\begin{document}
\begin{frontmatter}

%\dochead{}
\title{Inferring diffusivity from killed diffusion}
%RUNTITLE: Inference for killed diffusion
\runtitle{Inference for killed diffusion}
%
%% If relates to Discussion/Comment or Rejoinder: (1) \title{...\thanksref{T1}} and (2) fill each Child's info in \relateddoi:
%\relateddois{T1}{Discussed in
% \relateddoi[title={\kaka{Child Title}},type={see-also}]{}{\kaka{Child DOI}}%
%%, \relateddoi[title={\kaka{Child Title}},type={see-also}]{}{\kaka{Child DOI}}%
%%; rejoinder at \relateddoi[title={\kaka{Child Title}},type={see-also}]{}{\kaka{Child DOI}}.
%}

\begin{aug}
% {\fnms{}~\snm{}\ead[label=e?]{}\orcid{}}
%
%% e-mail is mandatory for each author
%%% initials in inits and fnms (if any) with spaces
%
\author{\fnms{Richard}~\snm{Nickl}\ead[label=e1]{nickl@maths.cam.ac.uk}}
\author{\fnms{Fanny}~\snm{Seizilles}\ead[label=e2]{fps25@cam.ac.uk}}
%%\runauthor{} %% auto
%
%\dedicated{}
%
%%%%%% printead order as in MS:
\address{Department of Pure Mathematics and Mathematical Statistics,
University of Cambridge\printead[presep={,\ }]{e1,e2}}
%%%%%%
\end{aug}
%% Move support info to funding

% HISTORY:
\received{\smonth{3} \syear{2025}}% Updated by VTEXPTS2LaTeX.exe, 09.07.2026 11:31
\revised{\smonth{12} \syear{2025}}% Updated by VTEXPTS2LaTeX.exe, 09.07.2026 11:31

% ABSTRACT
\begin{abstract}
We consider diffusion of independent molecules in an insulated Euclidean
domain with unknown diffusivity parameter. At a random time and position,
the molecules may bind and stop diffusing in dependence of a given ``binding
potential.'' The binding process can be modeled by an additive random functional
corresponding to the canonical construction of a ``killed'' diffusion Markov
process. We study the problem of conducting inference on the infinite-dimensional
diffusion parameter from a histogram plot of the ``killing'' positions
of the process. We show first that these positions follow a Poisson point
process whose intensity measure is determined by the solution of a Schr\"odinger-type
equation. The inference problem can then be recast as a nonlinear inverse
problem for this partial differential equation, which we show to be consistently
solvable in a Bayesian way under natural conditions on the initial state
of the system, provided the binding potential is not too ``aggressive.''
In the proofs, we obtain novel posterior contraction rate results for high-dimensional
Poisson count data that are of independent interest. A~numerical illustration
of the estimator by MCMC methods is also provided.
\end{abstract}

% KEYWORDS
\begin{keyword}[class=MSC]
\kwd[Primary ]{62G05}
\kwd[; secondary ]{60G55, 62F15, 35Q62}
\end{keyword}

\begin{keyword} %% 1 u.c.
\kwd{Bayesian inverse problems}
\kwd{Poisson point processes}
\kwd{killed diffusion}
\end{keyword}
\end{frontmatter}
\setcounter{tocdepth}{2}
\tableofcontents%% if 50 pages and more
%%%%%%%%%%%%%%%%%%%%%%%%%%%%%%%%%%%%%%%%%%%%%%%%%%%%%%%%%%%%%%%%%%%%%%%%%
%%%% Main text entry area:
%s1 #&#
\section{Introduction}
\label{sec1}

%s1.1 #&#
\subsection{Motivation}
\label{sec1.1}

In a variety of imaging tasks in biochemistry (e.g.,
\cite{heltberg_physical_2021,heckert_recovering_2022,basu_live-cell_2021}),
one observes molecules diffusing in a ``nucleus'' $\Omega $---a region
that contains genetic material and which is enclosed by a membrane. Inside
such nuclei, certain proteins (such as chromatin) may be concentrated in
subcompartments, called ``foci.'' Diffusing molecules can be seen to bind
(``get trapped'') at random times, to then stop diffusing (or to visibly
diffuse much slower). The binding probability is higher in regions of high
protein concentration, but diffusion is still visible inside such foci,
and it is a scientifically relevant question whether diffusive behavior
of unbound molecules inside of foci is substantially different from outside.
The presence of binding events makes it difficult to discern such effects
with high-resolution microscope measurements, and the main ideas of this
article are concerned with the task of inferring the diffusivity parameter
$D$ of the underlying model from such data. To do this, we propose a Markovian
``binding'' model, where a motion of the molecule evolves according to
the basic diffusion equation (\ref{eq:diffuso}) below and where binding
is modeled by a ``killing'' time $S$, with a binding potential $q$ that
is given and which models the (observable) density of chromatin inside
of $\Omega $. We will show that inference on $D$ is possible from measuring
only the relative proportions of binding positions, which avoids more complex
tracking measurements for individually diffusing molecules.

%s1.2 #&#
\subsection{Observation model and main results}
\label{sec1.2}

Diffusion of particles in a bounded open set
$\Omega \subset \mathbb{R}^{d}$ (called a ``domain'') is mathematically
described by a stochastic differential equation (SDE) with reflection at
the boundary $\partial \Omega $: we consider the Markov process
$(X_{t}: t \ge 0)$ starting at initial distribution
$X_{0} \sim \phi $ and then evolving according to the SDE,
%
%e1 #&#
\begin{equation}
dX_{t}=\nabla D(X_{t})\,dt +\sqrt{2D(X_{t})}
\,dW _{t}+\nu (X_{t})\,dL _{t}, \quad t>0,
\label{eq:diffuso} %%LEAP%%%\label{eq1}
\end{equation}
where $D : \Omega \to [D_{\mathrm{min}},\infty )$, $D_{\mathrm{min}}>0$, is a real-valued diffusion
parameter with Lipschitz gradient vector field $\nabla D$, and
$(W_{t})$ is a $d$-dimensional Brownian motion. Moreover, $(L_{t})$ is
a continuous nondecreasing process, with nonzero increments only when
$X_{t}$ is at $\partial \Omega $, and
$\nu (x), x \in \partial \Omega $, is the inward pointing normal vector.
To avoid technicalities, we assume that $\Omega $ is convex with a smooth
boundary---the existence of a process $(X_{t})$ with almost surely continuous
sample paths is then shown in
\cite{tanaka_stochastic_1979,lions_stochastic_1984}. One can model a further
drift term $\nabla U(X_{t})$ in the above SDE (see after (\ref{postD})
for discussion), but the main challenges already emerge when
$\nabla U=0$.

We will introduce (in fact, recall) a Markovian model for such binding
events based on the notion of a ``killed diffusion process'' (see
\cite{williams_markov_2000,bass_diffusions_1998,chung_brownian_1995,bass_stochastic_2011})
associated to a potential $q : \Omega \to [0,\infty )$. Larger values of
$q(x)$ indicate a higher probability of the process to bind when diffusing
near $x \in \Omega $. It induces an additive functional
$\int _{0}^{t} q(X_{s})\,ds $, for example, if $q$ equals the indicator
$1_{A}$ of a subset $A$ of $\Omega $, this measures the occupation time
of the $(X_{t})$ process in $A$ up until $t$. Then let $Y$ be a (standard)
exponential random variable independent of $(X_{t})$. The
\textit{binding time} $S$ is the first time the cumulative binding integral
of the potential along the path exceeds $Y$; that is,~$S$ is defined as
%
%e2 #&#
\begin{equation}
S=\inf \biggl\{t:
\int _{0}^{t} q(X_{s})\,ds > Y
\biggr\}. \label{eq:binding-time} %%LEAP%%%\label{eq2}
\end{equation}
By properties of exponential random variables, this process is ``memory-less''
in the sense that not having bound by time $t$ does not affect the distribution
of binding events at later times. The killed diffusion process is defined
as
\begin{equation*}
\tilde{X_{t}} = %
\begin{cases}
X_{t} , \quad t < S,
\\
\dag , \quad t \ge S; \end{cases} %
\end{equation*}
so that $X_{t}$ is ``removed to the cemetery'' $\{\dagger \}$ at binding
time $S$. The ``binding'' location is given by $X_{S}$ encoding the terminal
position of the particle at time $S$ (see Figure \ref{fig1}). This captures the essence of the
biological mechanism (but is still a simplification as it does not model
the fact that molecules can also unbind, and may in fact not stop after
binding but just move much slower).

%f1 #&#
\begin{figure}
    \includegraphics[width=0.5\linewidth]{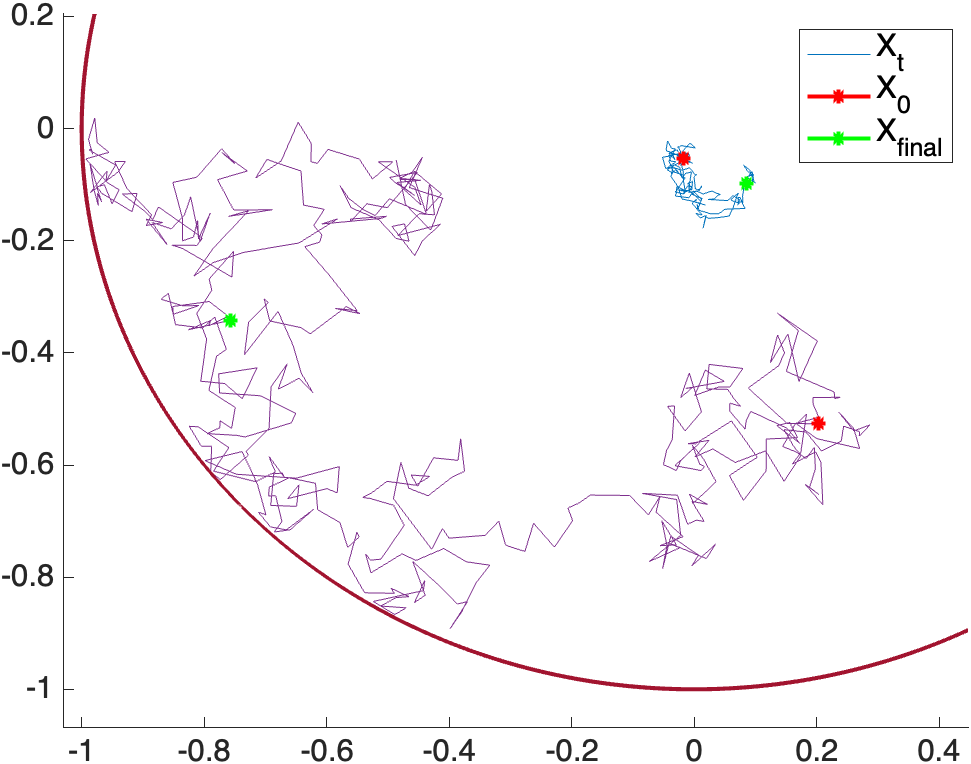}
\caption{Diffusion paths for 2 molecules. $X_{\mathrm{final}}$ denotes the terminal
position $X_{S}$ of the molecule after binding.}
\label{fig1}
\end{figure}

In the above mentioned applications from biochemistry (and elsewhere),
the aim is to make statistical inference on the functional parameter
$D$ from observing several molecules diffusing independently according
to (\ref{eq:diffuso}) in the presence of ``binding'' events. The equilibrium
distribution of the Markov process (\ref{eq:diffuso}) is uniform for all
$D$, and hence the occupation times spent by the process $X_{t}$ in particular
areas of $\Omega $ before binding are not informative for this task. If
one can track the trajectories of the particles, then methods such as those
from
\cite{nickl_consistent_2023,hoffmann_nonparametric_2024,giordano_statistical_2025}
(see also related contributions \cite{GHR04,NS17} for $d=1$ and for ``continuous''
data in \cite{NR20,GR22}) can be used to infer $D$ from the observed dynamics.
This generally requires a high-resolution imaging technique that can take
measurements for an extended period, but once a binding process is superimposed
(or in any case) it can become difficult or expensive to accurately collect
such data; many single particle tracking methods only allow for short path
measurements \cite{heckert_recovering_2022}.

A potentially much simpler observational signature is to take a dissection
of $\Omega $ into finitely many disjoint subsets
$B_{1}, \dots , B_{K}$ and to measure the relative frequency of binding
events $\{X_{S} \in B_{i}\}$ in each bin $B_{i}$.
\textit{The main contribution of this article is to demonstrate that merely
observing sufficiently dense histograms of the binding locations, rather
than the trajectories of diffusion, is enough to consistently identify
$D$.} This is shown to be true as long as (a) the initial state
$\phi $ of the system satisfies the mild identifiability Condition~\ref{ident} (of being either close to equilibrium, or appropriately prepared
by the experimenter), and (b) if the binding potential $q$ is not too ``aggressive''
(i.e., small enough relative to the parameter of the exponential variable
$Y$), but still strictly positive on an open set, so that all molecules
eventually bind at a finite time with probability one (see Proposition~\ref{prop:binding-time-finite}). If $n_{\mathrm{mol}}$ molecules
$X^{m}_{t}$, $m=1, \dots , n_{\mathrm{mol}}$, diffuse independently, denote by
%
%e3 #&#
\begin{equation}
\label{Ncount} N(A) = \sum_{m=1}^{n_{\mathrm{mol}}}
1\bigl\{X_{S}^{m} \in A\bigr\},\quad A \subset \Omega
~\text{(measurable)}, %%LEAP%%%\label{eq3}
\end{equation}
the count of the number of molecules having bound in the subregion
$A$ of $\Omega $ (regardless of the point in time when binding has occurred).
It is physically plausible to assume that the counts for two disjoint subsets
$A$, $A'$ are statistically independent. In this case, a consistent model
requires $n_{\mathrm{mol}}$ to be randomly chosen from a Poisson distribution since
it is a consequence of R\'enyi's theorem that such ``completely random''
point processes are necessarily Poisson processes (see Theorem~6.12 in
\cite{last_lectures_2017} and also the discussion in Section~\ref{conppp}).
Our first main result---whose details and proof will be given in Section~\ref{biostory} under Conditions \ref{qcond} and \ref{Dphicond}---is the
following.

%t1.1 #&#
\begin{theorem}
%%LEAP%%%\label{thm1.1}
\label{thm:poisson_process2}
For $n \in \mathbb N$, let $n_{\mathrm{mol}} \sim \operatorname{Poisson} (n)$ be the total number
of molecules diffusing independently according to (\ref{eq:diffuso}), each
with initial condition $X_{0} \sim \phi $ and binding at random time
$S$ with bounded potential $q \ge 0$ that is strictly positive on an open
subset $\Omega _{00}$ of $\Omega $. Then $N$ from (\ref{Ncount}) is a Poisson
point process with intensity measure $\Lambda $ defined by its Lebesgue
density $\lambda (x)=n q(x)u_{D,q}(x)$, $x \in \Omega $, where
$u=u_{D,q}$ solves the stationary Schr\"odinger equation
$ \nabla \cdot (D \nabla u) - qu = - \phi $ on $\Omega $ with Neumann boundary
conditions.
\end{theorem}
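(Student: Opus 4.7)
My plan splits the argument into a clean probabilistic step and a more technical analytic step. The first step is Poissonisation: since $n_{mol}\sim\text{Poisson}(n)$ and the $n_{mol}$ particles are conditionally iid, for any disjoint measurable sets $A_1,\dots,A_m\subset\Omega$ the vector $(N(A_1),\dots,N(A_m))$ given $n_{mol}=k$ is multinomial, and marginalising over the Poisson total yields mutually independent $\text{Poisson}(n\mu(A_i))$ marginals, where $\mu$ denotes the common law on $\Omega$ of a single-particle binding location $X_S$. This is the defining property of a Poisson point process of intensity $n\mu$, so it remains to identify $\mu$ with the claimed density $q(x)u_{D,q}(x)\,dx$.

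The second step is to compute $\mu$ via a Feynman-Kac calculation. Setting $A_t:=\int_0^t q(X_s)\,ds$ and using that $Y$ is standard exponential and independent of $(X_t)$, the definition (\ref{eq:binding-time}) of $S$ gives, for any bounded test function $f$ on $\Omega$,
\[
\E_\phi[f(X_S)] \;=\; \E_\phi\Big[\int_0^\infty f(X_{S(y)})\,e^{-y}\,dy\Big],\qquad S(y):=\inf\{t:A_t>y\}.
\]
Almost surely $A_\cdot$ is absolutely continuous with $dA_t=q(X_t)\,dt$ and $A_t\to\infty$ (Proposition \ref{prop:binding-time-finite}), so the substitution $y=A_t$ together with Fubini yields
\[
\E_\phi[f(X_S)] \;=\; \E_\phi\Big[\int_0^\infty f(X_t)\,q(X_t)\,e^{-A_t}\,dt\Big] \;=\; \int_\Omega f(x)\,q(x)\,u(x)\,dx,
\]
where $u(x) := \E_x\!\big[\int_0^\infty e^{-A_t}\phi(X_t)\,dt\big]$; here I use Lebesgue-self-adjointness of the generator $\Lcal u:=\nabla\cdot(D\nabla u)$ with Neumann data, equivalently the symmetry $p^q_t(y,x)=p^q_t(x,y)$ of the killed transition density, to swap forward and backward representations. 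Thus $u$ is the Green potential of $\phi$ for the Feynman-Kac semigroup $(P_t^q)$ of the reflected killed diffusion, and $\mu(dx)=q(x)u(x)\,dx$.

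The third step is to identify $u$ with the PDE solution by a standard Dynkin-It\^o argument. For a smooth candidate solution, It\^o's formula applied to $e^{-A_t}u(X_t)$ produces a local-time term multiplied by $\nabla u\cdot\nu$, which vanishes under the Neumann condition $\nabla u\cdot\nu=0$ on $\partial\Omega$, while the drift plus It\^o correction combine into $\Lcal u - qu$. The resulting process $M_t = e^{-A_t}u(X_t)+\int_0^t e^{-A_s}\phi(X_s)\,ds$ is a local martingale; letting $t\to\infty$ and using the strict positivity of $q$ on $\Omega_{00}$ together with Proposition \ref{prop:binding-time-finite} to ensure $e^{-A_t}u(X_t)\to 0$ recovers $u(x)=\E_x[\int_0^\infty e^{-A_s}\phi(X_s)\,ds]$ and hence $\Lcal u - qu = -\phi$ with Neumann boundary data, as required. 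The principal obstacle is analytic rather than probabilistic: one must establish enough regularity of $u$ to treat it as a classical (or strong) solution of the Schr\"odinger problem, control tails well enough to justify Fubini and the optional-stopping limit, and verify cleanly that the local-time reflection in (\ref{eq:diffuso}) translates into the Neumann condition at $\partial\Omega$. These technicalities should be routine given the smoothness of $\Omega$, Lipschitz $\nabla D$, and bounded $q$ assumed in the paper, together with the spectral gap furnished by $q>0$ on the open set $\Omega_{00}$.
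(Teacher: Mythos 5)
Your argument is correct in outline, but it reaches the identity $\mathrm{Law}(X_S)(dx)=q(x)u_{D,q}(x)\,dx$ by a genuinely different route than the paper. The Poissonisation step is the same as the paper's Section \ref{conppp} (mixed binomial process with Poisson total, cf.\ Propositions 3.5 and 3.8 in the cited Last--Penrose reference). The difference is in the middle: the paper first establishes the full space-time density of $Z=(X_S,S)$ (Proposition \ref{prop:density-rectangles}) via an infinitesimal computation $P(Z\in A\times(t,t+h])=h\int_A q\,v_{D,q}(t,\cdot)+o(h)$ (Lemma \ref{lemma:infinitesimal}), requiring matching upper and lower bounds, an absolute-continuity argument for the distribution function of the binding time, and Lebesgue differentiation; it then integrates out time using the spectral identity (\ref{avsch}). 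You instead use the exact time-change $\int_0^\infty f(X_{S(y)})e^{-y}\,dy=\int_0^\infty f(X_t)q(X_t)e^{-A_t}\,dt$ (the image of Lebesgue measure under the generalised inverse of $A$ is $dA_t=q(X_t)\,dt$), which delivers the spatial marginal of $X_S$ in one stroke and entirely bypasses the $o(h)$ analysis. The trade-offs: your route needs the symmetry $p_t^q(x,y)=p_t^q(y,x)$ of the killed Neumann kernel to pass from the forward average over $X_0\sim\phi$ to the backward Feynman--Kac potential $u(x)=\E_x[\int_0^\infty e^{-A_t}\phi(X_t)\,dt]$ (legitimate here, since the generator is in divergence form with Lebesgue as symmetrising measure, and the paper uses the equivalent fact implicitly through its spectral decomposition and (\ref{fkacellip})); the paper's route is longer but yields the joint space-time intensity $q(x)v_{D,q}(t,x)$, which is of independent interest and is what the authors actually record in Proposition \ref{prop:density-rectangles}. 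Your third step (It\^o--Dynkin verification that the Green potential solves the Schr\"odinger problem with Neumann data, the local-time term being annihilated by $\nabla u\cdot\nu=0$) is exactly what the paper outsources to the Feynman--Kac theorem it cites for (\ref{fkacellip}), and the regularity and integrability caveats you flag are indeed covered by Conditions \ref{qcond}--\ref{Dphicond} and the spectral gap of Lemma \ref{qgap}. I see no gap; only note that $A_\infty=\infty$ a.s.\ (equivalently $P(S<\infty)=1$, Proposition \ref{prop:binding-time-finite}) must be invoked, as you do, to make the substitution exhaust all of $[0,\infty)$ in $y$.
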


The connection between the intensity $\lambda $ and an elliptic partial
differential equation (PDE) revealed by the previous theorem allows to
recast the problem within the paradigm of Bayesian nonlinear inverse problems
\cite{stuart_inverse_2010,nickl_bayesian_2023} and to adapt techniques
from
\cite{monard_consistent_2021,nickl_bayesian_2023,nickl_consistent_2023}
to the present setting. We establish analytical properties of the nonlinear
PDE solution map $D \mapsto u_{D,q}$, including its injectivity under certain
hypotheses (discussed in detail after Theorem~\ref{yetagain}), to show
that this inverse problem can be solved in principle. We further obtain
in Section~\ref{pcont} a novel posterior contraction theorem for Poisson
count data, which is of independent interest for the theory of nonlinear
inverse problems, and from which we deduce the following.
%
%t1.2 #&#
\begin{theorem}%
%%LEAP%%%\label{thm1.2}
\label{showoff}
Suppose $q \ge 0$ is known, strictly positive on an open subset
$\Omega _{00}$ of $\Omega $, and that its Sobolev norm
$\|q\|_{H^{2}}$ is sufficiently small. Assume moreover that the initial
condition $X_{0} \sim \phi $ satisfies the identifiability Condition~\ref{ident}. Then there exists an estimator $\hat D_{n}$ (arising from
the posterior mean of a Gaussian process prior) based on observations of
the Poisson point process from the previous theorem such that any positive
$D_{0} \in H^{\alpha}(\Omega )$, $\alpha >2+d$ that is constant near
$\partial \Omega $ can be recovered as $n \to \infty $ at convergence rate
\begin{equation*}
\llVert \hat D_{n} - D_{0} \rrVert _{L^{2}}
= O_{P_{D_{0}}}\bigl(n^{-\beta}\bigr)\quad \text{for some } \beta
>0.
\end{equation*}
\end{theorem}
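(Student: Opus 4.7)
My plan is to cast the problem as a Bayesian nonlinear inverse problem in the spirit of \cite{monard_consistent_2021, nickl_bayesian_2023, nickl_consistent_2023}, the novelty being that the data is a Poisson point process rather than Gaussian regression. The first ingredient is the analysis of the forward map $G : D \mapsto u_{D,q}$ induced by the Schr\"odinger--Neumann PDE of Theorem~\ref{thm:poisson_process2}. Using the positivity of $\phi$ together with the maximum principle one obtains that $u_{D,q}$ is bounded above and below by positive constants uniformly on $H^\alpha$-balls of $D$'s; standard Schauder/Sobolev estimates then show that $G$ is Lipschitz between Sobolev spaces. Linearising the PDE in $D$ represents $dG_D[h]$ via an auxiliary Schr\"odinger equation whose solvability is secured by the smallness of $\|q\|_{H^2}$; combined with the injectivity assertion of Theorem~\ref{yetagain} and Condition~\ref{ident} on $\phi$, this should deliver a conditional stability estimate
\begin{equation*}
\|D - D_0\|_{L^2(\Omega)} \lesssim \|u_{D,q} - u_{D_0,q}\|_{L^2(\Omega)}^{\kappa}
\end{equation*}
for some $\kappa \in (0,1]$, valid on a bounded Sobolev ball; the exponent $\kappa$ together with the nonparametric rate from the Bayesian step will ultimately determine $\beta$.

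The second ingredient is a general posterior contraction theorem for Poisson point process data, which I would prove by adapting the Ghosal--Ghosh--van~der~Vaart framework to this likelihood. The natural loss is the Hellinger distance $h^2(\lambda_1,\lambda_2) = \int_\Omega (\sqrt{\lambda_1}-\sqrt{\lambda_2})^2\,dx$, equivalent for intensities bounded above and below to $\|\lambda_1-\lambda_2\|_{L^2}^2/n$, together with the associated KL divergence $K(\lambda_1,\lambda_2) = \int_\Omega (\lambda_1\log(\lambda_1/\lambda_2) - \lambda_1 + \lambda_2)\,dx$. Bennett/Bernstein concentration for the Poisson log-likelihood ratio on sieves of controlled covering entropy will supply exponentially powerful Hellinger tests; combined with the standard prior-mass bound, this yields a contraction rate $\varepsilon_n$ of the posterior around $\lambda_{D_0}$ in the Hellinger metric, for any prior satisfying the usual KL, prior-mass and entropy hypotheses.

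The third ingredient is to place a rescaled Mat\'ern-type Gaussian process prior on $D$, composed with a smooth link function enforcing $D \ge D_{\min}$ and cut off to be constant near $\partial\Omega$, as in \cite{nickl_bayesian_2023}. Van der Vaart--van Zanten concentration-function estimates give the prior-mass bound $\Pi_n(\|D-D_0\|_{H^2} < \varepsilon_n) \ge \exp(-c n \varepsilon_n^2)$ at $\varepsilon_n \asymp n^{-\alpha/(2\alpha+d)}$ (up to logs); composed with the Lipschitz continuity of $G$ this verifies the KL/prior-mass hypotheses of the Poisson contraction theorem, and Borell-type arguments supply a sieve of bounded $H^\alpha$-norm with the required metric entropy. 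The output is $L^2$-contraction of $u_{D,q}$ around $u_{D_0,q}$ at rate $\varepsilon_n$, which the conditional stability of the first step then converts into $L^2$-contraction of $D$ around $D_0$ at rate $\varepsilon_n^\kappa = n^{-\beta}$. The upgrade from posterior contraction to convergence of the posterior mean $\hat D_n$ is a standard truncation-plus-Jensen argument on the sieve where $\|D\|_\infty$ is uniformly bounded, cf.\ Theorem~2.3.2 of \cite{nickl_bayesian_2023}.

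The main obstacle is the nonlinear stability estimate of the first step. Injectivity of $D \mapsto u_{D,q}$ from a single source $\phi$ is subtle: near the uniform equilibrium $\phi \equiv |\Omega|^{-1}$ with $q$ small the intensity carries essentially no information about $D$, and for a general $\phi$ the linearisation can develop a nontrivial kernel once $q$ is large. The smallness of $\|q\|_{H^2}$ and the identifiability Condition~\ref{ident} on $\phi$ are precisely what allow a uniform inversion of the linearisation on the prior support, and hence a usable exponent $\kappa$. By contrast, the Poisson contraction theorem is technically novel but conceptually follows the density-estimation playbook; it is the PDE/inverse-problem analysis, and not the Bayesian machinery, that drives the hypotheses of the theorem.
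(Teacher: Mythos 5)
Your overall architecture matches the paper's: forward regularity and Lipschitz estimates for $D\mapsto \lambda_D=qu_{D,q}$, a stability (inverse-continuity) estimate driven by Condition \ref{ident} and smallness of $q$, a rescaled Gaussian prior with small-ball bounds, a posterior contraction theorem for Poisson data, and the standard upgrade to the posterior mean. Two of your steps, however, diverge from what the paper actually does, and one of them is exactly where the paper locates its technical novelty. For the statistical step you propose Hellinger-testing \`a la Ghosal--Ghosh--van der Vaart with Bennett/Bernstein control of the log-likelihood ratio on entropy-controlled sieves. The paper explicitly argues that this machinery does not apply in a straightforward way here: the data are $K=K_n\to\infty$ independent but non-identically distributed bin counts $Y_i\sim Poi(n\Lambda(B_i))$, the effective sample size is random, and the Poisson log-likelihood ratios are unbounded. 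Instead it builds tests from a sharp $\ell_1$-concentration inequality for the plug-in histogram estimator $\hat\Lambda_i=Y_i/n$ (Theorem \ref{thm:concentration-ineq}, in the spirit of \cite{gine_rates_2011}), proves an evidence lower bound via the divergences $\mathcal D_{2,K},\mathcal D_{\infty,K}$ (Lemmata \ref{klbd}, \ref{lemma:prelim-stats}), and then passes from the $\ell_1$ bin metric to $L^1$ of the densities by histogram approximation, so the final rate is governed by $\sqrt{K/n}+K^{-1/d}\asymp n^{-1/(2+d)}$ rather than by the prior rate $n^{-\alpha/(2\alpha+d)}$. Your route is not obviously doomed (Birg\'e-type tests exist for arbitrary dominated product experiments separated in Hellinger distance), but you are waving at precisely the step the authors considered hard enough to replace; as written, ``Bennett/Bernstein concentration \ldots will supply exponentially powerful Hellinger tests'' is the gap, not a lemma.

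For the stability step, the paper does not invert the linearisation $dG_D[h]$. It proves the Lipschitz estimate $\|D_1-D_2\|_{L^2(\Omega)}\le C\|u_{D_1,q}-u_{D_2,q}\|_{H^2(\Omega_{00})}$ (Theorem \ref{yetagain}) via the transport-operator lower bound $\|\nabla\cdot(h\nabla u_0)\|_{L^2}\gtrsim\|h\|_{L^2}$ of Lemma \ref{transplem}, verifying its hypothesis for $u_{D_1,q}$ by perturbing from the $q=0$ Neumann problem (this is where both Condition \ref{ident} and the smallness of $\|q\|_{H^2}$ enter). The H\"older exponent $\kappa<1$ in your conditional stability inequality is then \emph{not} produced by the PDE analysis: it comes from the interpolation inequality (\ref{interpol2}) between $L^1$ and $W^{b+2,1}$, needed to pass from the $L^1(\Omega_{00})$ contraction of $\lambda_D$ delivered by the histogram argument to the $H^2(\Omega_{00})$ norm that the stability estimate requires. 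Also note a smaller inaccuracy: the prior-mass bound you quote, $\Pi_n(\|D-D_0\|_{H^2}<\varepsilon_n)\ge e^{-cn\varepsilon_n^2}$ with $\varepsilon_n\asymp n^{-\alpha/(2\alpha+d)}$, is not what the concentration-function calculus gives; the paper's Lemma \ref{gpsmall} bounds the probability of an $L^2$-ball of radius $\bar\varepsilon_n$ intersected with a \emph{fixed}-radius $H^a$-ball, and the sup-norm control needed for $\mathcal D_{\infty,K}$ comes from the separate $L^\infty$--$H^{a'}$ forward Lipschitz estimate of Lemma \ref{fwdregu}, not from an $H^2$ small ball at rate $\varepsilon_n$.
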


See Theorem~\ref{thm:inference-diffusivity} for full details. The posterior
mean can be computed by standard MCMC and numerical PDE methods; see Figure~\ref{fig:intro} and after (\ref{postD}) for more discussion.

%f2 #&#
\begin{figure}
\includegraphics[width=\linewidth]{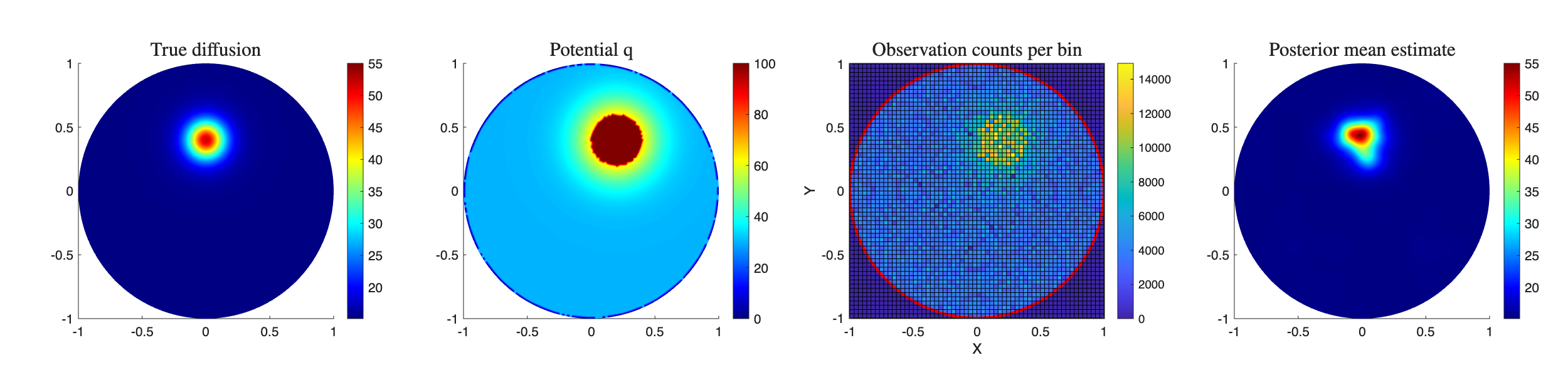}
\caption{Example of density reconstruction through MCMC, for
$n=10^{7}$. The true diffusivity function (1st) and killing potential (2nd)
generate the random observation counts (3rd) on each of the $K=2911$
bins. The posterior mean computed via MCMC is displayed on the right. For
more details about the implementation, see Section~\ref{subsec:numerics}.}
\label{fig:intro}
\end{figure}
%%LEAP%%%\label{fig2}

%s1.3 #&#
\subsection{Basic notation}
\label{sec1.3}

The $L^{p}$, H\"older, $L^{2}$- and $L^{p}$-Sobolev spaces over a bounded
open set $\Omega $ in $\mathbb{R}^{d}$ with smooth boundary are denoted
by
$L^{p}(\Omega )$, $C^{a}(\Omega )$, $H^{a}(\Omega )$, $W^{a,p}(\Omega )$, respectively.
We denote by $\|\cdot \|_{B}$ the norm on the resulting Banach space
$B$, and $\langle \cdot , \cdot \rangle _{L^{2}}$ denotes the inner product
on $L^{2}$. By $\|H\|_{\infty}$, we always denote the supremum norm over
the domain of the map $H$, and $dx$ denotes Lebesgue measure on
$\Omega $, while $\nabla $, $\nabla \cdot $, $\Delta $ denote the gradient,
divergence and Laplace operator, respectively. We also denote by
$\mathbb{R}^{+}=[0,\infty )$.

We recall the well-known multiplication inequalities
%
%e4 #&#
\begin{equation}
\label{multi} \llVert u v \rrVert _{H^{a}} \lesssim \llVert u
\rrVert _{B^{a}} \llVert v \rrVert _{H^{a}},\quad u \in
B^{a}, v \in H^{a}, %%LEAP%%%\label{eq4}
\end{equation}
where $B^{a}=C^{a}$ or, if $a>d/2$, $B^{a}=H^{a}$. We also make frequent
use of the standard interpolation inequalities for Sobolev spaces. First,
we have for all $u \in W^{a,p}$ that
%
%e5 #&#
\begin{equation}
\label{interpol} \llVert u \rrVert _{W^{j,p}} \lesssim \llVert u \rrVert
_{W^{a,p}}^{j/a} \llVert u \rrVert _{L^{p}}^{(a-j)/a},\quad 0
\le j \le a, 1\le p<\infty ; %%LEAP%%%\label{eq5}
\end{equation}
see page135 in \cite{adams_sobolev_2003}. The following inequality will
also be useful (cf.~page~139 in \cite{adams_sobolev_2003}):
%
%e6 #&#
\begin{equation}
\label{interpol2} \llVert u \rrVert _{L^{2}} \lesssim \llVert u \rrVert
^{1-\theta}_{W^{a,1}} \llVert u \rrVert ^{\theta}_{L^{1}},
\quad u \in W^{a,1} %%LEAP%%%\label{eq6}
\end{equation}
as long as $a>d/2$ and with $\theta = (a-d/2)/a$.

%s2 #&#
\section{Posterior consistency for high-dimensional Poisson count data}
%%LEAP%%%\label{sec2}
\label{pcont}

In order to prove Theorem~\ref{showoff}, we need a general posterior contraction
theorem for Poisson count data with a finite but possibly large number
$K$ of ``bins'' $B_{1}, \dots, B_{K}$. This is the appropriate measurement
setting for various physical experiments in inverse problems (beyond the
context of this paper, also, e.g., with $X$-ray imaging,
\cite{monard_efficient_2019,monard_consistent_2021}). While there are
several contributions to posterior consistency with Poisson process data
(see
\cite{belitser_rate-optimal_2015,kirichenko_optimality_2015,donnet_posterior_2017,gugushvili_fast_2020,giordano_nonparametric_2023},
Section~10.4.4 in \cite{ghosal_fundamentals_2017}, as well as references
therein), we are not aware of a result that provides what we require. A
main challenge arises from the fact that the relevant data is just independent
but not i.i.d., that the sample size is random, and that the log-likelihood
ratios appearing with the Poisson family are not bounded, so that the machinery
from \cite{ghosal_fundamentals_2017} via Hellinger testing does not apply
in a straightforward way. We develop here some techniques based on the
alternative approach of \cite{gine_rates_2011} to establish tests by sharp
concentration properties of appropriate estimators---these are of independent
interest and can be found in Section~\ref{tprf}.

%s2.1 #&#
\subsection{Measurement setup and information inequalities}
%%LEAP%%%\label{sec2.1}
\label{measset}

We consider a measurable space $(\mathbb O, \mathcal B)$ and a collection
$\mathcal L$ of finite measures
$\Lambda : \mathcal B \to [0,\infty )$. Let $B_{1}, \dots , B_{K}$ be a
measurable partition $\mathbb O = \bigcup_{i=1}^{K} B_{i}$. We are given
observations drawn independently across the ``bins'' $B_{i}$ from a Poisson
distribution
%
%e7 #&#
\begin{equation}
\label{pcount} Y_{i} \sim \operatorname{Poisson}\bigl(n \Lambda
(B_{i})\bigr), \quad i=1, \dots , K. %%LEAP%%%\label{eq7}
\end{equation}
We can think of $n \Lambda (\mathbb O)$ as the ``average number of observations,''
and we assume for simplicity that $n$ is known. This models the setting
where we measure $K$ batches of count data arising from $n$ realizations
of a Poisson point process over $\mathbb O$ with intensity measure
$\Lambda $. We consider a high-dimensional setup where
$K=K_{n} \to \infty $ with $n \to \infty $, so that we may recover finer
properties of $\Lambda $ (such as its density w.r.t.~some dominating measure);
see Section~\ref{denrate}.

To proceed, consider $K$ realizations $Y_{1}=y_{1},\ldots $, $Y_{K}=y_{K}$ of
(\ref{pcount}). The probability density of each observation $Y_{i}$ is
given by
\begin{equation*}
p_{i,\Lambda , n}(y_{i})=e^{-n\Lambda (B_{i})} \frac{(n\Lambda (B_{i}))^{y_{i}}}{y_{i}!},
\quad y_{i} \in \mathbb N \cup \{0\} \equiv \mathbb
N_{0},
\end{equation*}
and by independence the resulting likelihood function is
%
%e8 #&#
\begin{equation}
\label{lik} p_{\Lambda}(y_{1}, \dots ,
y_{K}) \equiv p^{K}_{\Lambda , n}
(y_{1},\ldots,y_{K})= \prod
_{i=1}^{K} p_{i,\Lambda , n}(y_{i}).
%%LEAP%%%\label{eq8}
\end{equation}
Denote by $P_{\Lambda}=P_{\Lambda ,n}^{K}$ and
$\mathbb E_{\Lambda }= \mathbb E_{n,\Lambda}^{K}$ the probability measure
and expectation operator with respect to the density
$p^{K}_{\Lambda ,n}$, respectively.

For two finite measures $\Lambda $, $\Lambda _{0}$ on $\mathbb O$, we define
a (semi)metric in sequence space as
%
%e9 #&#
\begin{equation}
\label{seqdist} \llVert \Lambda - \Lambda _{0} \rrVert
_{\ell _{1}} = \sum_{i=1}^{K}
\bigl\llvert \Lambda (B_{i})-\Lambda _{0}(B_{i})
\bigr\rrvert %%LEAP%%%\label{eq9}
\end{equation}
as well as the following ``information divergences''
%
%e10 #&#
\begin{equation}
\label{seqdiv} \mathcal D^{2}_{2,K} (\Lambda ,\Lambda
_{0}) = \sum_{i=1}^{K}
\frac{ \llvert \Lambda (B_{i})-\Lambda _{0}(B_{i}) \rrvert ^{2}}{\Lambda _{0}(B_{i})},\qquad \mathcal D_{\infty , K}(\Lambda ,\Lambda
_{0}) = \max
_{1\le i\le K} \frac{ \llvert \Lambda (B_{i})-\Lambda _{0}(B_{i}) \rrvert }{\Lambda _{0}(B_{i})}, %%LEAP%%%\label{eq10}
\end{equation}
which are understood to equal $\infty $ if $\Lambda _{0}(B_{i})=0$ for
some $i$ for which $\Lambda (B_{i}) \neq 0$ while $0/0=0$ by convention.
We now give a ``local'' bound for the underlying information distance.

%l2.1 #&#
\begin{lemma}%
%%LEAP%%%\label{lem2.1}
\label{klbd}
Let $K \in \mathbb N$ and let $\Lambda $, $\Lambda _{0}$ be finite measures
on $\mathbb O$ such that
$\mathcal D_{\infty , K}(\Lambda , \Lambda _{0})<1/2$. Then we have
\begin{equation*}
\mathbb{E}_{\Lambda _{0}} \biggl(\log \frac{p_{\Lambda _{0},n}^{K}}{p_{\Lambda ,n}^{K}}(Y_{1},
\ldots,Y_{K}) \biggr) \leq 2 n \mathcal D^{2}_{2,K}(
\Lambda , \Lambda _{0}).
\end{equation*}
\end{lemma}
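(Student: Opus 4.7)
The plan is to exploit the product structure of the Poisson likelihood to reduce the claim to a bin-wise univariate Poisson Kullback--Leibler estimate, and then conclude with an elementary Taylor bound.

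First, since the $Y_i$ are independent under $P_{\Lambda_0}$, the log-likelihood ratio splits as a sum of one-dimensional KL divergences,
$$\mathbb{E}_{\Lambda_0}\Bigl[\log \frac{p_{\Lambda_0,n}^{K}}{p_{\Lambda,n}^{K}}(Y_1,\dots,Y_K)\Bigr] = \sum_{i=1}^K \mathrm{KL}\bigl(Poi(n\Lambda_0(B_i))\,\|\,Poi(n\Lambda(B_i))\bigr).$$
Plugging in the explicit Poisson density and using $\mathbb{E}_{Poi(\mu_0)}[Y] = \mu_0$, a direct calculation gives the standard univariate identity
$$\mathrm{KL}\bigl(Poi(\mu_0)\,\|\,Poi(\mu)\bigr) = \mu_0\log(\mu_0/\mu) - \mu_0 + \mu$$
valid for $\mu_0, \mu > 0$. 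The hypothesis $\mathcal{D}_{\infty,K}(\Lambda,\Lambda_0) < 1/2$, being finite, rules out the case $\Lambda_0(B_i) = 0 \neq \Lambda(B_i)$; bins with $\Lambda_0(B_i) = 0 = \Lambda(B_i)$ contribute zero to both sides and can be discarded.

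Next I would write $r_i := (\Lambda(B_i) - \Lambda_0(B_i))/\Lambda_0(B_i)$, so that the hypothesis gives $|r_i| < 1/2$ and the $i$-th summand becomes $n\Lambda_0(B_i)\bigl[r_i - \log(1 + r_i)\bigr]$. The key estimate is the elementary inequality
$$r - \log(1+r) \leq r^2, \qquad |r| \leq 1/2,$$
which I would verify by checking that $f(r) := r^2 - r + \log(1+r)$ satisfies $f(0) = 0$ with derivative $f'(r) = r(2r+1)/(1+r)$ nonnegative on $[0, 1/2]$ and nonpositive on $[-1/2, 0]$, so $f \geq 0$ throughout $[-1/2, 1/2]$. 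Applying this bounds each summand by $n(\Lambda(B_i) - \Lambda_0(B_i))^2/\Lambda_0(B_i)$, and summing over $i$ yields $n\mathcal{D}_{2,K}^2(\Lambda, \Lambda_0)$, which is in fact sharper than the stated bound $2n\mathcal{D}_{2,K}^2$.

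There is no real obstacle here: the argument is a chain of exact identities followed by a one-line Taylor estimate, and the factor of $2$ in the statement is a comfortable safety margin (convenient if one later wants to relax the constant $1/2$ in the hypothesis). The only minor point is the bookkeeping at bins where $\Lambda_0(B_i) = 0$, which is handled automatically by the convention defining $\mathcal{D}_{\infty,K}$.
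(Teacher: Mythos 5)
Your proposal is correct and follows essentially the same route as the paper: both reduce by independence to a per-bin Poisson Kullback--Leibler term $n\Lambda_0(B_i)\bigl[r_i-\log(1+r_i)\bigr]$ and then bound $r-\log(1+r)$ by a multiple of $r^2$ on $|r|\le 1/2$. The only difference is cosmetic: the paper uses a Taylor expansion with Lagrange remainder (bounding the remainder factor by $4$, hence the constant $2$), whereas your monotonicity argument for $f(r)=r^2-r+\log(1+r)$ gives the slightly sharper constant $1$, which of course still implies the stated bound.
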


We can then obtain a lower bound on certain integrated likelihood ratios
(``evidence'' lower bound) that will be essential below. For this, we consider
a collection $\mathcal L$ of finite measures $\Lambda $ over
$(\mathbb O, \mathcal B)$ and equip $\mathcal L$ with a $\sigma $-field
$\mathcal S$ ensuring Borel measurability of the maps
%
%e11 #&#
\begin{equation}
\label{measur} (\Lambda , y) \to p_{\Lambda}(y_{1}, \dots
, y_{K})\quad \text{from } \mathcal L \times \mathbb
N^{K}_{0} \to \mathbb{R}. %%LEAP%%%\label{eq11}
\end{equation}

%l2.2 #&#
\begin{lemma}%
%%LEAP%%%\label{lem2.2}
\label{lemma:prelim-stats}
Fix $\Lambda _{0} \in \mathcal L$ s.t. $\Lambda _{0}(B_{i})>0$ for all
$i$. Let $\nu $ be a probability measure on
\begin{equation*}
A_{\epsilon}:= \bigl\{ \Lambda \in \mathcal L: \mathcal
D_{2,K}^{2}( \Lambda , \Lambda _{0}) \leq
\epsilon ^{2}, \mathcal D_{\infty , K}( \Lambda , \Lambda
_{0}) \leq 1/2 \bigr\} \in \mathcal S,\quad \epsilon >0.
\end{equation*}
Then for every $c>0$ there exists a constant $L_{c}>0$ such that for all
$n$, $\epsilon $, $K$, we have
\begin{equation*}
P_{\Lambda _{0}} \biggl(
\int _{A_{\epsilon}} \frac{p_{\Lambda}}{p_{\Lambda _{0}}}(Y_{1},
\ldots,Y_{K})\,d\nu (\Lambda ) \leq e^{-(2+c)n\epsilon ^{2}} \biggr) \leq
2e^{-L_{c} n\epsilon ^{2}}.
\end{equation*}
\end{lemma}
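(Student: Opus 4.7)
The plan is to reduce the evidence lower bound to an exponential deviation estimate for a linear combination of independent centred Poisson variables, and then obtain the latter from the exact Poisson moment generating function.

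First I would apply Jensen's inequality with respect to the probability measure $\nu$ and the convex function $-\log$ to get $-\log \int_A (p_\Lambda/p_{\Lambda_0})(Y)\, d\nu(\Lambda) \le \int_A \log(p_{\Lambda_0}/p_\Lambda)(Y)\, d\nu(\Lambda)$, so that the event in the statement is contained in $\{\int_A \log(p_{\Lambda_0}/p_\Lambda)(Y)\, d\nu \ge (2+c) n \epsilon^2\}$. By Fubini and Lemma \ref{klbd} applied pointwise in $\Lambda \in A$ (whose hypothesis $\mathcal D_{\infty,K}(\Lambda, \Lambda_0) \le 1/2$ is precisely built into $A$), the $P_{\Lambda_0}$-expectation of $\int_A \log(p_{\Lambda_0}/p_\Lambda)(Y)\, d\nu$ is at most $2 n \epsilon^2$. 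It therefore suffices to produce an exponential bound of the claimed form on the centred deviation $Z := \int_A \log(p_{\Lambda_0}/p_\Lambda)(Y)\, d\nu - \mathbb E_{\Lambda_0}[\cdot]$ at the threshold $c n \epsilon^2$.

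Second, from the Poisson log-density expansion $\log(p_{\Lambda_0}/p_\Lambda)(Y) = \sum_{i=1}^K [n(\Lambda(B_i) - \Lambda_0(B_i)) + Y_i \log(\Lambda_0(B_i)/\Lambda(B_i))]$, integration against $\nu$ followed by centring removes the deterministic term and leaves $Z = \sum_{i=1}^K \bar c_i (Y_i - n \Lambda_0(B_i))$ with $\bar c_i := \int_A \log(\Lambda_0(B_i)/\Lambda(B_i))\, d\nu(\Lambda)$. The two defining properties of $A$ translate into coefficient bounds: $\mathcal D_{\infty,K}(\Lambda, \Lambda_0) \le 1/2$ forces $\Lambda(B_i)/\Lambda_0(B_i) \in [1/2, 3/2]$, so that $|\bar c_i| \le \log 2 < 1$ uniformly; and Jensen against $\nu$ combined with $|\log t| \le 2|t-1|$ on $[1/2, 3/2]$ and Fubini gives $\sum_i n \Lambda_0(B_i) \bar c_i^2 \le 4 \int_A n \mathcal D_{2,K}^2(\Lambda, \Lambda_0)\, d\nu \le 4 n \epsilon^2$.

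Third, since the $Y_i$ are independent $\mathrm{Poi}(n \Lambda_0(B_i))$, the exact Poisson MGF yields $\mathbb E_{\Lambda_0} e^{\lambda Z} = \exp(\sum_i n \Lambda_0(B_i)(e^{\lambda \bar c_i} - 1 - \lambda \bar c_i))$. For $|\lambda| \le 1$ one has $|\lambda \bar c_i| \le \log 2 < 1$, so the elementary inequality $e^x - 1 - x \le x^2$ on $[-1,1]$ upgrades this to the sub-Gaussian-type estimate $\mathbb E_{\Lambda_0} e^{\lambda Z} \le \exp(4 n \epsilon^2 \lambda^2)$. The Chernoff bound $P_{\Lambda_0}(Z \ge c n \epsilon^2) \le \inf_{\lambda \in (0,1]} e^{-\lambda c n \epsilon^2 + 4 n \epsilon^2 \lambda^2}$, optimised at $\lambda = \min(c/8, 1)$, produces the required bound with explicit $L_c > 0$. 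The principal obstacle is precisely this exponential step: a Chebyshev-type argument (as in the classical Ghosal--van der Vaart evidence bound for bounded log-likelihood ratios) yields only polynomial decay in $n \epsilon^2$, and since Poisson variables are unbounded the usual bounded-Bernstein framework does not apply directly; one must exploit the exact Poisson MGF while carefully tracking that the range $|\lambda \bar c_i| \le 1$ is preserved under the hypotheses.
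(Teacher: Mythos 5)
Your proposal is correct and follows essentially the same route as the paper: Jensen's inequality with respect to $\nu$, the Kullback--Leibler bound of Lemma \ref{klbd} to centre at level $2n\epsilon^2$, and a Chernoff bound built from the exact Poisson moment generating function together with the Taylor estimates for $\log(1+x)$ on $|x|\le 1/2$ that the constraint $\mathcal D_{\infty,K}\le 1/2$ makes available. The only (harmless) variation is that you average the coefficients $\log(\Lambda_0(B_i)/\Lambda(B_i))$ over $\nu$ first and bound the MGF of the resulting single linear combination of centred Poissons, whereas the paper applies a second Jensen/Fubini step to push $\mathbb E_\nu$ outside the exponential and bounds the MGF for each fixed $\Lambda$; your version needs only the upper tail and so even dispenses with the factor $2$.
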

The proofs of these lemmas are not difficult and can be found in Section \ref{sec:supp_stats}.

%s2.2 #&#
\subsection{The basic contraction theorem}
\label{sec2.2}

Consider a prior $\Pi $ defined on the measurable sets $\mathcal S$ (from
before (\ref{measur})) of the collection $\mathcal L$ of ``intensity''
measures $\Lambda $ over $\mathbb O$. The posterior distribution arising
from data (\ref{pcount}) with likelihood (\ref{lik}) then arises from Bayes'
formula and is given by
%
%e12 #&#
\begin{equation}
\label{post} \Pi (A|Y_{1}, \dots , Y_{K}) =
\frac{\int _{A} p^{K}_{\Lambda ,n} (Y_{1}, \dots , Y_{K})
\,d\Pi  (\Lambda )}{\int _{\mathcal L} p^{K}_{\Lambda , n} (Y_{1}, \dots , Y_{K}) \,d\Pi  (\Lambda )},
\quad A \in \mathcal S. %%LEAP%%%\label{eq12}
\end{equation}
Given the preparations from Section~\ref{measset}, the proof (to be found
in Section \ref{sec:supp_stats}) of the next theorem follows the standard pattern
\cite{ghosal_fundamentals_2017}, assuming the existence of sufficiently
good ``tests,'' which we will construct in Theorem~\ref{thm:concentration-ineq} below, using the concentration of measure
approach of \cite{gine_rates_2011} adapted to high-dimensional Poisson
count data.

%t2.3 #&#
\begin{theorem}%
%%LEAP%%%\label{thm2.3}
\label{contract}
Let $K=K_{n} \in \mathbb N$ and $\varepsilon _{n}>0$ be sequences such
that
\begin{equation*}
\frac{K_{n}}{n} \to 0,\qquad  \varepsilon _{n} \le \sqrt{
\frac{K_{n}}{n}}, \qquad \varepsilon _{n} \to 0, \qquad  \sqrt{n}
\varepsilon _{n} \to \infty .
\end{equation*}
Suppose a sequence of prior probability measures $\Pi =\Pi _{n}$ defined
on $\mathcal L$ satisfies, for some $C>0$, some
$\Lambda _{0} \in \mathcal L$, and all $n$,
%
%e13 #&#
\begin{equation}
\label{KLball} \Pi \bigl(\Lambda : \mathcal D_{2,K}^2(\Lambda ,\Lambda
_{0}) \leq \varepsilon _{n}^{2}, \mathcal
D_{\infty ,K}(\Lambda , \Lambda _{0})\leq 1/2 \bigr) \geq
e^{-C
n\varepsilon _{n}^{2}}. %%LEAP%%%\label{eq13}
\end{equation}
For fixed $\Lambda _{\mathrm{max}}<\infty $, define
$\mathcal{R}=\{\Lambda \in \mathcal L : \Lambda (\mathbb O) \le
\Lambda _{\mathrm{max}}\}$ and assume further that there exist measurable sets
$\Theta _{n} \subset \mathcal{L}$ such that, for all $n$ large enough and
some $L>0$,
%
%e14 #&#
\begin{equation}
\label{excess} \Pi \bigl(\mathcal L \setminus (\Theta _{n} \cap
\mathcal R)\bigr) \leq Le^{-(C+4)n
\varepsilon _{n}^{2}}. %%LEAP%%%\label{eq14}
\end{equation}
Then the posterior distribution $\Pi (\cdot | Y_{1},\ldots,Y_{K})$ from (\ref{post})
arising from data (\ref{pcount}) contracts about the ground truth intensity
measure $\Lambda _{0}$ in the $\ell _{1}$ metric; specifically, we have
for some $k>0$ that
\begin{equation*}
\Pi \bigl(\Lambda \in \Theta _{n} \cap \mathcal R: \llVert \Lambda
-\Lambda _{0} \rrVert _{
\ell _{1}} < M\sqrt{K_{n}/n}
| Y_{1},\ldots,Y_{K} \bigr) = 1 - O_{P_{
\Lambda _{0}}}
\bigl(e^{-k n \varepsilon _{n}^{2}}\bigr)
\end{equation*}
whenever $M$ is large enough depending on $C$, $\Lambda _{\mathrm{max}}$, $k$.
\end{theorem}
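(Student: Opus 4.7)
The plan is to follow the evidence-bound-plus-tests template for posterior contraction (Sec.~8.2 of \cite{ghosal_fundamentals_2017}) adapted to the Poisson bin-count setting of Section \ref{measset}. Setting $A_n := \{\Lambda \in \Theta_n \cap \mathcal R : \|\Lambda - \Lambda_0\|_{\ell_1} \ge M \sqrt{K_n/n}\}$, Bayes' formula gives
$$\Pi(A_n \mid Y_1,\dots,Y_K) = \frac{\int_{A_n} (p_\Lambda / p_{\Lambda_0})(Y)\, d\Pi(\Lambda)}{\int_{\mathcal L} (p_\Lambda / p_{\Lambda_0})(Y)\, d\Pi(\Lambda)},$$
so I need the denominator lower-bounded and the numerator exponentially small, both with high $P_{\Lambda_0}$-probability. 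For the denominator, I apply Lemma \ref{lemma:prelim-stats} with $\nu = \Pi(\cdot \cap B)/\Pi(B)$, where $B$ is the ball in (\ref{KLball}); combined with $\Pi(B) \ge e^{-Cn\varepsilon_n^2}$ this gives, for any fixed $c>0$ to be chosen below, an event $\Omega_1$ with $P_{\Lambda_0}(\Omega_1) \ge 1 - 2 e^{-L_c n\varepsilon_n^2}$ on which
$$\int_{\mathcal L}(p_\Lambda / p_{\Lambda_0})(Y)\, d\Pi(\Lambda) \ge \Pi(B)\, e^{-(2+c)n\varepsilon_n^2} \ge e^{-(C+2+c)n\varepsilon_n^2}.$$

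\textbf{Tests via Poisson concentration.} For the numerator, the main new ingredient is a test, since the unbounded log-likelihood ratios of the Poisson family preclude the standard Hellinger argument. I would base it on the plug-in estimator $\hat\Lambda(B_i) := Y_i/n$ of the intensity and set
$$\psi_n := \mathbf 1\big\{\|\hat\Lambda - \Lambda_0\|_{\ell_1} > (M/2)\sqrt{K_n/n}\big\}.$$
The key input, which Theorem \ref{thm:concentration-ineq} is designed to supply via the Gin\'e--Nickl approach \cite{gine_rates_2011}, is that, uniformly over $\Lambda \in \mathcal R$,
$$P_\Lambda\!\left(\|\hat\Lambda - \Lambda\|_{\ell_1} > (M/4)\sqrt{K_n/n}\right) \le 2 e^{-c_0 M^2 K_n},$$
once $M$ is large enough (depending on $\Lambda_{\max}$). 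This is obtained from Bernstein's inequality for the centred Poissons $Y_i - n\Lambda(B_i)$ combined with the Cauchy--Schwarz bound $\mathbb E_\Lambda\|\hat\Lambda - \Lambda\|_{\ell_1} \le \sqrt{K_n \Lambda(\mathbb O)/n}$; since $n\varepsilon_n^2 \le K_n$ the exponent $c_0 M^2 K_n$ dominates $c_0 M^2 n\varepsilon_n^2$. Applied at $\Lambda = \Lambda_0$ it gives $\mathbb E_{\Lambda_0}\psi_n \le 2 e^{-c_0 M^2 n\varepsilon_n^2}$, while for $\Lambda \in A_n$ the triangle inequality forces $\psi_n = 1$ off the analogous exponentially small event, whence $\sup_{\Lambda \in A_n}\mathbb E_\Lambda[1 - \psi_n] \le 2 e^{-c_0 M^2 n\varepsilon_n^2}$.

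\textbf{Assembly.} The sieve complement is handled by Fubini and the hypothesis (\ref{excess}),
$$\mathbb E_{\Lambda_0}\!\int_{\mathcal L \setminus (\Theta_n \cap \mathcal R)}\frac{p_\Lambda}{p_{\Lambda_0}}\, d\Pi = \Pi\big(\mathcal L \setminus (\Theta_n \cap \mathcal R)\big) \le L e^{-(C+4) n\varepsilon_n^2},$$
so Markov's inequality yields an event $\Omega_2$ of probability $\ge 1 - Le^{-n\varepsilon_n^2}$ on which this integral is $\le e^{-(C+3)n\varepsilon_n^2}$. Similarly, Fubini and the Type II bound give $\mathbb E_{\Lambda_0}\!\int_{A_n}(1-\psi_n)(p_\Lambda/p_{\Lambda_0})\, d\Pi = \int_{A_n}\mathbb E_\Lambda[1-\psi_n]\, d\Pi \le 2e^{-c_0 M^2 n\varepsilon_n^2}$, and a further Markov step promotes this to an almost-sure bound $e^{-c_0 M^2 n\varepsilon_n^2 /2}$ on an event $\Omega_3$ of probability $\ge 1 - 2e^{-c_0 M^2 n\varepsilon_n^2/2}$. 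On $\Omega_1 \cap \Omega_2 \cap \Omega_3 \cap \{\psi_n = 0\}$, dividing numerator by denominator yields
$$\Pi(A_n \mid Y) \le e^{(C+2+c)n\varepsilon_n^2}\!\left(e^{-(C+3)n\varepsilon_n^2} + e^{-c_0 M^2 n\varepsilon_n^2 / 2}\right) \le e^{-k n\varepsilon_n^2},$$
provided $M$ is chosen with $c_0 M^2 / 2 > C + 2 + c + k$; the contribution from $\{\psi_n = 1\}$ is absorbed into the same exponentially small bound on the complement.

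\textbf{Main obstacle.} The crux is the uniform Poisson concentration backing the test: because the $Y_i$ are unbounded and $K_n$ grows, obtaining a sub-Gaussian deviation bound for $\|\hat\Lambda - \Lambda\|_{\ell_1}$ at scale $\sqrt{K_n/n}$ with exponent $c_0 M^2 K_n$ requires combining Bernstein/Bennett-type inequalities for independent centred Poissons with a sharp control of the mean via Cauchy--Schwarz, exploiting the boundedness $\Lambda(\mathbb O) \le \Lambda_{\max}$ on the sieve to keep the variance proxy bounded. This is the substitute for the Hellinger-testing machinery unavailable here and is the technical heart of the argument (Theorem \ref{thm:concentration-ineq}); everything else in the proof is bookkeeping around the Bayes formula.
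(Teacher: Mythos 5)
Your proposal is correct and follows essentially the same route as the paper: the denominator is lower bounded via Lemma \ref{lemma:prelim-stats} applied to the normalised restriction of the prior to the ball in (\ref{KLball}), the tests are built from the plug-in estimator $\hat\Lambda_i = Y_i/n$ using the uniform concentration inequality of Theorem \ref{thm:concentration-ineq} over $\{\Lambda(\mathbb O)\le \Lambda_{max}\}$, and the numerator is controlled by Fubini plus Markov on the sieve complement and the Type II error. The only cosmetic discrepancy is that the exponent in Theorem \ref{thm:concentration-ineq} scales roughly linearly (not quadratically) in the threshold constant for large $t$, but this does not affect the argument since it can still be made an arbitrarily large multiple of $K_n \ge n\varepsilon_n^2$.
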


In typical applications of Bayesian nonparametrics,
$K \simeq n \varepsilon _{n}^{2}$ will be a natural choice that matches
the number of bins with the effective smoothness / dimension of the prior
for the unknown intensity $\Lambda $. However, below, we shall employ priors
that arise as pushforwards of PDE solution maps and, therefore, naturally
oversmooth. In this case, the contraction rate obtained from ``Poisson
bin count'' data will be dominated by the number of bins used, hence the
previous theorem is stated in this more flexible form.

The uniform bound on $\Lambda (\mathbb O)$ can be replaced by a condition
that the posterior concentrates on uniformly bounded intensities with frequentist
probability approaching one.

%s2.3 #&#
\subsection{Contraction rates for intensity densities}
%%LEAP%%%\label{sec2.3}
\label{denrate}

The $\ell _{1}$-metric on the bin counts is a natural metric for regression
problems arising from (\ref{pcount}), but does not identify the full intensity
measure for fixed $K$. However, as $K \to \infty $, if the measures
$\Lambda $ have sufficiently regular densities $\lambda $ with respect
to a joint dominating measure, we can recover these consistently as
$K \to \infty $. We now discuss this for the case relevant below where
$\mathbb O$ is a bounded subset of $\mathbb{R}^{d}$ of positive Lebesgue
measure $dx$ and the intensities have Lipschitz densities.

Denote the $L^{2}(dx)$-orthonormal functions arising from the measurable
partition of $\mathbb O$ by $e_{i} = 1_{B_{i}}/\sqrt{|B_{i}|}$ where
$|B|$ denotes the volume of $B$. The $L^{2}(dx)$ projection of
$g \in L^{2}$ onto $E_{K}=\operatorname{span}\{e_{i}: i \le K\}$ is then
\begin{equation*}
P_{E_{K}} g = \sum_{i \le K} \langle
e_{i}, g\rangle _{L^{2}} e_{i} = \sum
_{i=1}^{K} \frac{1}{ \llvert B_{i} \rrvert }
\int _{B_{i}}g(z)\,dz 1_{B_{i}}
\end{equation*}
and this makes sense for any finite signed measure $\lambda $ if we replace
$\int _{B_{i}} \lambda $ by $\Lambda (B_{i})$. For such $\lambda $, we
have
\begin{align*}
\llVert P_{E_{K}}\lambda \rrVert _{L^{1}} =
\int _{\mathbb O} \Biggl\llvert \sum_{i=1}^{K}
\frac{1}{ \llvert B_{i} \rrvert } \Lambda (B_{i}) 1_{B_{i}}(x) \Biggr
\rrvert \,dx \le \sum_{i=1}^{K} \bigl
\llvert \Lambda (B_{i}) \bigr\rrvert = \llVert \Lambda \rrVert
_{\ell _{1}}.
\end{align*}

Now given $x \in \mathbb O$, denote by $B_{i}(x)$ the unique element of
the partition containing $x$. Then if $\Lambda $ has a Lebesgue density
$\lambda $ that is $B$-Lipschitz, we obtain
\begin{equation*}
\bigl\llvert P_{E_{K}}\lambda (x) - \lambda (x) \bigr\rrvert = \biggl
\llvert \frac{1}{ \llvert B_{i}(x) \rrvert }
\int _{B_{i}(x)} \bigl(\lambda (z)-\lambda (x)\bigr)\,dz \biggr
\rrvert \le B \sup_{z \in B_{i}(x)} \llvert z-x \rrvert
_{
\mathbb{R}^{d}} \le c(B,d) h
\end{equation*}
if each bin $B_{i}$ has Euclidean diameter at most $h$. This forces
$K \simeq h^{-d}$ in Euclidean domains by standard volumetric arguments,
for example, page 373 in \cite{gine_mathematical_2016}. The previous bound
can be $dx$-integrated to give
%
%e15 #&#
\begin{equation}
\label{bias} \llVert P_{E_{K}}\lambda - \lambda \rrVert
_{L^{1}} \le c_{B,\mathbb O} h, %%LEAP%%%\label{eq15}
\end{equation}
which controls the $L^{1}$-approximation error from the ``bin count'' histogram
estimator. Using slightly refined approximation theory (as in Chapter~4.3
in \cite{gine_mathematical_2016}), the assumption of $B$-Lipschitzness
can be weakened to $\lambda $ having a bounded
$B^{1}_{1\infty}(\mathbb O)$-norm, if the latter Besov space is appropriately
defined on $\mathbb O$.

%t2.4 #&#
\begin{theorem}
\label{thm2.4}
In the setting of Theorem~\ref{contract}, suppose that $\mathbb O$ is a
bounded subset of $\mathbb{R}^{d}$ and that the bins
$(B_{i})_{i=1}^{K}$ have diameter bounded by a constant multiple of
$K^{-1/d}$. Assume further that
%
%e16 #&#
\begin{equation}
\label{Lipbd} \Theta _{n} \subseteq \{\text{the density }\lambda
\text{ of } \Lambda \text{ exists and is } B\text{-Lipschitz} \} %%LEAP%%%\label{eq16}
\end{equation}
for all $n$ and some $B>0$. Then if the density $\lambda _{0}$ of the true
intensity measure $\Lambda _{0}$ exists and is $B$-Lipschitz on
$\mathbb O$, we have as $n \to \infty $ that
%
%e17 #&#
\begin{equation}
\label{densrat} \Pi \bigl(\lambda : \llVert \lambda - \lambda _{0}
\rrVert _{L^{1}} > M \bigl(\sqrt{K/n} + K^{-1/d}
\bigr)|Y_{1}, \dots , Y_{K}\bigr) = O_{P_{\Lambda _{0}}}
\bigl(e^{-k n \varepsilon _{n}^{2}}\bigr) %%LEAP%%%\label{eq17}
\end{equation}
 for some large enough constant $M$ depending on
$B$, $d$, $\mathbb O$, $C$, $\Lambda _{\mathrm{max}}$ and some $k>0$.
\end{theorem}
\begin{proof}
The result follows from Theorem~\ref{contract}, (\ref{bias}) and
\begin{align*}
\llVert \lambda - \lambda _{0} \rrVert _{L^{1}}
& \le \llVert \Lambda -\Lambda _{0} \rrVert _{\ell _{1}}
+ \bigl\llVert P_{E_{K}}(\lambda - \lambda _{0}) - (
\lambda - \lambda _{0}) \bigr\rrVert _{L^{1}}.
\end{align*}
\end{proof}

For high-dimensional Poisson regression problems, the $L^{1}$-structure
is useful to deploy appropriate concentration of measure techniques in
the proof of Theorem~\ref{thm:concentration-ineq}. When we know that the
$W^{\beta ,1}$ norms of $\lambda -\lambda _{0}$ are uniformly bounded,
then we can interpolate (\ref{interpol2}) and obtain a contraction rate
for $\|\lambda -\lambda _{0}\|_{L^{2}}$ as well.

The approach based on histograms presented here is naturally confined to
obtain bounds of the order $O(K^{-1/d})$, which is sufficient to prove
consistency at ``algebraic'' rates $n^{-\beta}$ for some $\beta >0$, but
which prohibits to attain ``fast'' convergence rates for intensity measures
$\Lambda $ whose densities are smoother than Lipschitz. But note that for
Lipschitz densities $\lambda $, the rate of convergence obtained in (\ref{densrat})
as well as in Theorem~\ref{thm:concentration-ineq} is minimax optimal (as
can be proved similar to Chapter~6.3 in \cite{gine_mathematical_2016},
using Lemma~\ref{klbd} above and also Exercise~6.3.2 there). Note also
that the assumption of disjoint support of the bins is crucial to retain
independence of the $K$ samples in (\ref{pcount}), and obtaining faster
rates will require rather different techniques based on general functionals
of Poisson point processes that we do not wish to develop here.

%s3 #&#
\section{Derivation of the observation model and proof of Theorem~\ref{thm:poisson_process2}}
%%LEAP%%%\label{sec3}
\label{biostory}

The purpose of this section is to derive the Poisson process measurement
model from the axioms sketched above, and to prove Theorem~\ref{thm:poisson_process2} under natural regularity Conditions
\ref{qcond} and \ref{Dphicond} (to be introduced in the next subsection).
We recall the killed process $\tilde X_{t}$ from the introduction with
``binding'' time $S$ from (\ref{eq:binding-time}). Some well-known properties
of this process are explained, for example, in Section III.18 in
\cite{williams_markov_2000}: On the enlarged state space
$\Omega \cup \{\dagger \}$ where all measurable functions $f$ are extended
simply by setting $f(\dagger )=0$, one shows that $\tilde X_{t}$ is a Markov
process with infinitesimal generator equal to the Schr\"odinger operator
$\mathcal L_{D,q}$ from (\ref{eq:schrodinger-operator}). The law of
$\tilde X_{t}$ on $\Omega $ models the ``subprobability density'' of the
surviving particles at time $t$, given by the solution
$v_{D,q}(t,\cdot )$ of the PDE (\ref{schrodtime}). The first fact we prove
is that when $q$ is strictly positive on a set of positive Lebesgue measure,
the survival time of a particle is finite almost surely.

%p3.1 #&#
\begin{proposition}%
%%LEAP%%%\label{prop3.1}
\label{prop:binding-time-finite}
Suppose $q$, $D$, $\phi $ satisfy Conditions \ref{qcond} and
\ref{Dphicond}. Let $S$ be the binding time from (\ref{eq:binding-time}).
Then $P(S<\infty )=1$.
\end{proposition}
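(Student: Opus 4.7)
The plan is to condition on the diffusion path $(X_t)$ and exploit the exponential law of $Y$. Since $Y\sim\mathrm{Exp}(1)$ is independent of $(X_t)$,
\[
P(S>t\mid (X_s)_{s\ge 0})=P\!\left(Y\ge\int_0^t q(X_s)\,ds\,\Big|\,(X_s)\right)=\exp\!\left(-\int_0^t q(X_s)\,ds\right),
\]
so by monotone convergence
\[
P(S=\infty)=\lim_{t\to\infty}\mathbb{E}\!\left[\exp\!\left(-\int_0^t q(X_s)\,ds\right)\right].
\]
It then suffices to show that this limit vanishes, and I see two natural routes.

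\textbf{Route A (spectral / Feynman--Kac).} The Feynman--Kac formula identifies the expectation above with $\int_\Omega v_{D,q}(t,x)\,dx$, where $v=v_{D,q}$ solves the parabolic Schr\"odinger equation $\partial_t v=\nabla\!\cdot\!(D\nabla v)-qv$ on $\Omega$ with Neumann boundary conditions and initial datum $\phi$. The operator $\mathcal L_{D,q}$ is self-adjoint on $L^2(\Omega)$ with quadratic form
\[
-\langle \mathcal L_{D,q}f,f\rangle_{L^2}=\int_\Omega D|\nabla f|^2\,dx+\int_\Omega q\,f^2\,dx\;\ge\;0,
\]
and this form is strictly positive on $L^2\setminus\{0\}$: vanishing forces $\nabla f\equiv 0$ (so $f$ is constant on the connected $\Omega$), and then $\int q f^2=0$ together with $q>0$ on the open set $\Omega_{00}$ forces $f\equiv 0$. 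Hence $-\mathcal L_{D,q}$ has a strictly positive spectral gap, so $\|v(t,\cdot)\|_{L^2}$ decays exponentially and Cauchy--Schwarz gives $\int_\Omega v(t,x)\,dx\to 0$, whence $P(S=\infty)=0$.

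\textbf{Route B (ergodic).} Alternatively, one can argue without invoking the Schr\"odinger semigroup. The reflected diffusion $(X_t)$ has generator $\nabla\!\cdot\!(D\nabla\,\cdot\,)$ with Neumann boundary conditions, self-adjoint on $L^2(\Omega,dx)$, so uniform measure on $\Omega$ is invariant. Standard irreducibility and strong Feller properties for uniformly elliptic reflected diffusions on smooth convex $\Omega$ then yield ergodicity, and Birkhoff's theorem gives
\[
\frac{1}{t}\int_0^t q(X_s)\,ds\longrightarrow\frac{1}{|\Omega|}\int_\Omega q(x)\,dx>0\quad\text{a.s.},
\]
the limit being strictly positive because $q\ge 0$ is continuous and strictly positive on the open $\Omega_{00}$. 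Thus $\int_0^t q(X_s)\,ds\to\infty$ a.s., and as $Y<\infty$ a.s.\ the threshold in \eqref{eq:binding-time} is eventually crossed.

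\textbf{Main obstacle.} Neither route is deep once the appropriate framework is fixed. Route A is the most direct given the paper's PDE viewpoint and relies only on the identification of the Feynman--Kac semigroup with $v_{D,q}$ and the elementary quadratic-form argument giving the strict positivity of $-\mathcal L_{D,q}$; the only minor point is to handle the initial condition $\phi$ (a probability or subprobability density under Condition \ref{Dphicond}) and to ensure $v(t,\cdot)\in L^2$ at some positive initial time, which follows from parabolic smoothing. Route B requires more classical diffusion-theoretic input (irreducibility/strong Feller of the reflected process) and is slightly heavier despite being more intuitive.
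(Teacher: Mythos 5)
Your Route A is essentially the paper's own proof: the paper likewise identifies $P(S>m)$ with $\int_\Omega v_{D,q}(m,x)\,dx$, invokes the spectral gap of $-\mathcal L_{D,q}$ (Lemma \ref{qgap}, proved by the same quadratic-form/Poincar\'e argument you sketch) to get exponential $L^2$-decay of $v_{D,q}(t,\cdot)$, and concludes via Cauchy--Schwarz, finishing with Borel--Cantelli where you use monotonicity of the events $\{S>t\}$ --- an immaterial difference. Your Route B (ergodicity of the reflected diffusion plus Birkhoff) is a genuinely different and intuitively appealing alternative, but as you note it needs extra probabilistic input (irreducibility and strong Feller properties of the reflected process) that the paper never develops, so Route A is the right choice here.
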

\begin{proof}
For any $m \in \mathbb N$, we use (\ref{earlier}) and the Cauchy--Schwarz
inequality to bound
\begin{align*}
P(S>m) &\le P\bigl(\tilde X_{m} \notin \{\dagger \}\bigr) = P(\tilde
X_{m} \in \Omega )=
\int _{\Omega }v_{D,q}(m,x) \,dx
\\
&\le \llvert \Omega \rrvert ^{1/2} \bigl\llVert v_{D,q}(m,
\cdot ) \bigr\rrVert _{L^{2}} \lesssim e^{-cm}
\end{align*}
from which we deduce that $\sum_{m >1}P(S>m)<\infty $, and hence by the
Borel--Cantelli lemma we have
\begin{equation*}
P\bigl(\{S>m\} ~\text{infinitely often}\bigr)=0,
\end{equation*}
so the result follows.
\end{proof}

In the killed process, $\tilde X_{S}=\dagger $ is ``not visible'' any longer
at time $S$, whereas we observe the actual binding location $X_{S}$ of
the molecule. This is a well-defined random variable since $S$ is a stopping
time for the process $(X_{t})$ (e.g., Sections~3.3 and 22 in
\cite{bass_stochastic_2011}). Since all particles will bind at some random
point in time, intuitively, the distribution of their binding locations
should equal the time-average $\int _{0}^{\infty }v(t,\cdot )\,dt $ of the
density of their positions, multiplied by the intensity $q$ at the relevant
position.

%s3.1 #&#
\subsection{Probability density of the binding location $\times $ time}
\label{sec3.1}

In this section, $P=P^{(X,Y)}$ refers to the cylindrical probability measure
describing the product law of $(X_{t})$ on the path space
$C([0,\infty ))$ (see, e.g., Sections~24 or 39 in
\cite{bass_stochastic_2011}) and of the exponential random variable
$Y$ on $\mathbb{R}^{+}$. Define a random variable $Z$ taking values in
$\hat \Omega := \Omega \times [0,\infty ]$ by
\begin{equation*}
Z= (X_{S}, S)
\end{equation*}
such that $Z$ jointly records the location and time of the binding event
and is a measurable map for the Borel $\sigma $-algebra
$\mathcal{B}_{\hat \Omega} := \mathcal{B}_{\Omega} \otimes
\mathcal{B}_{[0,\infty ]}$, where
$\mathcal B_{[0,\infty ]} = \{B \cup \{\infty \}: B \in \mathcal B_{
\mathbb{R}^{+}}\}$. The law $\Lambda _{Z}=\operatorname{Law}(Z)$ can be characterized
as follows on a generating family of rectangles of measurable sets.

%p3.2 #&#
\begin{proposition}
%%LEAP%%%\label{prop3.2}
\label{prop:density-rectangles}
Suppose $q$, $D$, $\phi $ satisfy Conditions \ref{qcond} and
\ref{Dphicond}. The probability measure $\Lambda _{Z}$ has a density function
such that for every rectangle
$A = \prod_{i=1}^{d} [a_{i}, b_{i}] \subset \Omega $,
$a_{i} <b_{i} \in \mathbb{R}$, and every interval
$(t_{1},t_{2}], 0<t_{1}< t_{2}<\infty $,
%
%e18 #&#
\begin{equation}
P\bigl(Z\in A \times (t_{1},t_{2}]\bigr) =
\int _{t_{1}}^{t_{2}}
\int _{A} \lambda (t, x) \,dx \,dt, \label{eq:proba-macro}
%%LEAP%%%\label{eq18}
\end{equation}
where
%
%e19 #&#
\begin{equation}
\label{Lamde} \lambda (t,x)=q(x)v_{D,q}(t,x),\quad t>0, x \in
\Omega , %%LEAP%%%\label{eq19}
\end{equation}
for $v_{D,q}$ the solution of the parabolic Schr\"{o}dinger equation (\ref{schrodtime})
with initial condition $\phi $.
\end{proposition}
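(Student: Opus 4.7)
The plan is to derive the joint density of $(X_S, S)$ via two independent ingredients: the conditional law of $S$ given the full trajectory $(X_t)_{t \ge 0}$, which involves only the additive functional $F(t) = \int_0^t q(X_s)\,ds$ and the independent exponential $Y$, and the Feynman--Kac representation of the subprobability density $v_{D,q}$ of the surviving particles. Since $Y$ is independent of $(X_t)$, we have for each trajectory
\[
P\bigl(S > t \,\big|\, (X_s)_{s\ge 0}\bigr) = P\bigl(Y > F(t)\bigr) = e^{-F(t)},
\]
and $t \mapsto F(t)$ is absolutely continuous with derivative $q(X_t)$ (by continuity of sample paths and boundedness of $q$), so differentiating yields the conditional density $q(X_t)e^{-F(t)}$ of $S$ on $[0,\infty)$. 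Hence for any bounded measurable $f$ on $\Omega$ and $g$ on $(0,\infty)$,
\[
\E\bigl[f(X_S)g(S)\1_{S<\infty}\bigr] = \E\left[\int_0^\infty f(X_t)g(t)\,q(X_t)\,e^{-F(t)}\,dt\right].
\]

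The second step is to exchange expectation and integral via Fubini (justified by nonnegativity after reducing to indicators of rectangles) and then identify the inner expectation. The Feynman--Kac formula states that for bounded $h$,
\[
\E\bigl[h(X_t)\,e^{-\int_0^t q(X_s)\,ds}\bigr] = \int_\Omega h(x)\,v_{D,q}(t,x)\,dx,
\]
where $v_{D,q}$ is the unique classical solution of the parabolic Schr\"odinger equation $\partial_t v = \nabla\cdot(D\nabla v) - qv$ with Neumann boundary data and initial density $\phi$. This is the standard connection between the killed diffusion semigroup and its generator $\mathcal L_{D,q}$, which under Conditions \ref{qcond} and \ref{Dphicond} is available from the reference given for the unkilled reflecting process (Tanaka/Lions) together with the additive functional construction outlined in \cite{WR00}. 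Applying this to $h(x) = f(x)q(x)$ and choosing $f = \1_A$, $g = \1_{(t_1,t_2]}$ yields
\[
P\bigl(Z \in A \times (t_1,t_2]\bigr) = \int_{t_1}^{t_2}\!\int_A q(x)\,v_{D,q}(t,x)\,dx\,dt,
\]
which is exactly (\ref{eq:proba-macro}) with $\lambda(t,x) = q(x)v_{D,q}(t,x)$.

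The main obstacle I anticipate is a clean justification of the conditional-density step: one must argue that, pathwise, the regular conditional law of $S$ given $(X_s)_{s \ge 0}$ has Lebesgue density $q(X_t)e^{-F(t)}$ on $[0,\infty)$, and that this density integrates to $P(S<\infty\mid(X_s))$ (which may be strictly less than $1$ pathwise, although Proposition \ref{prop:binding-time-finite} already guarantees it equals $1$ almost surely). This requires invoking the independence of $Y$ from the diffusion together with a change-of-variables argument through the nondecreasing map $F$; the cleanest route is to verify the formula first on rectangles $A \times (t_1,t_2]$ with $f,g$ indicators, where monotone/dominated convergence removes any integrability concerns, and to invoke the $\pi$-$\lambda$ theorem at the end to extend uniqueness of $\Lambda_Z$ on $\mathcal B_{\hat\Omega}$. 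Once this is in hand, the Feynman--Kac identification and Fubini are routine given the regularity of $q$, $D$, and $\phi$ assumed in Conditions \ref{qcond} and \ref{Dphicond}.
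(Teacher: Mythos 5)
Your proof is correct, but it takes a genuinely different route from the paper's. You work \emph{globally}: conditioning on the whole trajectory, you identify the regular conditional law of $S$ as having density $q(X_t)e^{-F(t)}$ (via the exponential survival function and a change of variables through the nondecreasing additive functional $F$), integrate out the path with Tonelli, and then invoke the identification $\E[h(X_t)e^{-F(t)}]=\int_\Omega h\, v_{D,q}(t,\cdot)$ of $v_{D,q}(t,\cdot)$ as the subprobability density of the killed process. The paper instead argues \emph{locally}: its Lemma \ref{lemma:infinitesimal} establishes the infinitesimal asymptotics $P(Z\in A\times(t,t+h])=h\int_A\lambda(t,x)\,dx+o(h)$ by squeezing the event between $B_h\cap A_h$ and $\cup_{\sigma\le h}(B_\sigma\cap A_\sigma)$ and Taylor-expanding $1-e^{-\int_t^{t+h}q(X_s)ds}$, then upgrades this to the integral identity by first proving absolute continuity of $t\mapsto P(S_A\le t)$ from the crude bound $h\|q\|_\infty$ and applying the Lebesgue differentiation theorem. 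Both arguments ultimately rest on the same external input (the law of $\tilde X_t$ on $\Omega$ has density $v_{D,q}(t,\cdot)$, cited from Rogers--Williams), but your version buys a stronger conclusion with less bookkeeping: it yields the density formula for arbitrary Borel $A$ and Borel time sets in one stroke, whereas the paper's local argument needs $A$ open (for the convergence $\1_{A_h}\to\1_{\{w(t)\in A\}}$) and a separate $\pi$-$\lambda$ extension step. The price is that the burden shifts to the pathwise conditional-density step, which you correctly identify as the delicate point; your sketch of it (monotone continuous $F$, so $\{S>t\}=\{F(t)\le Y\}$, hence conditional distribution function $1-e^{-F(t)}$ with derivative $q(X_t)e^{-F(t)}$ by continuity of $t\mapsto q(X_t)$, and the flat stretches of $F$ contributing nothing since the density vanishes there) is sound and would need only to be written out to be complete.
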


\begin{proof}
Note first that from the exponential decay estimate (\ref{earlier}) and
since $\|q\|_{\infty}<\infty $, we know that the density $\lambda $ is
Lebesgue integrable on $[0, \infty ) \times \Omega $; hence, the right-hand
side~of (\ref{eq:proba-macro}) is well-defined. We start with the following
key lemma.

%l3.3 #&#
\begin{lemma}
%%LEAP%%%\label{lem3.3}
\label{lemma:infinitesimal}
Recall the setting of Proposition~\ref{prop:density-rectangles}. For any
$A =\prod_{i=1}^{d} [a_{i}, b_{i}] \subset \Omega $ and any
$t \geq 0$,
%
%e20 #&#
\begin{equation}
\label{eq:asymptotic} P\bigl(Z \in A \times (t, t+h] \bigr) = h
\int _{A} \lambda (t, x)\,dx +o(h) \quad \text{as }h
\to 0. %%LEAP%%%\label{eq20}
\end{equation}
\end{lemma}
\begin{proof}
By the memory-less property of the exponential random variable $Y$, if
$\tilde X_{t}$ binds at time $S \in (t, t + h]$, it has not bound before
time $t$ and we can restart the clock and the particle will bind at the
first subsequent time $t +\sigma $ for which
$\int _{t}^{t+\sigma}q(X_{s})\,ds > Y$. We also know that if
$\tilde X_{s}$ binds in $A$ at time $t+\sigma $, the variables
$\tilde X_{s}$ and $X_{s}$ coincide for all $s <t+\sigma $ (since the process
$\tilde X_{s}$ cannot return from the cemetery $\{\dagger \}$), and by
continuity of sample paths of the diffusion,
\begin{equation*}
X_{t+\sigma} = \lim_{r \to t+\sigma} X_{r} = \lim
_{r \to t+\sigma} \tilde X_{r}
\end{equation*}
must hold almost surely in this case.

Thus, let us write the probability
%
%e21 #&#
\begin{align}
\begin{aligned}
&P\bigl(Z \in A \times (t,t+h]\bigr)
\\
&\quad = P \biggl(
\int _{t}^{t+\sigma} q(X_{s})\,ds > Y
\text{ and } \lim_{r \to t +\sigma}\tilde X_{r} \in A
\text{ for some } 0<\sigma \leq h \biggr)
\\
&\quad = P \biggl({\bigcup_{0<\sigma \leq h}B_{\sigma }\cap
A_{\sigma}}\biggr) \equiv p_{h},
\end{aligned}
 \label{eq:proba-lemma1}
%%LEAP%%%\label{eq21}
\end{align}
where for $y>0$, we have defined measurable subsets
\begin{equation*}
B_{\sigma }= \biggl\{ \omega ,y :
\int _{t}^{t+\sigma} q\bigl(\omega (s)\bigr)\,ds > y
\biggr\} \quad \text{and} \quad A_{\sigma}= \bigl\{\omega : \omega
(t+\sigma ) \in A \bigr\}\times \mathbb{R^{+}}.
\end{equation*}
Here, $\omega = \omega (s) \in \Omega $, $0<s \leq t+\sigma $ are points
in the path space of continuous trajectories of $\tilde X_{s}$ before binding,
extended by continuity to $s=t+\sigma $. We prove below that
%
%e22 #&#
\begin{equation}
\label{limsupinf} \limsup_{h \to 0} \frac{p_{h}}{h} \le
\int _{A} \lambda (t,x)\,dx \le \liminf
_{h \to 0} \frac{p_{h}}{h}, %%LEAP%%%\label{eq22}
\end{equation}
from which the lemma follows. In what follows, let us write
$P^{\Omega}$ for the law of the process $(\tilde X_{s})$ with paths in
$\Omega $, $P^{Y}$ for the law of $Y$, and recall that the marginal law
$\operatorname{Law}(\tilde X_{t})$ has Lebesgue density $v_{D,q}(t,\cdot )$ on
$\Omega $ for every $t$ fixed.

\boxed{\text{Lower bound}} We clearly have
$B_{h} \cap A_{h} \subset \bigcup_{0<\sigma \leq h} B_{\sigma} \cap A_{
\sigma}$, and hence
\begin{equation*}
p_{h} \ge P(B_{h} \cap A_{h})=E^{\Omega}
\bigl[1_{A_{h}}(\omega )P^{Y}(Y \in B_{h}|
\omega )\bigr].
\end{equation*}
Therefore, by boundedness and continuity of $q$ combined with the fundamental
theorem of calculus and the dominated convergence theorem, we have
\begin{align*}
p_{h} &\ge
\int {1}_{A_{h}}(\omega ) P^{Y} \biggl(
\int _{t}^{t+h} q\bigl( \omega (s)\bigr)\,ds > Y
\biggr) \,dP ^{\Omega}(\omega )
\\
&=
\int {1}_{A_{h}}(\omega ) \bigl(1-e^{-\int _{t}^{t+h}q(\omega (s))\,ds } \bigr)\,dP
^{\Omega}(\omega )
\\
&= h
\int {1}_{A_{h}}(\omega ) \frac{1}{h}
\int _{t}^{t+h} q\bigl(\omega (s)\bigr)\,ds \,dP
^{\Omega}(\omega ) + o(h)
\\
&=h
\int {1}_{A_{h}}(\omega )q\bigl(\omega (t)\bigr)\,dP
^{\Omega}(\omega ) + o(h).
\end{align*}
This implies
\begin{equation*}
\liminf_{h} \frac{p_{h}}{h} \ge \liminf
_{h}
\int {1}_{A_{h}}( \omega )q\bigl(\omega (t)\bigr)\,dP
^{\Omega}(\omega ) \ge
\int _{\liminf _{h} A_{h}}q\bigl( \omega (t)\bigr)\,dP ^{\Omega}(
\omega ),
\end{equation*}
where we have used Fatou's lemma in the second inequality. Since
$\omega $ is continuous, we have
\begin{equation*}
\liminf_{h} A_{h} = \Biggl\{\omega (t+h)
\in \prod_{i=1}^{d}[a_{i},
b_{i}]~ \text{ eventually} \Biggr\} \supset \bigl\{\omega
_{i}(t) \in (a_{i}, b_{i}) ~\forall i=1,
\dots , d \bigr\} \equiv A^{o}
\end{equation*}
so the last integral is lower bounded by
\begin{equation*}
\int _{A^{o}} q\bigl(\omega (t)\bigr) \,dP ^{\Omega}(
\omega ) =
\int _{\prod _{i=1}^{d}(a_{i},
b_{i})} q(z)v_{D,q}(t,z)\,dz =
\int _{A} q(z) v_{D,q}(t,z)\,dz
\end{equation*}
since the boundary of the rectangle $A$ has Lebesgue measure zero.

\boxed{\text{Upper bound}} Since $q(\dagger ) =0 \le q$, we can regard
$B_{\sigma }\subset B_{h}$, so that
$\bigcup_{0<\sigma \leq h} B_{\sigma} \cap A_{\sigma }\subset B_{h}
\cap \bigcup_{0<\sigma \leq h} A_{h}$, and note that
%
%e23 #&#
\begin{equation}
P^{Y} \biggl(
\int _{t}^{t+h} q\bigl(\omega (s)\bigr)\,ds > Y
\biggr) = h q\bigl(\omega (t)\bigr) + o(h) \label{eq:upper-bound} %%LEAP%%%\label{eq23}
\end{equation}
follows just as above. Hence, the probability $p_{h}$ in question is upper
bounded by
\begin{equation*}
h
\int 1_{\bigcup_{0\leq \sigma \leq h}A_{\sigma}}(\omega ) q\bigl(\omega (t)\bigr) \,dP
^{\Omega}(\omega ) + o(h).
\end{equation*}
Now letting $h=1/n$ for $n\in \mathbb{N}$, we see that
\begin{equation*}
\bigcup_{0\leq \sigma \leq 1/n} A_{\sigma} \downarrow
\bigcap_{n} \bigcup_{0\leq \sigma \leq 1/n}
A_{\sigma} = \limsup_{n} A_{n} =
\bigl\{ \omega (t+1/n) \in A \text{ for infinitely many $n$}\bigr\},
\end{equation*}
and since $\omega $ is continuous, we have
\begin{equation*}
\bigl\{\omega _{i}(t+1/n) \in [a_{i},
b_{i}] \ \forall i \text{ for infinitely many $n$}\bigr\} \subseteq \bigl\{\omega
_{i}(t)\in [a_{i}, b_{i}]~ \forall i\bigr
\} = \bar A.
\end{equation*}

Using that $q \ge 0$ and the monotone convergence theorem, we deduce from
what precedes that
\begin{equation*}
\limsup_{h} \frac{p_{h}}{h} \le
\int _{\limsup A_{n}}q\bigl(\omega (t)\bigr)\,dP ^{
\Omega}(
\omega ) \le
\int _{\bar A} q\bigl(\omega (t)\bigr)\,dP ^{\Omega}(
\omega ) =
\int _{A}q(z)v_{D,q}(t,z)\,dz ,
\end{equation*}
so that the proof of (\ref{limsupinf}), and hence of the lemma is complete.
\end{proof}

Now fix any $A = \prod_{i=1}^{d} [a_{i}, b_{i}] \subset \Omega $ and define
$S_{A}$ as the binding time in $A$, that is, $S_{A}=S$ if
$X_{S} \in A$ and $S_{A}=\infty $ otherwise. Write
$F_{S_{A}}(t)= P(S_{A} \le t)$, $t>0$, for its distribution function and
let $t$, $h$ be arbitrary. Then as in (\ref{eq:proba-lemma1}) and above (\ref{eq:upper-bound}),
using also $1-e^{-x} \le x$ for all $x$, we can estimate
\begin{align*}
F_{S_{A}}(t+h)-F_{S_{A}}(t)&= P\bigl(Z \in A \times (t, t+h]\bigr) \le
P(B_{h})
\\
&=
\int
\int {1}_{B_{h}}(\omega ,y)\,dP ^{Y}(y)\,dP
^{W}(\omega )
\\
&=
\int \bigl(1-e^{-\int _{t}^{t+h} q(\omega (s))\,ds }\bigr)\,dP ^{\Omega}(\omega ) \le
h \llVert q \rrVert _{\infty}.
\end{align*}

For any finite set of disjoint intervals
$(a_{1},b_{1}),\ldots, (a_{N},b_{N})$ lying in $[t_{1},t_{2}]$, we see that
\begin{equation*}
\sum_{j=1}^{N} (b_{j}-a_{j})<
\varepsilon / \llVert q \rrVert _{\infty }\quad \implies \quad \sum
_{j=1}^{N} \bigl\llvert F_{S_{A}}(b_{j})-F_{S_{A}}(a_{j})
\bigr\rrvert < \llVert q \rrVert _{\infty} \sum
_{j=1}^{N} (b_{j}-a_{j})
< \varepsilon ,
\end{equation*}
so the map $t \mapsto F_{S_{A}}(t)$ is absolutely continuous on
$[t_{1},t_{2}]$. By Theorem~3.35 in \cite{folland_real_1999},
$F_{S_{A}}$ therefore admits an integrable density $f_{S_{A}}$ with respect
to the Lebesgue measure such that
$P(Z \in A \times \tau )=\int _{\tau }f_{S_{A}}(s)\,ds $ for every subinterval
$\tau \subseteq [t_{1},t_{2}]$ and $A$ fixed. Moreover, by Lebesgue's differentiation
theorem and Lemma~\ref{lemma:infinitesimal},
\begin{align*}
f_{S_{A}}(t)-
\int _{A}\lambda (t,x)\,dx &= \lim_{h \to 0}
\frac{1}{h} \biggl(
\int _{t}^{t+h} f_{S_{A}}(s) \,ds - h
\int _{A} \lambda (t,x)\,dx \biggr)
\\
&= \lim_{h \to 0} \frac{1}{h} (P(Z \in
\biggl((t,t+h],A) - h
\int _{A}\lambda (t,x)\,dx \biggr) = 0
\end{align*}
for almost every $t \in (t_{1}, t_{2}]$, so that $f_{S_{A}}$ and
$\int _{A}\lambda (\cdot ,x)\,dx $ coincide almost everywhere. The proposition
is proved.
\end{proof}

We can extend Proposition~\ref{prop:density-rectangles} to the
$\sigma $-field
$\mathcal{B}_{\hat \Omega} = \mathcal{B}_{\Omega} \otimes \mathcal{B}_{[0,
\infty ]}$. Denote by $\Lambda $ the finite measure induced by the application
\begin{align*}
(B,\tau ) & \mapsto \Lambda (B \times \tau )=
\int _{B}
\int _{\tau } \lambda (t, x)\,dt\, dx ,\quad (B, \tau ) \in
\mathcal B_{\Omega }\otimes \mathcal B_{\mathbb{R}^{+}}.
\end{align*}
By Proposition~\ref{prop:density-rectangles}, the law $\Lambda _{Z}$ of
$Z$ and $\Lambda $ coincide on the rectangles $A \times \tau $ where
$\tau =(t_{1},t_{2}]$ and
$A = \prod_{i=1}^{d} [a_{i}, b_{i}] \subset \Omega $ or
$A=\emptyset $ and, therefore, by standard uniqueness arguments (e.g.,
Theorem~1.14 in \cite{folland_real_1999}) they coincide on the Borel
$\sigma $-field
$\mathcal B_{\Omega }\otimes \mathcal B_{\mathbb{R}^{+}}$ generated by
them. We can extend $\Lambda $ to $\mathcal B_{\hat \Omega}$ by setting
$\Lambda (B \times \tau \cup \{\infty \}) = \Lambda (B \times \tau )$,
in particular $\Lambda (\Omega \times \{\infty \})=0$, which is consistent
with
\begin{align*}
\Lambda \bigl(\Omega \times \mathbb{R}^{+}\bigr) &= \lim
_{T \to \infty}\Lambda \bigl( \Omega \times [0,T)\bigr) = \lim
_{T \to \infty}\Lambda _{Z}\bigl(\Omega \times [0,T
)\bigr)
\\
& = \Lambda _{Z}\bigl(\Omega \times \mathbb{R}^{+}
\bigr) = P(S< \infty )=1
\end{align*}
in view of Proposition~\ref{prop:binding-time-finite} and countable additivity
of measures. In particular, $\lambda $ is a probability density on
$\Omega \times [0,\infty )$, that is,
%
%e24 #&#
\begin{equation}
\label{probden}
\int _{\Omega} q(x)
\int _{\mathbb{R}_{+}} v_{D,q}(t,x)\,dt \,dx = \Lambda
\bigl(\Omega \times \mathbb{R}^{+}\bigr)=1. %%LEAP%%%\label{eq24}
\end{equation}

If we record only the binding position (without reporting the time of binding),
then the probability to bind in a particular measurable subset $A$ of
$\Omega $, at some time, is given by
%
%e25 #&#
\begin{align}
\label{bindform}
\begin{aligned}
P( X_{S} \in A, S<\infty ) &= \Lambda
_{Z}\bigl(A \times \mathbb{R}^{+}\bigr) =
\int _{A}
\int _{0}^{\infty }q(x) v_{D,q}(t,x) \,dt
\,dx
\\
&=
\int _{A} q(x) u_{D,q}(x) \,dx \equiv \Lambda
_{D}(A),
\end{aligned}
 %%LEAP%%%\label{eq25}
\end{align}
where the second identity follows from (\ref{avsch}), and where
$u_{D,q}$ is the solution to the steady- state Schr\"odinger equation
%
%e26 #&#
\begin{align}
\label{ellip}
\begin{aligned}
\nabla \cdot (D \nabla u) - qu &= -\phi \quad \text{on }
\Omega, %%LEAP%%%\label{eq26}
\\
\frac{\partial u}{\partial \nu} &=0 \quad \text{ on }\partial \Omega ,
\end{aligned}
\end{align}
with $\phi $ modeling the initial condition of the system.

%s3.2 #&#
\subsection{Construction of the Poisson point process}
%%LEAP%%%\label{sec3.2}
\label{conppp}

The measure $\Lambda _{D}$ from (\ref{bindform}) will serve as the intensity
measure for the point process we construct now. We take independently diffusing
molecules $\tilde X_{s}^{i}$, $i=1, \dots $, and observe their terminal positions
$X_{S}^{i}$. We then record the count of binding locations through the
number $N(A)$ of molecules having bound in a measurable subset $A$ of
$\Omega $. Concretely, we take independent and identically distributed
random variables $T_{1}, T_{2}, \ldots$\,,
\begin{equation*}
T_{i}(A)= %
\begin{cases} 1 &
\text{if molecule $i$ has bound in $A$},
\\
0 & \text{otherwise} \end{cases} %
\end{equation*}
such that $P(T_{i}(A)=1)=\Lambda _{D}(A)$, and obtain a mixed binomial
process
\begin{equation*}
N(A) = \sum_{i=1}^{n_{\mathrm{mol}}}
T_{i}(A),\quad A \text{ measurable},
\end{equation*}
where $n_{\mathrm{mol}} \in \mathbb{N}$ is the total number of diffusing molecules.
If we assume that the binding positions in disjoint subsets of
$\Omega $ are statistically independent, then such a ``completely random''
point process is necessarily a Poisson process; see Theorem~6.12 in
\cite{last_lectures_2017}. The preceding mixed binomial process is a Poisson
process precisely when $n_{\mathrm{mol}}$ is drawn from a Poisson distribution (see
Propositions 3.5 and 3.8 in \cite{last_lectures_2017}). To model an effective
sample size of $n$, we therefore assume $n_{\mathrm{mol}} \sim \operatorname{Poisson}(n)$ for some
$n \in \mathbb{N}$. One then shows as in Section~3.2 of
\cite{last_lectures_2017} that $N$ is indeed a Poisson point process with
intensity $n\lambda _{D} = n qu_{D,q}$ on $\Omega $. The expected number
$\mathbb{E}[N(A)]$ of bound molecules in every Borel set $A$ of
$\Omega $ is precisely given by Campbell's formula (see Proposition~2.7
in \cite{last_lectures_2017}):
%
%e27 #&#
\begin{equation}
\mathbb{E}\bigl[N(A)\bigr] =
\int _{A} n q(x) u_{D,q}(x)\,dx .
\label{eq27}
\end{equation}

%s4 #&#
\section{Inference on the diffusion parameter from killed diffusion}
\label{sec4}

We formulate appropriate hypotheses on $D$, $q$, $\phi $ that allow to combine
the results from the previous sections. Since boundary reflection dominates
the diffusive dynamics of (\ref{eq:diffuso}) near $\partial \Omega $, it
is convenient to consider $q=0$ near $\partial \Omega $ so that binding
events occur only in the interior of $\Omega $. Along with that, we assume
that $D$ is known to be constant ($=1/2$, say) near
$\partial \Omega $ and bounded away from zero (by $1/4$, for notational
convenience). We also assume that the initial condition is strictly positive
and satisfies a basic smoothness and boundary compatibility condition.

%c4.1 #&#
\begin{condition}
\label{qcond}
The potential $q$ lies in $H^{\eta}(\Omega ), \eta >1+d/2$, with
$\|q\|_{\infty }\le Q$ for some $Q>0$. Suppose further that
$\inf_{x \in \Omega _{00}}q(x) \ge q_{\mathrm{min}}>0$ on some open set
$\Omega _{00} \subset \Omega $ such that
$\operatorname{dist}(\Omega _{00}, \partial \Omega )>0$ and that $q=0$ outside a compact
subset $\Omega _{0} \subset \Omega $ that contains
$\Omega _{00} \subset \Omega _{0}$.
\end{condition}
%
%c4.2 #&#
\begin{condition}
\label{Dphicond}
The diffusivity $D$ lies in $H^{\alpha}(\Omega )$ for some
$\alpha >2+d/2$, equals $1/2$ outside of $\Omega _{00}$ and satisfies
$\inf_{x \in \Omega}D(x)>1/4$. Moreover, $\phi $ is a probability density
that satisfies $\inf_{x \in \Omega} \phi (x) \ge \phi _{\mathrm{min}}$ for some
$\phi _{\mathrm{min}}>0$ and $\phi $ either (i) lies in $H^{b}(\Omega )$ for some
$b$, $d/2 < b \le \min (\alpha +1, \eta +2)$, and is constant outside of
$\Omega _{00}$ or (ii) lies in $H^{2}_{\nu}(\Omega )$ from (\ref{h2nu})
below, in which case we set $b=2$, $d<4$, in what follows.
\end{condition}

The hypotheses ensure that $\phi $ belongs to the spectrally defined Sobolev
space $\tilde H_{D,q}^{b}$ from (\ref{sobspec}) below for
\textit{any} such $D$, $q$ and $b$ in view of Proposition~\ref{evergreen} (for
appropriate $\alpha $, $\eta )$. Lemma~\ref{regpde}, (\ref{fkacellip}) and
the Sobolev imbedding then imply that the unique solution $u_{D,q}$ to
(\ref{ellip}) is non-negative and lies in
$\tilde H^{b+2} \subset H^{b+2} \subset C^{2}$. Since also
$0 \le q \in H^{\eta }\subset C^{1}$, this ensures that the density
%
%e28 #&#
\begin{align}
\label{lD} \frac{d\Lambda _{D}}{dx} &= \lambda _{D} = q
u_{D,q} %%LEAP%%%\label{eq28}
\end{align}
is Lipschitz and nonnegative on $\Omega $. We can therefore consider the
Poisson point process $N$ over $\Omega $ constructed in Section~\ref{conppp} with intensity measure $n \Lambda _{D}$ where
$\Lambda _{D}$ is the (probability) measure with Lebesgue density
$\lambda _{D}$. From what precedes $N$ will generate samples lying in
$\Omega _{0}$ almost surely. Considering that there are on average
$n$ particles diffusing, and in light of (\ref{densrat}), we then consider
%
%e29 #&#
\begin{equation}
\label{knd} K = K_{n,d} \simeq n^{d/(2+d)} %%LEAP%%%\label{eq29}
\end{equation}
many bins $B_{i}$ that partition $\Omega _{00}$ in a way such that each
bin has Euclidean diameter bounded by a multiple of $K^{-1/d}$. We measure
the count of the killed particles in each bin:
%
%e30 #&#
\begin{equation}
\label{eq:observations} Y_{i} := N(B_{i}), \quad 1\leq i \leq
K, %%LEAP%%%\label{eq30}
\end{equation}
which for these choices of $K, \Lambda =\Lambda _{D}$ and
$\mathbb O = \Omega _{00}$ fits into the general measurement setting (\ref{pcount}).
We denote the resulting law $P_{\Lambda _{D}}$ of the observation vector
by $P_{D}$ in this section.

%s4.1 #&#
\subsection{Construction of the prior and posterior}
\label{sec4.1}

Our goal is to make inference on $D$, and hence we construct a prior directly
for this parameter, which induces a pushforward prior for the intensity
measure $\Lambda _{D}$ via (\ref{lD}). We start with a Gaussian random
field $(w(x):x \in \Omega )$ that induces a probability measure
$\operatorname{Law}(w)$ on the (separable) Banach space
$H^{a}(\Omega ) \subset C^{1}(\Omega )$ with reproducing kernel Hilbert
space (RKHS) equal to $H^{\alpha}(\Omega )$, where
\begin{equation*}
\alpha >2+d, \alpha -d/2>a>2+d/2.
\end{equation*}
For the existence of such a process, see Theorem B.1.3 in
\cite{nickl_bayesian_2023}. We then define the rescaled random function
%
%e31 #&#
\begin{equation}
\label{prior0} W(x) = \frac{1}{n^{d/(4\alpha +2d)}}\zeta (x) w(x),\quad x \in \Omega ,
%%LEAP%%%\label{eq31}
\end{equation}
where $\zeta $ is a smooth cutoff function that is zero outside of
$\Omega _{0}$ and identically $\zeta =1$ on $\Omega _{00}$. The prior probability
measure for $D$ is then the law $\Pi =\Pi _{n}=\operatorname{Law}(D)$ on $H^{a}$ of the
random function given by
%
%e32 #&#
\begin{equation}
\label{prior} D(x)=\frac{1}{4}+\frac{1}{4}e^{W(x)},
\quad x \in \Omega , %%LEAP%%%\label{eq32}
\end{equation}
which equals $D=1/2$ near the boundary $\partial \Omega $, specifically
on $\Omega \setminus \Omega _{0}$. For given $q$, this induces a prior
probability measure $\Pi _{\Lambda}$ supported in $H^{a}$ for the intensity
measures $\Lambda _{D}$ from (\ref{lD}) simply via the image measure theorem
(and since the map $D \mapsto u_{D,q}$ is appropriately measurable in view
of Lemmata \ref{fwdregu} and \ref{regpde} below). This prior and then also
the resulting posterior measures concentrate precisely on the nonlinear
manifold of intensity functions that arise from observing a killed diffusion
process for some $D$ (and $q$), which will be crucial later to solve the
nonlinear inverse problem via Theorem~\ref{yetagain}. Note that the posterior
measure from (\ref{post}) based on data (\ref{eq:observations}) is then
given by
%
%e33 #&#
\begin{equation}
\label{postD} \Pi (A|Y_{1}, \dots , Y_{K}) =
\frac{\int _{A} p^{K}_{\Lambda _{D},n} (Y_{1}, \dots , Y_{K}) \,d\Pi
(D)}{\int _{H^{a}} p^{K}_{\Lambda _{D}, n} (Y_{1}, \dots , Y_{K}) \,d\Pi  (D)},
\quad A \text{ a Borel set in } H^{a}(\Omega ).
%%LEAP%%%\label{eq33}
\end{equation}
Despite the lack of convexity of the negative log likelihood function
$D \mapsto -\log p_{\Lambda _{D}, n}^{K}$, following ideas in
\cite{stuart_inverse_2010}, approximate computation of this posterior measure
is possible by MCMC methods, with each MCMC iterate requiring the numerical
evaluation of $p_{\Lambda _{D}, n}^{K}$, and hence of the elliptic PDE
(\ref{ellip}). For instance, one can use the pCN, ULA or MALA algorithm;
see \cite{cotter_mcmc_2013} and also Section~1.2.4 in
\cite{nickl_bayesian_2023} as well as references therein. Computational
guarantees for such algorithms can be given following ideas in
\cite{nickl_polynomial-time_2022}; see also Chapter~5 in
\cite{nickl_bayesian_2023}. In concrete applications, one may wish to model
a further drift $\nabla U(X_{t})\,dt $ in (\ref{eq:diffuso}), which corresponds
to subtracting a further vector field $\nabla U \cdot \nabla u $ on the
left-hand side~in (\ref{ellip}), and hence changes little in the implementation
if $U$ is known or determined independently; see also the introduction
of \cite{nickl_consistent_2023} for further discussion of this aspect.

%s4.2 #&#
\subsection{Numerical illustration}
%%LEAP%%%\label{sec4.2}
\label{subsec:numerics}

To illustrate the theory, we perform numerical experiments where
$\Omega $ is the unit disk ($d=2$) represented by a triangular mesh with
$9985$ nodes. We take $n=10^{7}$ as the expected number of molecules, and
$K=2911$ the number of bins. We select parameters $D$, $q$ such that
$q$ equals $100$ on a small circular ``focus,'' and $30$ outside (with
a smooth sigmoid transition in between), and such that in the link function
(\ref{prior}) we replace the additive constant $1/4$ by $15$. These values
were chosen for convenience to amplify the effects of both diffusion and
binding in the graphical representation of our results---a more comprehensive
numerical investigation of the algorithm proposed here will be undertaken
in future work.

%f3 #&#
\begin{figure}[b]
     \includegraphics[width=\linewidth]{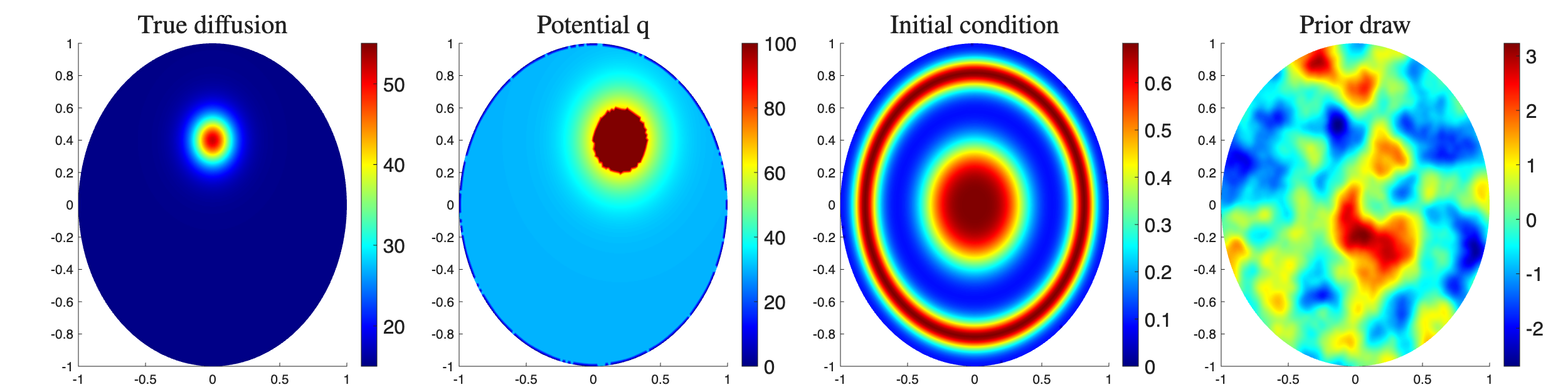} 
     \caption{Problem setting.}
\label{fig:setting}
\end{figure}
%%LEAP%%%\label{fig3}

%s4.2.1 #&#
\subsubsection{Synthetic data}
\label{sec4.2.1}

We take the initial condition $\phi $ and ground truth $D_{0}$ such that
$\phi (x,y) \propto \exp (\cos (3\pi (x^{2} + y^{2})))$,
$W_{0}(x,y)=5e^{-10x^{2} -10(y-0.4)^{2}}$ for $(x,y)$ in the interior of
the mesh and zero elsewhere. The corresponding solution
$u_{D_{0},q}$, is computed on the mesh nodes via FEM (Finite Elements Method),
and can be extended to the whole domain via standard interpolation, as
needed to compute the integrals
$\Lambda _{D_{0}}(B_{i}) =\int _{B_{i}}qu_{D_{0},q}$, $i=1, \dots , K$. We
then draw the random observation count $Y_{i}=N(B_{i})$ for each bin as
$\operatorname{Poisson}(n\Lambda _{D_{0}}(B_{i}))$. For the sake of speed, the mass matrix
provided by the FEM is used to compute the integrals over the bins.

%s4.2.2 #&#
\subsubsection{MCMC}
\label{sec4.2.2}

Following a standard implementation from \cite{giordano2023bayesian}, we
run the preconditioned Crank--Nicolson (pCN) scheme for $15{,}000$ steps,
on the posterior density (\ref{postD}) arising from a Mat\'{e}rn Gaussian
process prior of smoothness parameter $\alpha =2.5$ and length scale
$\ell = 0.15$. At each step, the function $W$ is represented by the
$9{,}985$-dimensional vector of its values on the mesh nodes, and we again
use a standard interpolation algorithm to define it on the whole domain
when solving the PDE. We initialize the chain uniformly at 0. After running
the chain, we thin it by retaining every 200th sample after removal
of the burn-in (initial $5{,}000$ iterations), which we then use to compute
the posterior mean $\bar W$ giving rise to
$\bar D = 15+\frac{1}{4}e^{\bar W}$.
Figure \ref{fig:setting} displays the setting (true diffusivity $D_0$, potential $q$, and initial condition $\phi$ along with a prior draw) for our experiments. Figure \ref{fig:intensities} shows the resulting posterior mean $\bar D$, with the PDE solutions associated to $\bar D$ and $D_0$, respectively. Figure \ref{fig:counts} compares the observed bin intensities with the reconstruction obtained from the Bayesian estimator.

%f4 #&#
\begin{figure}
     \includegraphics[width=\linewidth]{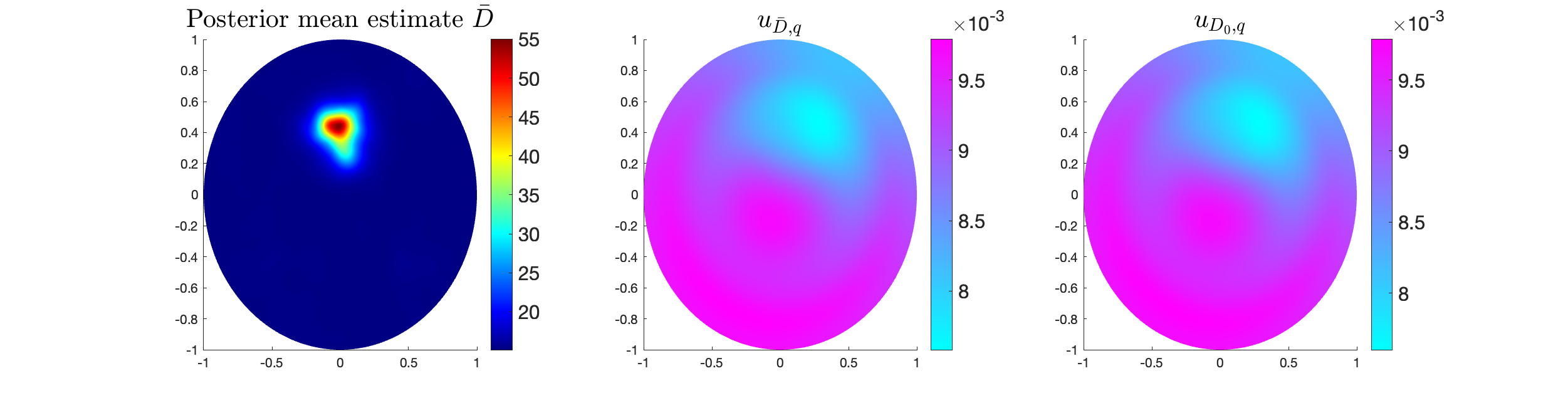}
     \caption{Posterior mean estimate $\bar D$ for the diffusivity function (left),
along with the corresponding PDE solution $u_{\bar D, q}$ (center), and
the true solution $u_{D_{0},q}$ (right) for comparison.}
\label{fig:intensities}
\end{figure}
%%LEAP%%%\label{fig4}
%
%f5 #&#
\begin{figure}[b]
     \includegraphics[width=\linewidth]{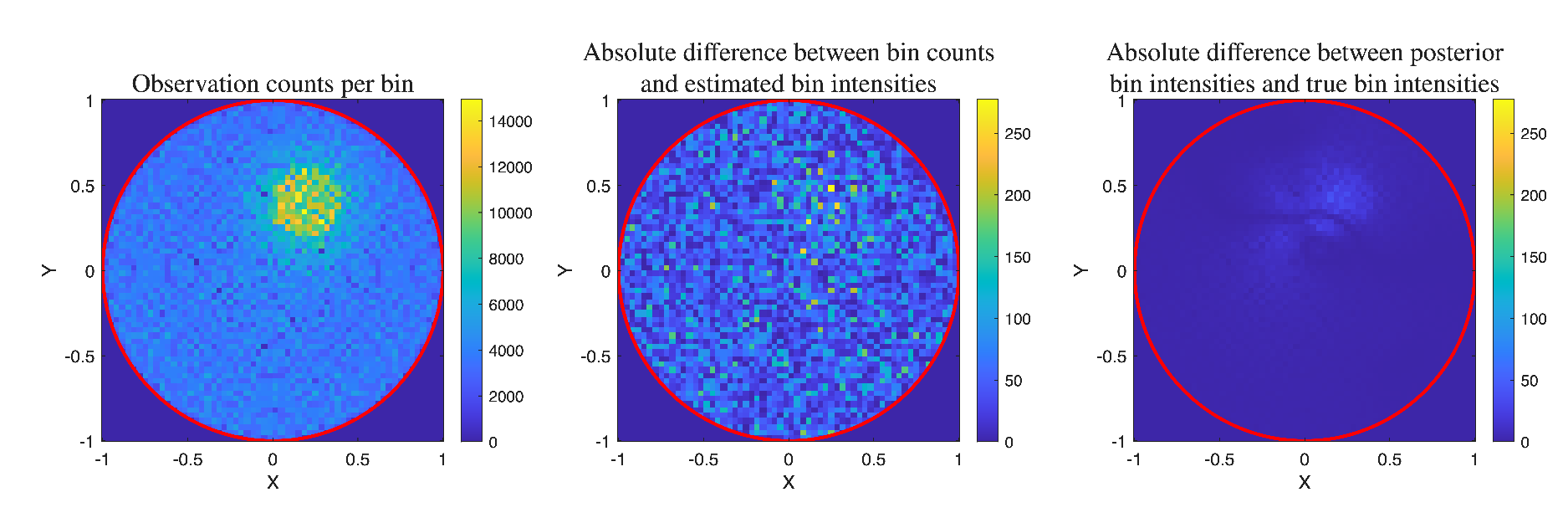} 
     \caption{The second and third displays compare respectively the ``fit''
of the histogram bin count $N(B_{i})$ and the estimated posterior intensities
$n\Lambda _{\bar D}(B_{i})$ with the ground truth
$n\Lambda _{D_{0}}(B_{i})$. The numerically improved recovery also of the
intensity $\Lambda _{D_{0}}$ by the Bayesian method when compared to the
naive bin count is clearly visible in these images.}
\label{fig:counts}
\end{figure}
%%LEAP%%%\label{fig5}

%s4.3 #&#
\subsection{Posterior consistency theorems}
\label{sec4.3}

We now adapt techniques from Bayesian nonlinear inverse problems
\cite{monard_consistent_2021,nickl_bayesian_2023,nickl_consistent_2023}
to the present situation to derive some posterior consistency results.
While the probabilistic structure of the prior for $D$ is well understood
via Gaussian process theory, the prior for the intensity measures
$\Lambda _{D}$ is more complex as it involves the nonlinear PDE solution
map $D \mapsto u_{D,q}$. To verify the hypotheses of Theorem~\ref{contract}, we will require the following lemmas.

%l4.3 #&#
\begin{lemma}%
%%LEAP%%%\label{lem4.3}
\label{fwdregu}
Suppose $D$, $q$, $\phi $ satisfy Conditions \ref{qcond} and
\ref{Dphicond}. Let the intensity measure $\Lambda _{D}$ be given as in
(\ref{lD}) with Lebesgue density $\lambda _{D}$. Then we have
\begin{equation*}
\inf_{x \in \Omega _{00}}\lambda _{D}(x) \ge
\frac{q_{\mathrm{min}}\phi _{\mathrm{min}}}{Q}>0.
\end{equation*}
Next, consider two diffusivities $D_{1}, D_{2} \ge 1/4$ s.t.~$D_{1}=D_{2}$
on $\Omega \setminus \Omega _{0}$ and $\|D_{i}\|_{H^{a}} \le B$ for some
$B>0$ and $a>2+d/2$. Then there exist finite positive constants
$c_{B,q_{\mathrm{min}}, Q}$, $c_{B,Q}$ such that for every $a'>1+d/2$,
\begin{align*}
\llVert \lambda _{D_{1}} - \lambda _{D_{2}} \rrVert
_{L^{2}} &\leq c_{B,q_{\mathrm{min}}, Q} \llVert D_{1}-D_{2}
\rrVert _{L^{2}},%
\\
\llVert \lambda _{D_{1}} - \lambda _{D_{2}} \rrVert
_{\infty} &\le c_{B,Q} \llVert D_{1}-D_{2}
\rrVert _{H^{a'}}.%
\end{align*}
\end{lemma}

\begin{proof}
We use PDE techniques and notation from Section~\ref{schrodspec}. The first
claim is Lemma~\ref{techlem}(a) with $\varphi =\phi $. Next, suppose
$u_{D_{i}}, i=1,2$, solve the elliptic equation (\ref{ellip}) for two distinct
diffusivities $D_{i}$ s.t.~$\|D_{i}\|_{H^{a}} \le B$. Set
$\bar u = u_{D_{1}} - u_{D_{2}}$ and notice that
$\|u_{D_{i}}\|_{C^{2}} \lesssim \|u_{D_{i}}\|_{\tilde H^{b+2}} \le c_{B}<
\infty $ in view of Lemma~\ref{regpde} for appropriate $b>d/2$, Proposition~\ref{evergreen}, the Sobolev imbedding and our hypotheses which ensure
$\phi \in \tilde H^{b}_{D_{i},q}$. Then we have
\begin{equation*}
\mathcal L_{D_{1}, q}(\bar u) = -\phi - \mathcal L_{D_{2}, q}
u_{D_{2}} + \nabla \cdot (D_{2}-D_{1}) \nabla
u_{D_{2}}=\nabla \cdot (D_{2}-D_{1}) \nabla
u_{D_{2}}
\end{equation*}
so that $\bar u$ equals $\mathcal L_{D_{1},q}^{-1}[\varphi ]$ with
$\varphi = \nabla \cdot (D_{2}-D_{1}) \nabla u_{D_{2}}$, which lies in
$\tilde H_{D_{1}, q}^{b}, b>d/2$, by what precedes, Proposition~\ref{evergreen}, and since $D_{1}-D_{2}$ vanishes near
$\partial \Omega $. We thus obtain from the second inequality in part (a)
of Lemma~\ref{techlem} that
%
%e34 #&#
\begin{equation}
\llVert u_{D_{1}} - u_{D_{2}} \rrVert _{\infty} \le
\bigl\llVert \nabla \cdot \bigl((D_{1}-D_{2}) \nabla
u_{D_{2}}\bigr) \bigr\rrVert _{\infty }\lesssim \llVert
D_{1}-D_{2} \rrVert _{C^{1}} \le \llVert
D_{1}-D_{2} \rrVert _{H^{a'}}, \label{eq34}
\end{equation}
which implies the desired $L^{\infty}$-estimate since
$\lambda _{D} = q u_{D,q}$ for all $D$. For the $L^{2}-L^{2}$-estimate,
we notice that $D_{1}-D_{2}$ has compact support in $\Omega $, and so we
see from the divergence theorem, again Lemma~\ref{techlem} combined with
the previous bound for $\|u_{D_{2}}\|_{C^{1}}$ and the Cauchy--Schwarz
inequality that
\begin{align*}
\llVert u_{D_{1}} - u_{D_{2}} \rrVert _{L^{2}} &
\lesssim \bigl\|\nabla \cdot (D_{1}-D_{2}) \nabla
u_{D_{2}}\bigr\|_{\tilde H^{-1}}
\\
&= \sup_{ \llVert \psi  \rrVert _{\tilde H^{1}}\le 1} \biggl\llvert
\int _{\Omega }(D_{1}-D_{2}) \nabla \psi
\cdot \nabla u_{D_{2}} \biggr\rrvert \lesssim \llVert
D_{1}-D_{2} \rrVert _{L^{2}},
\end{align*}
completing the proof.
\end{proof}

%l4.4 #&#

\begin{lemma}%
%%LEAP%%%\label{lem4.4}
\label{gpsmall}
Consider the Gaussian process $W$ from (\ref{prior0}) with
$\alpha >2+d$ and let $W_{0} \in H^{\alpha}$ be such that $W_{0}=0$ outside
of $\Omega _{00}$. Let
$\bar \varepsilon _{n} = n^{-\alpha /(2\alpha +d)}$. Then for every fixed
$\epsilon >0$, $a<\alpha -d/2$, there exists a positive constant
$C=C(\epsilon , \alpha , a, \Omega , \zeta , \Omega _{00})>0$ such that
for all $n$,
%
%e35 #&#
\begin{equation}
\label{gpest} \Pr \bigl(W: \llVert W-W_{0} \rrVert
_{L^{2}} < \bar \varepsilon _{n}, \llVert
W-W_{0} \rrVert _{H^{a}}< \epsilon \bigr) \ge
e^{-Cn \bar \varepsilon _{n}^{2}}. %%LEAP%%%\label{eq35}
\end{equation}
\end{lemma}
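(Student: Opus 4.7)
The plan is to reduce the decentered small-ball estimate to a centered one via a Cameron--Martin shift, exploiting the fact that the rescaling factor $\tau_n := n^{-d/(4\alpha+2d)}$ satisfies $\tau_n^{-2} = n^{d/(2\alpha+d)} = n\bar\varepsilon_n^2$, so that the shift cost and the centered small-ball cost both land at the same order $n\bar\varepsilon_n^2$.

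First I locate $W_0$ inside the RKHS of $W = \tau_n\zeta w$. Since $w$ has RKHS $H^\alpha(\Omega)$, the bounded linear map $h\mapsto\tau_n\zeta h$ realises $\mathrm{RKHS}(W)$ as a quotient of $H^\alpha$. By hypothesis $\zeta\equiv 1$ on $\Omega_{00}$ and $W_0$ vanishes outside $\Omega_{00}$, so $h_0:=W_0/\tau_n$ lies in $H^\alpha$ and satisfies $\tau_n\zeta h_0=W_0$ on all of $\Omega$; hence $W_0\in\mathrm{RKHS}(W)$ with
\[
 \|W_0\|^2_{\mathrm{RKHS}(W)}\;\le\;\|W_0\|^2_{H^\alpha}/\tau_n^2\;=\;\|W_0\|^2_{H^\alpha}\,n\bar\varepsilon_n^2.
\]
The standard Cameron--Martin shift inequality for symmetric convex sets (proved via Jensen on the Cameron--Martin density together with the vanishing $\mathbb{E}[L_{W_0}\mid W\in C]=0$ forced by symmetry of $C:=\{f:\|f\|_{L^2}<\bar\varepsilon_n,\,\|f\|_{H^a}<\epsilon\}$) then yields
\[
 \Pr(\|W-W_0\|_{L^2}<\bar\varepsilon_n,\,\|W-W_0\|_{H^a}<\epsilon)\;\ge\;e^{-\|W_0\|^2_{H^\alpha}n\bar\varepsilon_n^2/2}\,\Pr(W\in C).
\]

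Second I lower bound the centered product small ball $\Pr(W\in C)$. For the $L^2$ factor, the Kolmogorov--Birman--Solomyak entropy bound $\log N(\delta,B_1^{H^\alpha},\|\cdot\|_{L^2})\lesssim\delta^{-d/\alpha}$ combined with the Kuelbs--Li entropy/small-ball duality (using $d/\alpha<2$) gives $-\log\Pr(\|w\|_{L^2}<\delta)\lesssim\delta^{-2d/(2\alpha-d)}$, and applying this with $\delta=\bar\varepsilon_n/(\tau_n\|\zeta\|_\infty)$ together with the exponent identity $(\bar\varepsilon_n/\tau_n)^{-2d/(2\alpha-d)}=n\bar\varepsilon_n^2$ produces $\Pr(\|W\|_{L^2}<\bar\varepsilon_n)\ge e^{-C_1 n\bar\varepsilon_n^2}$. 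For the $H^a$ factor, the hypothesis $a<\alpha-d/2$ makes the embedding $H^\alpha\hookrightarrow H^a$ Hilbert--Schmidt so $\mathbb{E}\|w\|_{H^a}^2<\infty$; the multiplier bound $\|W\|_{H^a}\le\tau_n\|\zeta\|_{C^a}\|w\|_{H^a}$ and $\tau_n\to 0$ then give $\Pr(\|W\|_{H^a}<\epsilon)\to 1$. Royen's Gaussian correlation inequality applied to the two symmetric convex sets whose intersection is $C$ yields $\Pr(W\in C)\ge\Pr(\|W\|_{L^2}<\bar\varepsilon_n)\Pr(\|W\|_{H^a}<\epsilon)\ge\tfrac12 e^{-C_1 n\bar\varepsilon_n^2}$ for all $n$ large. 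Multiplying the two displayed inequalities gives the claim with $C=C_1+\tfrac12\|W_0\|^2_{H^\alpha}+\log 2$.

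The main obstacle is the exponent bookkeeping that forces the rescaling exponent $d/(4\alpha+2d)$, the $L^2$ small-ball exponent $2d/(2\alpha-d)$, and the target rate exponent $\alpha/(2\alpha+d)$ to reconcile at the single order $n^{d/(2\alpha+d)}$; a minor subtlety is the simultaneous control of the shrinking $L^2$ ball with the fixed-radius $H^a$ ball, which I handle by invoking the Gaussian correlation inequality (though Borell's tail estimate $\Pr(\|W\|_{H^a}\ge\epsilon)\lesssim e^{-c\epsilon^2 n\bar\varepsilon_n^2}$ could also be used directly when $\epsilon$ is not too small).
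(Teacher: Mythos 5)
Your proposal is correct and follows essentially the same route as the paper's proof: a Cameron--Martin decentering step (with the RKHS norm of the shift $W_0$ of order $\sqrt{n}\bar\varepsilon_n\|W_0\|_{H^\alpha}$, exploiting $\zeta\equiv 1$ on $\mathrm{supp}(W_0)$), the Gaussian correlation inequality to split the two balls, a constant lower bound for the fixed-radius $H^a$ ball, and the entropy/small-ball duality for the centered $L^2$ ball at radius $\bar\varepsilon_n/\tau_n$, with the same exponent bookkeeping. The only cosmetic differences are that you write the shift as $W_0/\tau_n$ rather than $W_0/(\tau_n\zeta)$ and replace the appeal to Anderson's lemma by a second-moment argument; both are equivalent here.
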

\begin{proof}
Note that the rescaling factor in (\ref{prior0}) equals
$1/\sqrt{n} \bar \varepsilon _{n}$. Since $\zeta ^{-1}$ is smooth on the
support of $W_{0}$, using also Exercise~2.6.5 in
\cite{gine_mathematical_2016} and (\ref{multi}), the RKHS-norm of the ``shift''
$W_{0}$ can be bounded as
\begin{equation*}
\sqrt n \bar\varepsilon _{n} \llVert W_{0}/\zeta
\rrVert _{H^{\alpha}} \lesssim \sqrt n \bar \varepsilon _{n}
\llVert W_{0} \rrVert _{H^{\alpha}}.
\end{equation*}
Therefore, using Corollary~2.6.18 in \cite{gine_mathematical_2016}, the
Gaussian correlation inequality (e.g., Theorem~B.1.2 in
\cite{nickl_bayesian_2023}), and again (\ref{multi}), we can lower bound
the probability in question by
\begin{equation*}
e^{-n\bar \varepsilon _{n}^{2}  \llVert W_{0} \rrVert _{H^{\alpha}}^{2}}\Pr\bigl(w: \llVert w \rrVert _{L^{2}} < \sqrt n \bar
\varepsilon ^{2}_{n} / \llVert \zeta \rrVert
_{\infty}\bigr) \Pr\bigl( \llVert w \rrVert _{H^{a}}< \epsilon
\sqrt n \bar \varepsilon _{n} / \llVert \zeta \rrVert
_{C^{a}}\bigr).
\end{equation*}
Since the law $\operatorname{Law}(w)$ is supported on the separable Banach space
$H^{a}$, the second probability is bounded from below by a positive constant
independent of $n$ in view of Anderson's lemma (Theorem~2.4.5 in
\cite{gine_mathematical_2016}) and for
$\sqrt{n}\bar \varepsilon _{n} \ge 1$ (in fact the probability converges
to one as $n \to \infty $). For the first probability, the required lower
bound follows by the same computation as the one on page 34 in
\cite{nickl_bayesian_2023}, so that the result follows.
\end{proof}

Collecting these facts, we can now show the following.
%
%t4.5 #&#
\begin{theorem}%
%%LEAP%%%\label{thm4.5}
\label{fwdthm}
Suppose that $q$, $D_{0}$, $\phi $ satisfy Conditions \ref{qcond} and
\ref{Dphicond} with $\alpha >2+d$, respectively. The prior
$\Pi _{\Lambda}$ for the intensities $\Lambda _{D}$ from (\ref{lD}) as
constructed after (\ref{prior}) satisfies conditions (\ref{KLball}) and
(\ref{excess}) with $\Lambda _{D_{0}}$, $\Lambda _{\mathrm{max}}=1$,
$\varepsilon _{n} = \bar k n^{-\alpha /(2\alpha +d)}$ for some
$\bar k>0$, and with regularisation set
%
%e36 #&#
\begin{equation}
\label{regset} \Theta _{n} = \bigl\{D: \llVert D \rrVert
_{H^{a}} \le B\bigr\}, %%LEAP%%%\label{eq36}
\end{equation}
for any $B$ large enough and every $a<\alpha -d/2$. As a consequence we
obtain, for all $M$ large enough, some $k>0$ and $K=K_{n}$ as in (\ref{knd})
that
%
%e37 #&#
\begin{equation}
\label{fwdcontract} \Pi \bigl(D \in \Theta _{n}: \llVert \lambda
_{D} - \lambda _{D_{0}} \rrVert _{L^{1}(
\Omega _{00})} \le M
n^{-1/(2+d)}|Y_{1}, \dots , Y_{K} \bigr) = 1
-O_{P_{D_{0}}}\bigl(e^{-k
n \varepsilon _{n}^{2}}\bigr) %%LEAP%%%\label{eq37}
\end{equation}
\end{theorem}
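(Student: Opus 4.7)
My plan is to verify the two hypotheses (\ref{KLball}) and (\ref{excess}) of Theorem \ref{contract} for the pushforward prior $\Pi_\Lambda$, and then promote the resulting $\ell_1$-contraction to the $L^1$-density statement via the unlabelled density-rate theorem following Theorem \ref{contract}. The translation from the $D$-scale (on which the prior is defined) to the $\lambda_D$-scale (on which the information distances live) is carried out using Lemma \ref{fwdregu}, while the Gaussian small-ball machinery is provided by Lemma \ref{gpsmall}.

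\textbf{Step 1: small-ball condition.} Lemma \ref{fwdregu} gives $\lambda_{D_0}\ge c_0:=q_{\min}\phi_{\min}/Q>0$ on $\Omega_{00}$, hence $\Lambda_{D_0}(B_i)\ge c_0 |B_i|$ for each bin. Combined with the Cauchy--Schwarz estimate $|\Lambda_D(B_i)-\Lambda_{D_0}(B_i)|^2\le |B_i|\int_{B_i}|\lambda_D-\lambda_{D_0}|^2$, this yields
\[
\mathcal D_{2,K}^2(\Lambda_D,\Lambda_{D_0})\le c_0^{-1}\|\lambda_D-\lambda_{D_0}\|_{L^2(\Omega_{00})}^2,\qquad \mathcal D_{\infty,K}(\Lambda_D,\Lambda_{D_0})\le c_0^{-1}\|\lambda_D-\lambda_{D_0}\|_\infty.
\]
Setting $W_0:=\log(4D_0-1)$ (which lies in $H^\alpha$ and vanishes outside $\Omega_{00}$ by Condition \ref{Dphicond}) and applying Lemma \ref{gpsmall} with some small $\epsilon>0$ gives a measurable subset of probability at least $e^{-Cn\bar\varepsilon_n^2}$ on which $\|W-W_0\|_{L^2}\le \bar\varepsilon_n$ and $\|W-W_0\|_{H^a}\le \epsilon$. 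Since $a>d/2$, $H^a$ is a Banach algebra by (\ref{multi}), so on this event the link map $W\mapsto \zeta e^W$ is Lipschitz in both the $L^2$ and $H^a$ norms, giving $\|D-D_0\|_{L^2}\lesssim\bar\varepsilon_n$ and $\|D-D_0\|_{H^a}\lesssim\epsilon$. Inserting these into the two stability bounds of Lemma \ref{fwdregu} (with $a'=a>1+d/2$) yields $\mathcal D_{2,K}^2\lesssim\bar\varepsilon_n^2$ and $\mathcal D_{\infty,K}\lesssim\epsilon$; choosing $\epsilon$ small enough to force the latter below $1/2$, and $\bar k$ large enough that $\varepsilon_n^2=\bar k^2\bar\varepsilon_n^2$ absorbs the implicit constant, establishes (\ref{KLball}).

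\textbf{Step 2: excess mass.} Because each $\Lambda_D$ is a probability measure on $\Omega$ by (\ref{probden}), one has $\Lambda_D(\Omega_{00})\le 1=\Lambda_{\max}$ automatically, so the set $\mathcal R$ carries full prior mass and it only remains to bound $\Pi(\|D\|_{H^a}>B)$. Iterating (\ref{multi}) term by term on the Taylor series of $\exp$ in the algebra $H^a$ gives $\|e^W\|_{H^a}\le c\exp(c\|W\|_{H^a})$, whence $\{\|D\|_{H^a}>B\}\subset\{\|W\|_{H^a}>M\}$ with $M=M(B)\to\infty$ as $B\to\infty$. Borell's isoperimetric inequality applied to the centred Gaussian measure $Law(W)$ on the separable Banach space $H^a$, whose dual-variance parameter scales as $\sigma_n^2\asymp n^{-d/(2\alpha+d)}\asymp\bar\varepsilon_n^2$ because of the rescaling in (\ref{prior0}), then gives for $M$ past the median
\[
\Pi(\|W\|_{H^a}>M)\le \exp\!\bigl(-cM^2/\sigma_n^2\bigr)=\exp\!\bigl(-cM^2 n\bar\varepsilon_n^2\bigr)\le \exp\!\bigl(-(C+4)n\varepsilon_n^2\bigr)
\]
as soon as $M$, and hence $B$, is chosen large enough relative to $\bar k$ and $C$, verifying (\ref{excess}).

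\textbf{Step 3: $L^1$-density upgrade.} The rate-compatibility assumptions of Theorem \ref{contract}, namely $K_n/n\to 0$, $\sqrt n\,\varepsilon_n\to\infty$ and $\varepsilon_n\le\sqrt{K_n/n}$, all follow from $\alpha>2+d$, so the theorem applies and gives $\ell_1$-contraction at rate $\sqrt{K_n/n}=n^{-1/(2+d)}$ with the stated exponential control on the exceptional probability. To upgrade to $L^1(\Omega_{00})$-contraction I invoke the unlabelled density theorem following Theorem \ref{contract}: on $\Theta_n$, Lemma \ref{regpde} together with Proposition \ref{evergreen} bounds $\|u_{D,q}\|_{C^2}$ uniformly, so by (\ref{multi}) the density $\lambda_D=qu_{D,q}$ is uniformly Lipschitz on $\Omega_{00}$, which verifies (\ref{Lipbd}). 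The density theorem then delivers the announced rate $\sqrt{K_n/n}+K_n^{-1/d}\asymp n^{-1/(2+d)}$. The most delicate point of the argument is the scaling bookkeeping in Step 2, where the Gaussian tail exponent induced by the rescaling factor $n^{-d/(4\alpha+2d)}$ in (\ref{prior0}) must dominate $(C+4)n\varepsilon_n^2$; this works precisely because $n\varepsilon_n^2\asymp n\bar\varepsilon_n^2\asymp n^{d/(2\alpha+d)}$, so a single constant $B$, independent of $n$, suffices.
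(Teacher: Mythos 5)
Your proposal is correct and follows essentially the same route as the paper: the small-ball condition via Lemma \ref{fwdregu} combined with Lemma \ref{gpsmall} (your Cauchy--Schwarz step is the paper's Jensen step), the automatic bound $\Lambda_D(\Omega_{00})\le 1$ from (\ref{probden}), and the $L^1$ upgrade via the uniform $C^1$/Lipschitz bound on $\lambda_D=qu_{D,q}$ over $\Theta_n$ from Lemma \ref{regpde}. The only difference is that you spell out the Borell--Fernique tail argument for (\ref{excess}), which the paper delegates to p.~33 of \cite{nickl_bayesian_2023}; your scaling bookkeeping there is accurate.
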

\begin{proof}
To the ground truth $D_{0}$ corresponds the potential
$W_{0} = \log (4 D_{0}-1)$, which lies in $H^{\alpha}$ (by the chain rule
and since $D_{0}>1/4$ does), and which equals $W_{0}=\log 1=0$ outside
of $\Omega _{00}$, hence satisfies the hypotheses of Lemma~\ref{gpsmall}. To verify (\ref{KLball}), we work on the event in (\ref{gpest}),
so that in particular $\|W\|_{H^{a}} +\|W_{0}\|_{H^{a}}$ is bounded by
a fixed constant, which also bounds the $\|\cdot \|_{\infty}$-norms by
the Sobolev imbedding with $a>d/2$. By Lemma~\ref{fwdregu} and (\ref{lD}),
the true intensity function $\lambda _{D_{0}}$ is uniformly lower bounded
on $\Omega _{00}$, and hence on any $B_{i}$, by
$\Lambda _{\mathrm{min}}=q_{\mathrm{min}}\phi _{\mathrm{min}}/Q>0$. We then have from Jensen's inequality
\begin{align*}
\sum_{i=1}^{K} \frac{ \llvert \Lambda _{D}(B_{i}) - \Lambda _{D_{0}}(B_{i}) \rrvert ^{2}}{\Lambda _{D_{0}}(B_{i})}
&\leq \Lambda _{\mathrm{min}}^{-1} \sum
_{i=1}^{K} \frac{1}{ \llvert B_{i} \rrvert } \biggl(
\int _{B_{i}}(\lambda _{D} - \lambda
_{D_{0}}) (x)\,dx \biggr)^{2}
\\
&= \Lambda _{\mathrm{min}}^{-1} \sum
_{i=1}^{K} \biggl(
\int _{B_{i}}( \lambda _{D} - \lambda
_{D_{0}}) \frac{dx}{ \llvert B_{i} \rrvert } \biggr)^{2} \llvert
B_{i} \rrvert
\\
&\leq \Lambda _{\mathrm{min}}^{-1} \sum
_{i=1}^{K}
\int _{B_{i}}( \lambda _{D} - \lambda
_{D_{0}})^{2} \,dx
\\
&= \Lambda _{\mathrm{min}}^{-1} \llVert \lambda
_{D} - \lambda _{D_{0}} \rrVert _{L^{2}(
\Omega _{00})}^{2}
\leq C_{0} \llVert D-D_{0} \rrVert ^{2}_{L^{2}}
\le C_{1} \llVert W-W_{0} \rrVert ^{2}_{L^{2}},
\end{align*}
using also Lemma~\ref{fwdregu}, and for a fixed positive constant
$C_{1}>0$. Similarly, from Lemma~\ref{fwdregu},
\begin{align*}
\frac{ \llvert \Lambda _{D}(B_{i}) - \Lambda _{D_{0}}(B_{i}) \rrvert }{\Lambda _{D_{0}}(B_{i})} &\leq \Lambda _{\mathrm{min}}^{-1}
\frac{1}{ \llvert B_{i} \rrvert } \biggl\llvert
\int _{B_{i}}( \lambda _{D} - \lambda
_{D_{0}}) (x)\,dx \biggr\rrvert \le \Lambda _{\mathrm{min}}^{-1}
\llVert \lambda _{D} - \lambda _{D_{0}} \rrVert
_{\infty}
\\
&\leq c \llVert D-D_{0} \rrVert _{H^{a'}} \le
c' \llVert W-W_{0} \rrVert _{H^{a'}}
\end{align*}
for some $c'>0$. The condition (\ref{KLball}) then follows from Lemma~\ref{gpsmall} with $\varepsilon _{n}$ a constant multiple of
$\bar \varepsilon _{n}$ as well as $a'=a$ and $\epsilon >0$ small enough.
The condition (\ref{excess}) is verified for the rescaled prior and our
$\Theta _{n}$ just as on page 33 in \cite{nickl_bayesian_2023}. We also
note the uniform upper bound on
$\Lambda (\Omega _{00})\le 1=\Lambda _{\mathrm{max}}$ in view (\ref{probden}), (\ref{bindform}).
Therefore, noting also that our choices of $K_{n}$, $\varepsilon _{n}$ are
as required, we can invoke Theorem~\ref{contract} to obtain the contraction
rate $O(\sqrt{K/n})$ for the
$\|\Lambda _{D} - \Lambda _{D_{0}}\|_{\ell _{1}}$ distance over the
$K$ bins $B_{i}$ partitioning $\Omega _{00} = \mathbb O$. Using again Lemma~\ref{regpde}, Conditions \ref{qcond}, \ref{Dphicond} and the Sobolev imbedding
imply that $\lambda _{D} = qu_{D}$ is uniformly bounded in
$\tilde H^{2+b} \subset C^{1}(\Omega )$ for
$D \in \Theta _{n} \cup \{D_{0}\}$. This implies that these densities admit
a uniform Lipschitz constant $B$, and so the final conclusion for the
$L^{1}$-norms follows from (\ref{densrat}).
\end{proof}

The last step is to take advantage of the stability estimate Theorem~\ref{yetagain} to show that the posterior inference for $D$ is also valid,
proving Theorem~\ref{showoff}. To focus on the main statistical ideas we
show this for ``small potentials'' $q$ and $d \le 3$; see Section~\ref{idstab} for discussion.

%t4.6 #&#
\begin{theorem}
%%LEAP%%%\label{thm4.6}
\label{thm:inference-diffusivity}
In the setting of Theorem~\ref{fwdthm}, assume further that $\phi $ satisfies
the identifiability Condition~\ref{ident}, that $d \le 3$ and that
$\|q\|_{H^{2}}<\delta $ for $\delta $ small enough as in Theorem~\ref{yetagain}. Then if $K=K_{n}$ is as in (\ref{knd}), we have
%
%e38 #&#
\begin{equation}
\label{invcontract} \Pi \bigl(D: \llVert D - D_{0} \rrVert
_{L^{2}} \le M n^{-\beta}|Y_{1}, \dots ,
Y_{K} \bigr) = 1 -O_{P_{D_{0}}}\bigl(e^{-k n \varepsilon _{n}^{2}}\bigr)
%%LEAP%%%\label{eq38}
\end{equation}
for some $k>0$ and
$\beta = \frac{1}{2+d} \frac{b+2-d/2}{b+2}\frac{b}{b+2}$, where $b$ is
as in Condition~\ref{Dphicond}. Moreover, if
$\bar{D}= (1+e^{\bar W})/4$ for posterior mean
$\bar W =E^{\Pi}[W|Y_{1}, \dots , Y_{K}]$, then we also have
\begin{equation*}
\llVert \bar{D} - D_{0} \rrVert _{L^{2}} =
O_{P_{D_{0}}} \bigl(n^{-\beta}\bigr).
\end{equation*}
\end{theorem}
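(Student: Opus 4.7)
The plan is to chain three ingredients together: the forward $L^1$-contraction for the intensity densities from Theorem \ref{fwdthm}; the interpolation inequality (\ref{interpol2}) to lift this to an $L^2$-bound; and the stability estimate Theorem \ref{yetagain} to transfer the $L^2$-bound on $\lambda_D$ to one on $D$. A Jensen plus tail-truncation step then handles the posterior mean.

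First I would condition on the high-posterior-probability event $\mathcal{E}_n = \{D \in \Theta_n: \|\lambda_D - \lambda_{D_0}\|_{L^1(\Omega_{00})} \le M n^{-1/(2+d)}\}$ provided by (\ref{fwdcontract}). On $\Theta_n$, Lemma \ref{regpde} combined with the multiplication inequality (\ref{multi}) and Conditions \ref{qcond}, \ref{Dphicond} yields a uniform bound $\|u_{D,q} - u_{D_0,q}\|_{H^{b+2}(\Omega)} \le C$; since $q$ is bounded and Lipschitz, $\lambda_D - \lambda_{D_0} = q(u_{D,q} - u_{D_0,q})$ inherits a uniform $W^{b+2,1}$-bound (if needed the interpolation can alternatively be applied to $u_{D,q} - u_{D_0,q}$ directly on $\Omega_{00}$, using $q \ge q_{min}$ there to equate $L^1$-norms up to constants). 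Applying (\ref{interpol2}) with $a = b+2 > d/2$ (using $b \ge 2$ and $d \le 3$) and $\theta = (b+2-d/2)/(b+2)$ gives, on $\mathcal{E}_n$,
$$\|\lambda_D - \lambda_{D_0}\|_{L^2(\Omega_{00})} \lesssim \|\lambda_D - \lambda_{D_0}\|_{W^{b+2,1}(\Omega_{00})}^{1-\theta} \|\lambda_D - \lambda_{D_0}\|_{L^1(\Omega_{00})}^{\theta} \lesssim n^{-\theta/(2+d)} = n^{-\beta}.$$
Theorem \ref{yetagain}, whose hypotheses are exactly those imposed in the statement (Condition \ref{ident}, $d \le 3$, $\|q\|_{H^2}<\delta$), then supplies the Lipschitz stability $\|D - D_0\|_{L^2} \lesssim \|\lambda_D - \lambda_{D_0}\|_{L^2(\Omega_{00})}$ uniformly for $D \in \Theta_n$, which combined with the preceding display yields the posterior contraction statement (\ref{invcontract}).

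For the posterior mean, I would apply Jensen's inequality to the linear quantity $\bar W - W_0 = E^{\Pi}[W - W_0 \,|\, Y_1, \dots, Y_K]$, obtaining $\|\bar W - W_0\|_{L^2} \le E^{\Pi}[\|W - W_0\|_{L^2} \,|\, Y_1, \dots, Y_K]$, and split the right-hand integral according to three events: (i) $D \in \Theta_n$ with $\|D - D_0\|_{L^2} \le M n^{-\beta}$; (ii) $D \in \Theta_n$ with $\|D - D_0\|_{L^2} > M n^{-\beta}$; and (iii) $D \notin \Theta_n$. Piece (i) contributes at most $C n^{-\beta}$ because the link function $W \mapsto (1+e^W)/4$ is bi-Lipschitz on the uniform $L^\infty$-ball inherited from $\Theta_n$ (via the Sobolev embedding $H^a \hookrightarrow L^\infty$), so that $\|W - W_0\|_{L^2} \lesssim \|D - D_0\|_{L^2}$ on that event. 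Piece (ii) is bounded by this same uniform diameter times the exponentially small posterior probability from (\ref{invcontract}). Piece (iii) is controlled using Gaussian moment bounds for the rescaled prior (\ref{prior0}) together with the excess-mass estimate (\ref{excess}), following the standard argument in Chapter 2 of \cite{nickl_bayesian_2023}. Transferring back from $W$ to $D$ via the Lipschitz link function yields $\|\bar D - D_0\|_{L^2} = O_{P_{D_0}}(n^{-\beta})$.

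The main obstacle is the uniform $W^{b+2,1}$-regularity of the difference $\lambda_D - \lambda_{D_0}$ on $\Theta_n$ required for the interpolation: Condition \ref{qcond} only supplies $q \in H^\eta$ with $\eta > 1 + d/2$, so the product $qu_{D,q}$ a priori inherits only the smaller of the two Sobolev exponents. This is why the interpolation must effectively be carried out at the level of $u_{D,q}$, exploiting $q \ge q_{min} > 0$ on $\Omega_{00}$, and the restriction $d \le 3$ together with $b \ge 2$ ensures $b+2 > d/2$ as required by (\ref{interpol2}). The remaining work — the three-piece split for the posterior mean, and the verification of the Gaussian tail estimate in piece (iii) — is routine but requires care to balance the prior rescaling against the desired rate $n^{-\beta}$.
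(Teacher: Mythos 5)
Your overall architecture matches the paper's (chain the $L^1$ contraction of Theorem \ref{fwdthm} with an interpolation inequality and the stability estimate, then handle the posterior mean by Jensen plus a three-way split), and your exponent $\beta$ is the right one. However, there is a genuine gap at the step where you invoke Theorem \ref{yetagain}: that theorem bounds $\|D-D_0\|_{L^2(\Omega)}$ by $\|u_{D,q}-u_{D_0,q}\|_{H^2(\Omega_{00})}$, i.e.\ by the \emph{$H^2$ norm of the difference of PDE solutions}, not by $\|\lambda_D-\lambda_{D_0}\|_{L^2(\Omega_{00})}$ as you claim. The $H^2$ norm is essential there: the last display of the proof of Theorem \ref{yetagain} controls $\nabla\cdot\bigl((D_1-D_2)\nabla u_{D_1,q}\bigr)$ by a term involving $\nabla\cdot\bigl(D_2\nabla(u_{D_1,q}-u_{D_2,q})\bigr)$, which requires two derivatives of the solution difference. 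An $L^2$ bound on $q(u_{D,q}-u_{D_0,q})$ gives no control of those second derivatives, so your interpolation lands on the wrong quantity and the chain breaks.

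The fix is the route the paper takes (and which you half-identify in your closing remark but do not actually follow): pass from $\|\lambda_D-\lambda_{D_0}\|_{L^1(\Omega_{00})}$ to $\|u_{D,q}-u_{D_0,q}\|_{L^1(\Omega_{00})}$ using $q\ge q_{min}$ on $\Omega_{00}$, note that $u_{D,q}-u_{D_0,q}$ is uniformly bounded in $\tilde H^{b+2}\subset H^{b+2}\subset W^{b+2,1}$ for $D\in\Theta_n$ by Lemma \ref{regpde}, and then interpolate the \emph{$H^2(\Omega_{00})$ norm of $u_{D,q}-u_{D_0,q}$} between $W^{b+2,1}$ and $L^1$ via (\ref{interpol2}) to obtain $\|u_{D,q}-u_{D_0,q}\|_{H^2(\Omega_{00})}\lesssim\|u_{D,q}-u_{D_0,q}\|_{L^1(\Omega_{00})}^{c(b,d)}$ with $c(b,d)=(b+2-d/2)/(b+2)$; only then can Theorem \ref{yetagain} be applied. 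This also removes your worry about $q\in H^\eta$ limiting the regularity of the product $qu_{D,q}$, since the interpolation never needs to be performed on $\lambda_D$ at all. Your treatment of the posterior mean (Jensen on $\bar W-W_0$, three-event split, Gaussian tail for the complement of $\Theta_n$) is the standard argument the paper defers to Theorem 2.3.2 of \cite{nickl_bayesian_2023} and is fine in outline.
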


\begin{proof}
For $D \in \Theta _{n}$ from (\ref{regset}) and
$D_{0} \in H^{\alpha}$, we know from Lemma~\ref{regpde} and our hypothesis
on $\phi $ that $u_{D,q}-u_{D_{0},q}$ is bounded in
$\tilde H^{b+2} \subset H^{b+2} \subset W^{b+2,1}$. We then combine (\ref{fwdcontract})
with Theorem~\ref{yetagain} for $D_{1}=D_{0}$, $D_{2}=D$ and the interpolation
inequalities (\ref{interpol}), (\ref{interpol2}) to obtain
\begin{align*}
\llVert W-W_{0} \rrVert _{L^{2}} &\lesssim \llVert
 D-D_{0} \llVert _{L^{2}} \lesssim
\llVert u_{D,q} - u_{D_{0},q} \rrVert _{H^{2}(\Omega _{00})}
\\
&\lesssim \llVert u_{D,q} - u_{D_{0},q} \llVert _{L^{1}(\Omega _{00})}^{c(b,d)} \lesssim \llVert \lambda
_{D}-\lambda _{D_{0}} \rrVert _{L^{1}(\Omega _{00})}^{c(b,d)}
\\
&\lesssim n^{-\frac{1}{2+d} \frac{b+2-d/2}{b+2}\frac{b}{b+2}}
\end{align*}
from which the first claim of the theorem follows. The convergence of the
posterior mean can then be proved as in Theorem~2.3.2 in
\cite{nickl_bayesian_2023}, and is left to the reader.
\end{proof}

%r4.7 #&#
\begin{remark}
\label{rem4.7}
We have not tried to optimize the exponent $\beta $ in the rate
$n^{-\beta}$ obtained for recovery of $D$ in the previous theorem. While
some improvement is conceivably possible by replacing Sobolev imbeddings
and energy estimates in the proof by Schauder-type regularity results,
this is beyond the scope of the present paper. We emphasize that the key
analytical result Theorem~\ref{yetagain} below is based on a ``Darcy''-type
stability estimate, which likely precludes the possibility to obtain minimax
optimal convergence rates in this problem.
\end{remark}

%s5 #&#
\section{Analytical results from PDE theory}
\label{sec5}

%s5.1 #&#
\subsection{Schr\"odinger operators and their spectrum}
%%LEAP%%%\label{sec5.1}
\label{schrodspec}

We now recall some analytical results for Schr\"odinger-type equations
that will allow to derive some estimates for solutions of the PDE (\ref{ellip})
that are essential in our proofs. Define the Schr\"odinger operator
%
%e39 #&#
\begin{equation}
\label{eq:schrodinger-operator} \mathcal L_{D,q}v \equiv \nabla \cdot (D \nabla v)
- qv %%LEAP%%%\label{eq39}
\end{equation}
acting on $v$ belonging to the usual Sobolev space
%
%e40 #&#
\begin{equation}
\label{h2nu} H^{2}_{\nu }= \biggl\{ u \in
H^{2}(\Omega ): \frac{\partial u}{\partial \nu} =0 \text{ on } \partial
\Omega \biggr\} %%LEAP%%%\label{eq40}
\end{equation}
equipped with Neumann boundary conditions. In this subsection, we generally
assume Conditions \ref{qcond} and \ref{Dphicond} even though weaker assumptions
would be possible, too. Adapting standard arguments from spectral theory
for the Laplace equation with Neumann boundary conditions in Chapter~5.7
in \cite{taylor_partial_2010} to the presence of $D$, $q$ (with
$D=1/2$, $q=0$ near $\partial \Omega $), one shows that the Schr\"odinger
operator $\mathcal L_{D,q}$ has $L^{2}(\Omega )$-orthonormal eigenpairs
\begin{equation*}
(e_{j,D,q}, -\lambda _{j,D,q}) \in H^{2}_{\nu}(
\Omega ) \times [0, \infty ),\quad  j \ge 1,
\end{equation*}
with eigenvalues satisfying
%
%e41 #&#
\begin{equation}
\label{weyl} \lambda _{j,D,q} \le \lambda _{j+1, D,q}
\simeq j^{2/d}, %%LEAP%%%\label{eq41}
\end{equation}
where the constants implicit in $\simeq $ depend on upper bounds for
$\|q\|_{\infty}$, $\|D\|_{\infty}$ and on
$D_{\min}=\inf_{x\in \Omega}D(x)>0$. We now prove the following key Poincar\'e-type
inequality.
%
%l5.1 #&#
\begin{lemma}%
%%LEAP%%%\label{lem5.1}
\label{qgap}
Suppose $q \ge q_{\mathrm{min}}>0$ on some subset $\Omega _{00}$ of $\Omega $ of
positive Lebesgue measure $|\Omega _{00}|>0$. Then we have the spectral
gap inequality
\begin{equation*}
\lambda _{1,D,q} \ge c\bigl(\Omega , \llvert \Omega _{00}
\rrvert , q_{\mathrm{min}}\bigr) >0.
\end{equation*}
\end{lemma}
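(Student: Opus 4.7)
The plan is to characterise $\lambda_{1,D,q}$ variationally and then reduce the problem to a weighted Poincaré-type inequality that can be handled via the standard Poincaré--Wirtinger inequality on $\Omega$. Indeed, since $\mathcal L_{D,q}$ is symmetric with Neumann boundary conditions, standard min-max spectral theory (as developed in Ch.~5.7 of \cite{taylor_partial_2010}) gives
\begin{equation*}
\lambda_{1,D,q} \;=\; \inf_{v \in H^1(\Omega),\, \|v\|_{L^2}=1}\;\int_\Omega \bigl(D|\nabla v|^2 + q v^2\bigr)\,dx.
\end{equation*}
Using $D \ge 1/4$ from Condition~\ref{Dphicond} and $q \ge q_{min}\mathbb{1}_{\Omega_{00}}$ from the hypothesis, it suffices to prove the purely geometric statement
\begin{equation}\label{eq:reduced}
\inf_{\|v\|_{L^2(\Omega)}=1}\;\Bigl[\tfrac{1}{4}\|\nabla v\|_{L^2(\Omega)}^2 + q_{min}\|v\|_{L^2(\Omega_{00})}^2\Bigr] \;\ge\; c\bigl(\Omega, |\Omega_{00}|, q_{min}\bigr) \;>\; 0.
\end{equation}

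To prove \eqref{eq:reduced}, I would introduce the average $\bar v = |\Omega|^{-1}\int_\Omega v\,dx$ and invoke the Poincaré--Wirtinger inequality on the convex domain $\Omega$:
\begin{equation*}
\|v-\bar v\|_{L^2(\Omega)}^2 \;\le\; C_\Omega\,\|\nabla v\|_{L^2(\Omega)}^2.
\end{equation*}
The normalisation $\|v\|_{L^2}=1$ together with orthogonality of $\bar v$ and $v-\bar v$ gives $\bar v^2|\Omega| + \|v-\bar v\|^2_{L^2(\Omega)} = 1$. Meanwhile, the pointwise inequality $(\bar v + (v-\bar v))^2 \ge \tfrac{1}{2}\bar v^2 - (v-\bar v)^2$ integrated over $\Omega_{00}$ yields
\begin{equation*}
\|v\|_{L^2(\Omega_{00})}^2 \;\ge\; \tfrac{|\Omega_{00}|}{2}\bar v^2 - \|v-\bar v\|^2_{L^2(\Omega)}.
\end{equation*}

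The endgame is a standard dichotomy. If the left-hand side of \eqref{eq:reduced} were smaller than some threshold $c_0>0$, then simultaneously $\|\nabla v\|^2 \le 4c_0$ and $\|v\|^2_{L^2(\Omega_{00})} \le c_0/q_{min}$. Poincaré--Wirtinger forces $\|v-\bar v\|^2 \le 4C_\Omega c_0$, so $\bar v^2|\Omega| \ge 1 - 4C_\Omega c_0$; plugging this into the previous display produces a lower bound on $\|v\|^2_{L^2(\Omega_{00})}$ of size $|\Omega_{00}|/(2|\Omega|) - O(c_0)$, which contradicts $\|v\|^2_{L^2(\Omega_{00})} \le c_0/q_{min}$ once $c_0$ is chosen small enough in terms of $|\Omega|, |\Omega_{00}|, C_\Omega, q_{min}$. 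Taking $c$ to be this threshold (times a harmless constant) gives the desired bound, with explicit dependence only on $\Omega, |\Omega_{00}|, q_{min}$.

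The only subtle point is that the constant must not depend on upper bounds for $\|D\|_\infty$ or $\|q\|_\infty$, which is why the argument discards the upper parts of $D$ and $q$ and works only with the lower bounds $D \ge 1/4$ and $q \ge q_{min}\mathbb{1}_{\Omega_{00}}$; the rest is an exercise in combining Poincaré--Wirtinger with the mass constraint. No serious obstacle is anticipated.
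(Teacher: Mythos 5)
Your proposal is correct, and its first half coincides with the paper's proof: both start from the variational (min--max) characterisation of $\lambda_{1,D,q}$, discard the upper bounds on $D$ and $q$ using $D\ge 1/4$ and $q\ge q_{min}\mathbb{1}_{\Omega_{00}}$, and reduce the claim to lower-bounding $\inf_{\|v\|_{L^2}=1}\bigl[\tfrac14\|\nabla v\|_{L^2(\Omega)}^2+q_{min}\|v\|_{L^2(\Omega_{00})}^2\bigr]$. (Taking the infimum over $H^1$ rather than $H^2_\nu$ is harmless, since $H^1$ is the form domain and $H^2_\nu$ is dense in it for the form norm.) Where you genuinely diverge is the endgame. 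The paper argues by contradiction: it supposes the infimum equals zero, invokes Poincar\'e to conclude $u=\mathrm{const}$ and hence $|\Omega_{00}|=0$; as written this is a soft argument that tacitly needs a compactness step (an infimum can be zero without being attained, so one must extract a minimising sequence and use Rellich to produce the constant $u$), and it yields positivity rather than an explicit constant. Your quantitative dichotomy --- combining Poincar\'e--Wirtinger with the mass constraint $\bar v^2|\Omega|+\|v-\bar v\|_{L^2}^2=1$ and the pointwise bound $(\bar v+(v-\bar v))^2\ge \tfrac12\bar v^2-(v-\bar v)^2$ (valid since $\tfrac12 a^2+2ab+2b^2=\tfrac12(a+2b)^2\ge 0$) --- avoids compactness entirely and produces an explicit threshold depending only on $|\Omega|$, $|\Omega_{00}|$, the Poincar\'e constant $C_\Omega$ and $q_{min}$, which is exactly the dependence $c(\Omega,|\Omega_{00}|,q_{min})$ asserted in the lemma. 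So your route is slightly longer but more self-contained and delivers the stated form of the constant more directly than the paper's own sketch.
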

\begin{proof}
By the variational characterisation of eigenvalues (Section~4.5 in
\cite{davies_spectral_1995}) and $D \ge 1/4$,
\begin{align*}
\lambda _{1,D,q} &= - \sup_{u \in H^{2}_{\nu},  \llVert u \rrVert _{L^{2}(\Omega )}=1} \langle \mathcal
L_{D,q}u, u \rangle _{L^{2}(\Omega )} \notag
\\
&= \inf_{u \in H^{2}_{\nu},  \llVert u \rrVert _{L^{2}(\Omega )}=1} \bigl[\langle D \nabla u, \nabla u\rangle
_{L^{2}(\Omega )} + \langle q u, u \rangle _{L^{2}(
\Omega )} \bigr]
\\
& \ge C_{q_{\mathrm{min}}}\inf_{u \in H^{2}_{\nu},  \llVert u \rrVert _{L^{2}(\Omega )}=1} \bigl[\langle \nabla u,
\nabla u\rangle _{L^{2}(\Omega )} + \llVert u \rrVert ^{2}_{L^{2}(
\Omega _{00})}
\bigr]
\end{align*}
for some $C_{q_{\min}}>0$. Suppose the last expression equals zero. By
the usual Poincar\'e inequality (page~292 in \cite{evans_partial_2010}),
this implies first that $u=\mathrm{const}$ and then that
$|\Omega _{00}|/|\Omega |=0$, a contradiction since $\Omega _{00}$ has
positive Lebesgue measure. The result follows.
\end{proof}

We deduce from Parseval's identity that for any $\varphi \in H^{2}$, we
have
\begin{equation*}
\mathcal L_{D,q}\varphi = -\sum_{j=1}^{\infty}
\lambda _{j,D,q} e_{j,D,q} \langle e_{j, D, q},
\varphi \rangle _{L^{2}},
\end{equation*}
with convergence at least in $L^{2}(\Omega )$, and that for any
$\varphi \in L^{2}$ the functions
\begin{equation*}
\mathcal L_{D,q}^{-1} [-\varphi ] = \sum
_{j=1}^{\infty }\lambda _{j,D,q}^{-1}
e_{j,D,q} \langle e_{j,D,q}, \varphi \rangle
_{L^{2}}
\end{equation*}
solve the PDE (\ref{ellip}) with right-hand side~$-\phi =-\varphi $ in
the $L^{2}$-sense. To see that these solutions satisfy Neumann boundary
conditions, let us define the associated Sobolev-type spaces
%
%e42 #&#
\begin{equation}
\label{sobspec} \tilde H^{b}(\Omega ) = \tilde
H^{b}_{D,q}(\Omega ) = \biggl\{f: \sum
_{j} \lambda _{j,D,q}^{b} \langle
f, e_{j,D,q} \rangle _{L^{2}}^{2} \equiv \llVert
f \rrVert _{\tilde H^{b}}^{2} <\infty \biggr\},\quad  b \ge -1.
%%LEAP%%%\label{eq42}
\end{equation}

By Parseval's identity $\tilde H^{0}=L^{2}$ while
$\tilde H^{-1} = (\tilde H^{1})^{*}$ is interpreted by duality in the usual
way. For $b >0$, these are subspaces of $L^{2}$ with the following properties.

%p5.2 #&#
\begin{proposition}%
%%LEAP%%%\label{prop5.2}
\label{evergreen}
\textup{(a)} For any $D \in C^{1}$, $0 \le q \in L^{\infty}$, we have that
$\tilde H^{2}_{D,q} = H^{2}_{\nu}$ and the graph norm
$\|\mathcal L_{D,q}f\|_{L^{2}}=\|f\|_{\tilde H^{2}_{D,q}}$ is Lipschitz-equivalent
to the $H^{2}$-norm on $H^{2}_{\nu}$, with equivalence constants depending
only on upper bounds on $\|q\|_{\infty}$, $\|D\|_{C^{1}}$ and on
$D_{\min}$.

\textup{(b)} Let $\|D\|_{H^{a}} + \|q\|_{H^{\eta}} \le B$ for some
$a>2+d/2$, $\eta >1+d/2$ and let $|b| \le \mathrm{min}(a+1, \eta +2)$. Then
$\tilde H^{b} \subset H^{b}$ and
$\|u\|_{H^{b}} \le c(B,b) \|u\|_{\tilde H^{b}}$ for all
$u \in \tilde H^{b}$. Conversely, if $u \in H^{b}$ is constant outside
of a compact set $K \subset \Omega $, then $u \in \tilde H^{b}$ and
$\|u\|_{\tilde H^{b}} \le c(B,b,K) \|u\|_{H^{b}}$.
\end{proposition}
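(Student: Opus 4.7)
The plan is to treat parts (a) and (b) separately. For (a), the norm identity $\|\mathcal L_{D,q} f\|_{L^2} = \|f\|_{\tilde H^2_{D,q}}$ is a direct Parseval computation in the $L^2$-orthonormal basis $\{e_{j,D,q}\}$: if $f = \sum_j c_j e_{j,D,q}$ then $\mathcal L_{D,q} f = -\sum_j \lambda_{j,D,q} c_j e_{j,D,q}$, so both sides equal $\sum_j \lambda_{j,D,q}^2 c_j^2$. To identify $\tilde H^2_{D,q}$ as a set with $H^2_\nu$ together with Lipschitz equivalence of norms, one direction $\|\mathcal L_{D,q} f\|_{L^2} \lesssim (\|D\|_{C^1}+\|q\|_\infty)\|f\|_{H^2}$ is immediate from the explicit form $\mathcal L_{D,q}f = D\Delta f + \nabla D \cdot \nabla f - q f$. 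The reverse direction uses standard $H^2$ elliptic regularity for divergence-form operators with Neumann boundary data, giving $\|f\|_{H^2} \lesssim \|\mathcal L_{D,q} f\|_{L^2} + \|f\|_{L^2}$, and the spectral-gap Lemma \ref{qgap} absorbs the lower-order term via $\|f\|_{L^2}^2 \le \lambda_{1,D,q}^{-2}\|\mathcal L_{D,q}f\|_{L^2}^2$.

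For part (b), I would prove the forward inclusion $\tilde H^b \subset H^b$ by induction on even integers $b=2k$: the base case $b=2$ is part (a), and the inductive step starts from $f \in \tilde H^{2k}$, which gives $g := \mathcal L_{D,q}f \in \tilde H^{2k-2} \subset H^{2k-2}$ with matching norms by the inductive hypothesis. Rewriting $D\Delta f = g - \nabla D\cdot\nabla f + qf$ and iterating standard elliptic regularity then yields $f \in H^{2k}$. The Sobolev multiplication inequality (\ref{multi}), combined with the embeddings $H^a \subset C^{a-d/2}$ and $H^\eta \subset C^{\eta-d/2}$ for $a > 2+d/2$, $\eta > 1+d/2$, controls the products involving $D$ and $q$ precisely up to $2k \le \min(a+1,\eta+2)$. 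Non-integer $b \ge 0$ is handled by complex interpolation between even integers $2k$, and the range $-1 \le b < 0$ follows by duality using $\tilde H^{-1} = (\tilde H^1)^*$ and self-adjointness of $\mathcal L_{D,q}$.

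The converse inclusion — that $u \in H^b$ which is constant outside a compact $K \subset \Omega$ lies in $\tilde H^b$ — is where the hypothesis $q \equiv 0$ near $\partial\Omega$ from Condition \ref{qcond} plays its decisive role. Since $u$ is constant in a neighbourhood of $\partial\Omega$, all its normal derivatives vanish there, and $\mathcal L_{D,q} u = D\Delta u + \nabla D\cdot\nabla u - qu$ vanishes identically on a neighbourhood of $\partial\Omega$ because each of $\Delta u$, $\nabla u$, and $q$ does. Iterating, every $\mathcal L_{D,q}^k u \in H^{b-2k}$ is again a function identically zero near $\partial\Omega$, so Neumann boundary conditions are satisfied automatically at every stage of the iteration. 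Sobolev multiplication then gives $\|u\|_{\tilde H^{2k}} = \|\mathcal L_{D,q}^k u\|_{L^2} \lesssim_K \|u\|_{H^{2k}}$ for integer $k$ with $2k \le \min(a+1, \eta+2)$, and interpolation handles non-even $b$.

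The main obstacle I anticipate is the bookkeeping of the Sobolev multiplication estimates at the threshold $b = \min(a+1, \eta+2)$, where the products $\nabla D \cdot \nabla f$ and $qf$ live just barely in the required Sobolev space; this demands a careful allocation of regularity between the $B^a$ and $H^a$ factors in (\ref{multi}), and one must verify that the interpolation between even integers indeed goes through on both scales (the $H^b$ scale is standard, while for the $\tilde H^b$ scale interpolation is clear from the spectral definition). The conceptual crux, however, is the geometric observation that $q \equiv 0$ near $\partial\Omega$ forces the vanishing-near-boundary property of $u$ to propagate through every application of $\mathcal L_{D,q}$; without it one would obtain only the one-sided embedding $\tilde H^b \subset H^b$ and lose the reverse bound that is essential for the small-ball computations in Lemma \ref{gpsmall} and Theorem \ref{fwdthm}.
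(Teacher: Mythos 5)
Your proof of part a) is essentially the paper's: the norm identity by Parseval, the easy direction from the pointwise expansion of $\mathcal L_{D,q}$, and the reverse via Neumann elliptic regularity with the spectral gap of Lemma \ref{qgap} absorbing the lower-order terms (the paper runs the elliptic estimate on spectral partial sums $f_J$ and passes to the limit rather than invoking weak-solution regularity directly on $f\in\tilde H^2$, but this is only a presentational difference). For part b) the paper gives no argument at all — it is "left to the reader" with a pointer to an induction in \cite{nickl_consistent_2023} — and your sketch (induction on even integers with the multiplication inequality (\ref{multi}) pinning down the threshold $\min(a+1,\eta+2)$, interpolation and duality for the remaining $b$, and the observation that $q=0$ near $\partial\Omega$ together with $u$ constant there makes every $\mathcal L_{D,q}^k u$ vanish near the boundary so the Neumann condition propagates through the iteration) is exactly the intended argument and supplies more detail than the paper does.
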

\begin{proof}
First, let $f \in \tilde H_{D,q}^{2}$ and define its $J$th partial sum
as
\begin{equation*}
f_{J}= \sum_{j \le J}
e_{j,D,q}\langle f, e_{J,D,q}\rangle _{L^{2}}, \quad J
\in \mathbb{N},
\end{equation*}
so that $f_{J} \in H^{2}_{\nu}$ since $e_{j} \in H^{2}_{\nu}$ and clearly
$f_{J} \to f$ in $\tilde H_{D,q}^{2}$. Using standard isomorphism theorems
for the Neumann--Laplacian (e.g., Theorem II.5.4 in
\cite{lions_non-homogeneous_1972}) and $D \ge 1/4$, we see for any
$u \in H^{2}_{\nu}$ that
\begin{align*}
\llVert u \rrVert _{H^{2}}
& \lesssim 
\llVert \Delta u \rrVert
_{L^{2}}  + \llVert u \rrVert_{L^{2}}  
\lesssim 
\llVert \mathcal L_{D,q}u \rrVert
_{L^{2}} + \llVert D \rrVert _{C^{1}} \llVert u \rrVert
_{H^{1}} + (1+ \llVert q \rrVert _{\infty }) \llVert u \rrVert
_{L^{2}}
\\
&\lesssim \llVert u \rrVert _{\tilde H^{2}_{D,q}} + \langle \mathcal
L_{D,q} u, u \rangle _{L^{2}}^{1/2} + (1+\llVert   q
\rrVert _{\infty }) \llVert u \rrVert _{L^{2}} \lesssim \llVert u
\rrVert _{
\tilde H^{2}_{D,q}}.
\end{align*}
Applying this to $u=f_{J}-f_{J'}$, we see that $f_{J}$ is a Cauchy sequence
in $H^{2}$, and hence $f \in H^{2}$, in fact by the trace Theorem I.9.4
in \cite{lions_non-homogeneous_1972} we must have
$f \in H^{2}_{\nu}$. Conversely, let $f \in H^{2}_{\nu}$. Then by the divergence
theorem and Parseval's identity,
\begin{align*}
\llVert f \rrVert ^{2}_{\tilde H^{2}_{D,q}} &= \sum
_{j} \lambda _{j,D,q}^{2} \langle f,
e_{j,D,q}\rangle _{L^{2}}^{2} = \sum
_{j} \langle \mathcal L_{D,q}f,
e_{j,D,q}\rangle _{L^{2}}^{2}
\\
&= \llVert \mathcal
L_{D,q}f \rrVert ^{2}_{L^{2}} \lesssim \bigl(
\llVert D \rrVert _{C^{1}}+ \llVert q \rrVert _{\infty}\bigr)^2
\llVert f \rrVert _{H^{2}}^2,
\end{align*}
so $f \in \tilde H^{2}_{D,q}$ and part (a) of the proposition follows.
Part (b) is proved by induction similar to Proposition~2 in
\cite{nickl_consistent_2023} and left to the reader.
\end{proof}

\textbf{Parabolic and steady-state equation}\vspace*{3pt}

We now study the underlying time-dependent equation and recall its relationship
to the steady-state equation (\ref{ellip}). Specifically, consider solutions
$v=v_{D,q}$ to the parabolic PDE:
%
%e43 #&#
\begin{align}
\label{schrodtime}
\begin{aligned}
\frac{\partial}{\partial t} v(t, \cdot ) &= \mathcal
L_{D,q}v \quad \text{on } (0,T] \times \Omega, %%LEAP%%%\label{eq43}
\\
v(0,\cdot ) &= \varphi \quad \text{on } \Omega,
\\
\frac{\partial v}{\partial \nu} &=0 \quad \text{ on } (0,T] \times \partial \Omega .
\end{aligned}
\end{align}
For any $\varphi \in L^{2}$ and $D$, $q$ as in Conditions \ref{qcond},
\ref{Dphicond}, unique solutions exist by standard PDE arguments (as, e.g.,
in Section~7.1.2 in \cite{evans_partial_2010}) and can be represented spectrally
as the convergent series in $L^{2}(\Omega )$ given by
%
%e44 #&#
\begin{equation}
v_{D,q}(t,\cdot ) = \sum_{j=1}^{\infty }e^{-t\lambda _{j,D,q}}
e_{j,D,q} \langle e_{j,D,q}, \varphi \rangle
_{L^{2}}. \label{eq44}
\end{equation}
One shows, using Proposition~\ref{evergreen}, that for
$\varphi \in \tilde H^{b}$, the solutions $v_{D,q}(t,\cdot )$ lie in\break
$C([0,T], \tilde H^{b}) \subset C([0,T], H^{b})$, where the parabolic spaces
$C([0,T],X)$ are defined as the bounded continuous maps from $[0,T]$ into
a Banach space $X$. We also have from Lemma~\ref{qgap}, (\ref{weyl}) and
Parseval's identity that
%
%e45 #&#
\begin{equation}
\label{earlier} \bigl\llVert v_{D,q}(t,\cdot ) \bigr\rrVert
_{L^{2}} \le e^{-ct} \llVert \varphi \rrVert
_{L^{2}}\quad \forall t>0, %%LEAP%%%\label{eq45}
\end{equation}
for some $c>0$, which shows that the energy of the system dissipates as
$t\to \infty $ (corresponding to the ``killing of all particles'' in Proposition~\ref{prop:binding-time-finite}). Integrating the penultimate expression
in time and using
$\int _{0}^{s} e^{-t \lambda} \,dt  = -[e^{-s\lambda}-1]/\lambda $, $
\lambda >0$, we have after letting $s \to \infty $, and using the dominated
convergence theorem,
%
%e46 #&#
\begin{equation}
\label{avsch}
\int _{0}^{\infty }v_{D,q}(t,\cdot ) \,dt
= \sum_{j} \lambda _{j,D,q}^{-1}
e_{j,D,q} \langle e_{j,D,q}, \varphi \rangle
_{L^{2}} = \mathcal L_{D,q}^{-1}[- \varphi ],
\quad \varphi \in L^{2}. %%LEAP%%%\label{eq46}
\end{equation}
Thus, the time average of the solutions to (\ref{schrodtime}) equal the
solutions $u_{D,q}$ to the steady-state equation (\ref{ellip}) with source
$-\phi = -\varphi $. From Proposition~\ref{evergreen} and the Sobolev imbedding,
we have that
$\varphi \in \tilde H_{D,q}^{b} \subset H^{b} \subset L^{\infty}$ for some
$b>d/2$ implies that the solutions $u_{D,q}$ lie in
$\tilde H_{D,q}^{2+b} \subset H^{2+b} \subset C^{2}$. Theorem~5.1 (or 5.2)
in \cite{F85} then allows us to represent the solution via the Feynman--Kac
formula
%
%e47 #&#
\begin{equation}
\label{fkacellip} u_{D,q}(x) =
\int _{0}^{\infty }v(t,x) \,dt = E^{x}
\int _{0}^{\infty } \varphi (X_{t}) \exp
\biggl\{-
\int _{0}^{t} q(X_{s})\,ds \biggr\}
\,dt ,\quad x \in \Omega , %%LEAP%%%\label{eq47}
\end{equation}
where $X_{s}$ is the Markov process from (\ref{eq:diffuso}) started at
$X_{0}=x$. We can now prove the following regularity lemmas for solutions
to (\ref{ellip}).

%l5.3 #&#
\begin{lemma}
%%LEAP%%%\label{lem5.3}
\label{techlem}
Assume Conditions \ref{qcond}, \ref{Dphicond} and that
$u_{D,q} = \mathcal L_{D,q}^{-1}(-\varphi )$ is the solution to (\ref{ellip})
with source $-\varphi =-\phi $.
\begin{longlist}[(b)]
\item[(a)] Suppose further $\varphi \in \tilde H_{D,q}^{b}$ for some $b>d/2$ and
that $\|q\|_{\infty }\le Q<\infty $. Then
\begin{equation*}
\inf_{x \in \Omega} u_{D,q}(x) \ge \frac{ \inf_{x \in \Omega}\varphi (x)}{Q}\quad \text{as well as }
\llVert u_{D,q} \rrVert _{\infty} \lesssim \llVert \varphi \rrVert
_{\infty}.
\end{equation*}
\item[(b)] We also have for any $\varphi \in L^{2}$ that
$\|u_{D,q}\|_{L^{2}} \le C_{q, \Omega _{00}, \Omega} \|\varphi \|_{
\tilde H^{-1}} \lesssim \|\varphi \|_{L^{2}}$.
\end{longlist}
\end{lemma}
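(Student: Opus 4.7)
The plan is to use the Feynman-Kac representation (\ref{fkacellip}) for part (a), and the spectral decomposition of $\mathcal L_{D,q}$ together with the spectral-gap Lemma~\ref{qgap} for part (b). Under the hypothesis $\varphi \in \tilde H^b_{D,q}$ with $b>d/2$, Proposition~\ref{evergreen} and the Sobolev embedding make $\varphi$ continuous, so the pointwise bounds below are meaningful.

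For part (a), I would start from (\ref{fkacellip}):
$$u_{D,q}(x) = E^x\!\int_0^\infty \varphi(X_t)\,e^{-M(t)}\,dt, \qquad M(t)=\int_0^t q(X_s)\,ds.$$
For the infimum bound, I pull $\inf_\Omega \varphi$ out of the (positive) integral; using $q \le Q$ pointwise gives $M(t) \le Qt$ and hence
$$E^x\!\int_0^\infty e^{-M(t)}\,dt \;\ge\; \int_0^\infty e^{-Qt}\,dt \;=\; \frac{1}{Q},$$
which yields $u_{D,q}(x) \ge (\inf_\Omega \varphi)/Q$. For the supremum bound, I would replace $\varphi$ by $\|\varphi\|_\infty$ in the Feynman-Kac integral; Fubini combined with the identity $e^{-M(t)} = P^Y(S>t)$ for an independent $Y \sim \mathrm{Exp}(1)$ rewrites the resulting expectation as the expected binding time $E^{x,Y}[S]$, whose finiteness is Proposition~\ref{prop:binding-time-finite} and whose uniform boundedness in $x$ follows by integrating the parabolic decay inherited from (\ref{earlier}) and Lemma~\ref{qgap}, after an $L^2 \to L^\infty$ smoothing step for $v_{D,q}(t,\cdot)$.

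For part (b), I would expand $u_{D,q} = \sum_j \lambda_{j,D,q}^{-1}\, e_{j,D,q}\,\langle \varphi, e_{j,D,q}\rangle_{L^2}$ and apply Parseval:
$$\|u_{D,q}\|_{L^2}^2 = \sum_j \lambda_{j,D,q}^{-2}\,\langle \varphi, e_{j,D,q}\rangle_{L^2}^2 \;\le\; \lambda_{1,D,q}^{-1}\sum_j \lambda_{j,D,q}^{-1}\,\langle \varphi, e_{j,D,q}\rangle_{L^2}^2 \;=\; \lambda_{1,D,q}^{-1}\,\|\varphi\|_{\tilde H^{-1}}^2.$$
Lemma~\ref{qgap} provides $\lambda_{1,D,q} \ge c(\Omega, |\Omega_{00}|, q_{\min}) > 0$, which gives the first bound with $C_{q,\Omega_{00},\Omega} = c^{-1/2}$. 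The second inequality $\|\varphi\|_{\tilde H^{-1}} \lesssim \|\varphi\|_{L^2}$ is immediate from the same spectral gap: $\|\varphi\|_{\tilde H^{-1}}^2 = \sum_j \lambda_{j,D,q}^{-1}\langle \varphi, e_{j,D,q}\rangle_{L^2}^2 \le \lambda_{1,D,q}^{-1}\|\varphi\|_{L^2}^2$.

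The only non-routine point is the supremum estimate in (a): because $q$ vanishes off the compact set $\Omega_0$, the reflected diffusion may spend long excursions in regions where the killing rate is zero, so controlling $\sup_x E^{x,Y}[S]$ quantitatively requires a genuine use of Lemma~\ref{qgap} (or equivalently a hitting-time estimate for $\Omega_{00}$) combined with an $L^\infty$-smoothing bound for the semigroup $v_{D,q}(t,\cdot)$; everything else reduces to (\ref{fkacellip}), Parseval, and a single invocation of the spectral gap.
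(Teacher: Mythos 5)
Your lower bound in (a) and your part (b) follow essentially the paper's route. For the infimum bound the paper likewise starts from (\ref{fkacellip}) and pulls $\inf_\Omega\varphi$ out against $\int_0^\infty e^{-Qt}dt=1/Q$; for (b) the paper phrases the spectral gap through $\tilde H^{-1}$-duality, writing $\|\mathcal L_{D,q}\psi\|_{\tilde H^{-1}}\ge -\langle\mathcal L_{D,q}\psi,\psi\rangle_{L^2}\ge c\|\psi\|_{L^2}^2$ and inserting $\psi=u_{D,q}$, which is the same Parseval/spectral-gap computation you perform directly (your version is if anything cleaner, since the paper's test function $g=-\psi$ is not normalised in $\tilde H^1$ as the supremum requires). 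The genuine divergence is the sup-norm bound in (a). The paper's proof is the one-liner $|u(x)|\le\|\varphi\|_\infty\, E^x\int_0^\infty e^{-\int_0^t q(X_s)ds}\,dt\le\|\varphi\|_\infty$ ``since $q\ge 0$'', i.e.\ it treats $E^x\int_0^\infty e^{-\int_0^t q(X_s)ds}\,dt$ as bounded by $1$. You correctly single out this expectation (the mean binding time under a unit-rate exponential clock) as the non-routine quantity, and rightly so: it equals $\mathcal L_{D,q}^{-1}[-1](x)$, is not bounded by $1$ in general, and diverges as $q\to 0$ (the identity $\int_\Omega q\,u_{D,q}=1$ already forces $u$ to be large somewhere when $q$ is small), so the constant $1$ in the stated inequality is not justified by the paper's argument. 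Your route --- a uniform-in-$x$ bound on the expected binding time via the spectral gap and semigroup smoothing, or more directly by applying Lemma \ref{regpde} and the Sobolev imbedding to $\mathcal L_{D,q}^{-1}[-1]$ with the constant datum $1\in\tilde H^b_{D,q}$ --- is completable and yields $\|u_{D,q}\|_\infty\le C(D,q,\Omega)\,\|\varphi\|_\infty$. That is weaker than the literal statement but is all that is used downstream (in Lemma \ref{fwdregu} this bound only ever enters up to an unspecified constant), so your more careful argument in effect repairs the paper's proof at the price of a worse, but harmless, constant.
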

\begin{proof}
For part (a), note that (\ref{fkacellip}) and the hypotheses on
$\varphi $, $q$ imply
\begin{equation*}
u(x) \ge \inf_{x \in \Omega}\varphi (x)
\int _{0}^{\infty }e^{-Qt}\,dt =
\frac{ \inf_{x \in \Omega}\varphi (x)}{Q},
\end{equation*}
%
% and similarly
% %
% \begin{equation*}
% \bigl\llvert u(x) \bigr\rrvert \le \llVert \varphi \rrVert _{\infty }E^{x}
% \exp \biggl\{-
% \int _{0}^{t} q(X_{s})\,ds \biggr\}
% \,dt \le \llVert \varphi \rrVert _{\infty}
% \end{equation*}
% %
% since $q \ge 0$. 

For the second inequality, similarly
\begin{align}\label{eq:binding-time-tail}
 \|u_{D,q}\|_\infty 
& \;\le\; 
 \|\varphi\|_\infty\,\sup_{x\in\Omega}{E}^{x}\!\int_{0}^{\infty}
\exp\!\Big\{\!-\!\int_{0}^{t}q(X_s)\,\mathrm{d}s\Big\}\,\mathrm{d}t \nonumber\\
 &\;=\;  
 \|\varphi\|_\infty\,\sup_{x\in\Omega}{E}^{x}\!\int_{0}^{\infty}
\Pr\big(Y > \int_{0}^{t}q(X_s)\,\mathrm{d}s \big)\,\mathrm{d}t \nonumber\\
&\;\le\; 
 \|\varphi\|_\infty\,\sup_{x\in\Omega}{E}^{x}\!\int_{0}^{\infty}
\Pr(S>t)\,\mathrm{d}t
\end{align}

\noindent with $Y\sim \mathrm{Exp(1)}$ and $S$ as in (\ref{eq:binding-time}) for $X_0=x$.  
Let $\tilde p_t(x,y)=\sum_{j\ge1}e^{-t\lambda_{j,D,q}}e_{j,D,q}(x)e_{j,D,q}(y)$ be the transition kernel of (\ref{eq44}). 
By the Markov property, for $t\ge1$ the function $\tilde p_t(x,\cdot)$ is the solution of (\ref{schrodtime}) at time $t-1$ started from initial condition $\tilde p_1(x,\cdot)$. As in the proof of Proposition \ref{prop:binding-time-finite} we obtain
$$ \Pr(S>t)\le|\Omega|^{1/2}e^{-c(t-1)}\|\tilde p_1(x,\cdot)\|_{L^2},\qquad t\ge1 .$$
The Sobolev embedding, Proposition \ref{evergreen}b, Definition (\ref{sobspec}) and (\ref{weyl}) give
$$\|e_{j,D,q}\|_\infty\lesssim\|e_{j,D,q}\|_{H^b}\lesssim \|e_{j,D,q}\|_{\tilde H^b}=\lambda_{j,D,q}^{b/2}\lesssim j^{b/d},$$
and therefore
$$\sup_{x\in\Omega}\|\tilde p_1(x,\cdot)\|_{L^2}^2\le\sum_{j\ge1}e^{-2\lambda_{j,D,q}}\|e_{j,D,q}\|^2_\infty\lesssim\sum_{j\ge1}j^{2b/d}e^{-2c'j^{2/d}}=:\kappa^2<\infty,$$
We conclude that, uniformly in $x\in \Omega$,
$$\int_{0}^\infty \Pr(S>t)dt 
 \leq 1 + \int_{1}^\infty \Pr(S>t)dt \leq 1 + |\Omega |^{1/2}\kappa/c$$
which combined with (\ref{eq:binding-time-tail}) completes the proof of part (a).\medskip

For part (b), we use (the proof of) Lemma~\ref{qgap} and
sequence space duality to the effect that for any
$\psi \in \tilde H^{1}$ and testing $g=-\psi $,
\begin{equation*}
\llVert \mathcal L_{D,q} \psi \rrVert _{\tilde H^{-1}} = \sup
_{ \llVert g \rrVert _{\tilde H^{1}}
\le 1} \bigl\llvert \langle \mathcal L_{D,q}\psi
, g \rangle _{L^{2}} \bigr\rrvert \ge - \langle \mathcal
L_{D,q}\psi , \psi \rangle _{L^{2}} \ge c \llVert \psi
\rrVert _{L^{2}}.
\end{equation*}
Inserting $\psi = u = \mathcal L_{D,q}^{-1}\varphi \in \tilde H^{1}$ and
the elementary embedding $\tilde H^{1} \subset L^{2}$ imply (b).
\end{proof}

%l5.4 #&#
\begin{lemma}
%%LEAP%%%\label{lem5.4}
\label{regpde}
In the setting of Lemma~\ref{techlem}, suppose that
$\varphi \in \tilde H_{D,q}^{b}$ for some
$b \le \min (a+1, \eta +2) -2$. Then
%
%e48 #&#
\begin{equation}
\sup_{ \llVert \varphi  \rrVert _{\tilde H^{b}} +  \llVert D \rrVert _{H^{a}} +  \llVert q \rrVert _{H^{\eta}}
\le B} \bigl\llVert \mathcal L_{D,q}^{-1}(-
\varphi ) \bigr\rrVert _{\tilde H_{D,q}^{b+2}} < \infty . \label{eq48}
\end{equation}
The result remains true if $q=0$ as well if we assume in addition that
$\int _{\Omega} \varphi =0$.
\end{lemma}
\begin{proof}
Within the spectrally defined $\tilde H^{\alpha}$ scale, by definition
of the $\tilde H^{b}$ norm we immediately obtain
\begin{equation*}
\sup_{ \llVert \varphi  \rrVert _{\tilde H^{b}} +  \llVert D \rrVert _{H^{a}} +  \llVert q \rrVert _{H^{\eta}}
\le B} \bigl\llVert \mathcal L_{D,q}^{-1}
\varphi \bigr\rrVert _{\tilde H^{b+2}} = \sup_{
 \llVert \varphi  \rrVert _{\tilde H^{b}} +  \llVert D \rrVert _{H^{a}} +  \llVert q \rrVert _{H^{\eta}} \le B} \llVert
\varphi \rrVert _{\tilde H^{b}} \leq B < \infty .
\end{equation*}
The proof for $q=0$ is similar under the additional constraint on
$\varphi $; cf.~Section~3.1 in \cite{nickl_consistent_2023}.
\end{proof}

%s5.2 #&#
\subsection{Identifiability and stability}
%%LEAP%%%\label{sec5.2}
\label{idstab}

We will now show that the diffusivity $D$ can be identified from
$u_{D,q}$ on a fixed subset $\Omega _{00} \subset \Omega $ if $q$ is small
enough in a $H^{2}$ sense, and assuming $d \le 3$. These hypotheses can
be weakened by the introduction of further technicalities (e.g., replacing
energy methods by Schauder estimates), but our focus here is not on obtaining
the most general stability estimate but on providing a concept proof that
inference on diffusivity from killed diffusion is possible under certain
hypotheses on the initial condition $\phi $. To ease notation, we take
$\Omega $ to have normalized Lebesgue measure equal to one, so that the
invariant measure of the process $(X_{t})$ from (\ref{eq:diffuso}) equals
$1$ identically on $\Omega $.

%c5.5 #&#
\begin{condition}%
\label{ident}
Let $d \le 3$. Assume that $|\Omega |=1$ and that for some strict subdomain
$\Omega _{00}$ of $\Omega $ and some $\epsilon >0$ the initial condition
$\phi $ of (\ref{eq:diffuso}) is a probability density that satisfies:
\begin{longlist}[2.]
\item[1.] either $\phi =1+\epsilon e_{1}$ where $e_{1} \in H^{2}_{\nu}$ is
the first nonconstant eigenfunction of the generator
$\mathcal L_{D_{0},0}$ of the diffusion process (\ref{eq:diffuso}), and
$e_{1}$ satisfies
$\inf_{x \in \Omega _{00}}|\nabla e_{1}(x)| \ge c>0$,
\item[2.] or $\phi \geq 1+ \epsilon $ on $\Omega _{00}$.
\end{longlist}
\end{condition}

Before we discuss this condition, let us state our main stability (inverse
continuity) estimate for the ``forward'' map $D \mapsto u_{D,q}$ of our
inverse problem.

%t5.6 #&#
\begin{theorem}
%%LEAP%%%\label{thm5.6}
\label{yetagain}
Assume $D_{i}, i=1,2; q, \phi $ satisfy Conditions \ref{qcond},
\ref{Dphicond} with $\alpha \geq 2+d/2$, and moreover, that $\phi $ satisfies
Condition~\ref{ident} with $D_{0}=D_{1}$ in case (a). Denote by
$u_{D_{i},q}$ the corresponding solutions to (\ref{ellip}). Suppose further
that $\|D_{1}\|_{H^{\alpha}}+\|D_{2}\|_{H^{\alpha}} \le B$ for some
$B>0$. Then there exist positive constants $\delta $, $C$ depending on
$b$, $B$, $\phi $, $\epsilon $, $\Omega $, $\Omega _{00}$, $\alpha $ such that if
$\|q\|_{H^{2}}<\delta $, we have
%
%e49 #&#
\begin{equation}
\llVert D_{1}- D_{2} \rrVert _{L^{2}(\Omega )} \le C
\llVert u_{D_{1},q}-u_{D_{2},q} \rrVert _{H^{2}(
\Omega _{00})}.
\label{eq49}
\end{equation}
\end{theorem}

The proof relies on spectral theory for Schr\"odinger operators with Neumann
boundary conditions and a general stability lemma for a transport operator,
taken from \cite{nickl_consistent_2023} and inspired by earlier work in
\cite{nickl_convergence_2020,nickl_bayesian_2023}. The details can be found
in Section \ref{sec:stab-proof}.

%r5.7 #&#
\begin{remark}%
\label{rem5.7}
\normalfont  We next discuss some physical interpretations of our hypotheses.
Since $\phi $ is a probability density, Condition~\ref{ident}(b) requires
the initial state to be prepared such that more than average $(=1)$ particles
start diffusing in the subdomain $\Omega _{00}$ where we wish to identify
$D$. This can be thought of as ensuring that we will see sufficiently many
binding events in the region $\Omega _{00}$. The hypothesis (a) is somewhat
more subtle to interpret: It essentially requires the domain
$\Omega $ and operator
$\mathcal L_{D_{0},0}=\nabla \cdot (D_{0} \nabla )$ to satisfy the
\text{hotspots conjecture} (see Sections~2.2.2 and 3.7 in
\cite{nickl_consistent_2023} for concrete examples and relevant references).
We notice that $e_{1} \in H^{2}_{\nu }\subset L^{\infty}$ (cf.~Proposition~2 in \cite{nickl_consistent_2023}), so Condition~\ref{Dphicond} is satisfied
for $b=2$, $d<4$ and $\epsilon =\epsilon (\|e_{1}\|_{\infty})$ small enough.
Since the distribution $p_{t}(\psi )$ of $X_{t}$ from (\ref{eq:diffuso})
for any initial condition $\psi $ at time $t$ equals
$1 + \sum_{j \ge 1} e^{-t\lambda _{j}} e_{j} \langle e_{j}, \psi
\rangle _{L^{2}}$ (with $(e_{j}, -\lambda _{j})$ the eigenpairs of
$\mathcal L_{D_{0},0}$), the perturbative ($t \to \infty $) interpretation
of hypothesis (a) is that $\phi = 1 +\epsilon e_{1}$ is already close to
the invariant distribution of $(X_{t})$, and the first eigenfunction
$e_{1}=e_{1,D_{0}}$ is the last ``informative'' part of diffusion that
we can see before it reaches its constant equilibrium. (As
$\phi =e_{1}$ now depends on $D_{0}$, one needs to replace $\phi $ by an
empirical estimate in any MCMC implementation.)

That $q$ needs to be small enough can be related to the problem that too
large $q$ may kill diffusing particles too aggressively for them to visit
all areas of $\Omega _{00}$ before they ``bind.'' One could equivalently
calibrate the parameter of the exponential random variable $Y$ featuring
in the binding time. We do not know whether this hypothesis is necessary
or not. The numerical findings from Section~\ref{subsec:numerics} suggest
that the ratio $D/q$ is relevant for stable recovery, and moreover, that
recovery of $D$ is possible also after relaxing the hypothesis Condition~\ref{ident}.
\end{remark}

%s6 #&#
\section{Proof for Section~\ref{pcont}: A concentration inequality for high-dimensional Poisson regression}
%%LEAP%%%\label{sec6}
\label{tprf}

For the construction of tests in the proof of Theorem~\ref{contract}, we
rely on a natural estimator $\hat \Lambda $ of $\Lambda $ with good concentration
properties---the normalized bin count, which gives the random vector in
$(\mathbb{R}^{+})^{K}$ with entries
%
%e50 #&#
\begin{equation}
\hat \Lambda _{i} = \frac{Y_{i}}{n},\quad i=1, \dots , K,
\label{eq:estimator}
\end{equation}
for data from (\ref{pcount}). Note that $\hat \Lambda $ does not satisfy
the ``PDE constraint'' (\ref{bindform}), hence cannot be used itself in
the context of our inverse problem since the stability estimate Theorem~\ref{yetagain} does not apply to it. Now consider the average
$\ell _{1}$-risk across the bins
%
%e51 #&#
\begin{equation}
\llVert \hat \Lambda - \Lambda \rrVert _{\ell _{1}} = \sum
_{i=1}^{K} \bigl\llvert \hat \Lambda
_{i} - \Lambda (B_{i}) \bigr\rrvert =
\frac{1}{n} \sum_{i=1}^{K}
\llvert Y_{i}- \mathbb{E}_{\Lambda}Y_{i} \rrvert
. \label{eq51}
\end{equation}
A basic first inequality is, using also the Cauchy--Schwarz inequality
twice,
\begin{equation*}
\mathbb{E}_{\Lambda} \llVert \hat \Lambda - \Lambda \rrVert
_{\ell _{1}} \le \frac{1}{n} \sum_{i \le K}
\sqrt{\operatorname{Var}(Y_{i})} = \frac{1}{n} \sum
_{i
\le K} \sqrt{n \Lambda (B_{i})} \leq
\frac{\sqrt K \sqrt{\Lambda (\mathbb O)}}{\sqrt {n}}.
\end{equation*}
This is upgraded to a probability inequality in the following theorem.
%
%t6.1 #&#
\begin{theorem}%
%%LEAP%%%\label{thm6.1}
\label{thm:concentration-ineq}
If $K \le Cn$ for some $C>0$, then for any $c'>0$ we can find $t$ large
enough depending on $c'$, $C$ and on an upper bound
$\Lambda _{\mathrm{max}} \ge \Lambda (\mathbb O)$, such that we have for every
$K, n \in \mathbb N$,
%
%e52 #&#
\begin{equation}
P_{\Lambda} \bigl( \llVert \hat \Lambda -\Lambda \rrVert
_{\ell _{1}} \ge t \sqrt {K/n} \bigr) \le e^{-c'K}
\label{eq:concentration-ineq} . %%LEAP%%%\label{eq52}
\end{equation}
\end{theorem}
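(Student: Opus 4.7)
The plan is to decompose $\|\hat\Lambda-\Lambda\|_{\ell_1}=\frac{1}{n}\sum_{i=1}^K V_i$ with $V_i=|Y_i-n\Lambda(B_i)|$, and then split this into its mean plus its fluctuation around the mean. The mean has already been bounded in the text by $\mathbb{E}_\Lambda \|\hat\Lambda-\Lambda\|_{\ell_1} \le \sqrt{K\Lambda(\mathbb{O})/n}\le \sqrt{K\Lambda_{\max}/n}$, so for any $t \ge 2\sqrt{\Lambda_{\max}}$ the event $\{\|\hat\Lambda-\Lambda\|_{\ell_1} \ge t\sqrt{K/n}\}$ is contained in $\{\sum_i (V_i-\mathbb{E} V_i)\ge (t/2)\sqrt{nK}\}$. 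The problem thus reduces to a one-sided concentration inequality for a sum of $K$ independent, centred random variables.

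Next I would control $X_i := V_i - \mathbb{E} V_i$ by a Bernstein/sub-gamma argument. They satisfy $\mathrm{Var}(X_i) \le \mathrm{Var}(Y_i) = n\Lambda(B_i)$, so $\sum_{i\le K} \mathrm{Var}(X_i) \le n\Lambda(\mathbb{O}) \le n\Lambda_{\max}$. From the standard centred Poisson log-MGF bound $\log \mathbb{E}\, e^{\pm s(Y_i-n\Lambda(B_i))} \le n\Lambda(B_i) s^2/(2(1-s/3))$ for $0<s<3$, and using $e^{s|z|} \le e^{sz}+e^{-sz}$, one gets the same sub-gamma estimate (up to a harmless constant) for $|Y_i-n\Lambda(B_i)|$; centring only shifts the linear part, so
\[
\log \mathbb{E}\, e^{s X_i} \le \frac{C n\Lambda(B_i)\, s^2}{1-cs}, \qquad 0<s<1/c,
\]
for universal $c,C>0$. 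Summing over $i$ and applying the Chernoff--Markov bound in the standard way yields
\[
P_\Lambda\!\left(\sum_{i=1}^K X_i \ge \tau\right) \le \exp\!\left(-c_0 \min\!\left(\frac{\tau^2}{n\Lambda_{\max}},\, \tau\right)\right).
\]

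Finally I would substitute $\tau = (t/2)\sqrt{nK}$ and check that both terms in the minimum dominate a multiple of $K$. The quadratic part equals $c_0 t^2 K/(4\Lambda_{\max})$. For the linear part, the hypothesis $K\le Cn$ gives $\sqrt{nK}\ge K/\sqrt C$, so this part is at least $c_0 t K/(2\sqrt C)$. Taking $t$ large enough depending on $c',\Lambda_{\max},C$, both contributions exceed $c'K$, which together with the reduction in the first paragraph gives $P_\Lambda(\|\hat\Lambda-\Lambda\|_{\ell_1}\ge t\sqrt{K/n})\le e^{-c'K}$, as claimed.

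The main technical obstacle is the MGF step for the centred absolute value: the absolute value breaks the clean multiplicative Poisson structure, and one must ensure the constants do not blow up after taking the product over $K$ terms. The cleanest workaround is to write $|Y_i-n\Lambda(B_i)| = (Y_i-n\Lambda(B_i)) + 2(n\Lambda(B_i)-Y_i)_+$, splitting the absolute value into a centred Poisson (direct sub-gamma MGF) and a non-negative remainder whose MGF can be bounded via one-sided Poisson tails; alternatively one may invoke any general Bennett--Bernstein inequality for independent sub-gamma variables with sub-gamma parameters read off the Poisson MGF. All remaining steps are standard Chernoff bookkeeping.
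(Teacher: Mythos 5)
Your argument is correct, but it takes a genuinely different route from the paper's. The paper linearizes the $\ell_1$-norm by duality, $\|\hat \Lambda - \Lambda\|_{\ell_1} = \sup_{\|v\|_{\ell_\infty}\le 1}|v^T(\hat \Lambda - \Lambda)|$, reduces the supremum to a maximum over a $1/2$-covering of the $\ell_\infty$-ball of cardinality at most $6^K$, and then applies a Chernoff--Bernstein bound to each linear statistic $\sum_i v_i(Y_i - \mathbb{E}_\Lambda Y_i)$ using the exact Poisson moment generating function; the $6^K$ union-bound factor is absorbed by taking $t$ large. You instead keep the sum of absolute deviations $\sum_i |Y_i - n\Lambda(B_i)|$ intact, peel off its mean via the bound $\mathbb{E}_\Lambda\|\hat\Lambda-\Lambda\|_{\ell_1}\le \sqrt{K\Lambda_{\max}/n}$ already recorded in the text, and apply a Bernstein-type bound to the centred absolute values. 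The one point you rightly flag --- the MGF of $|Y_i - n\Lambda(B_i)|$ --- is handled by $e^{s|z|}\le e^{sz}+e^{-sz}$ (or your splitting $|z| = z + 2z_-$), which costs a multiplicative factor of $2$ per bin and hence a prefactor $2^K$ in the Chernoff bound; this plays exactly the same role as the paper's $6^K$ covering factor and is likewise absorbed because the exponent grows like $tK$ for large $t$ (your displayed sub-gamma tail bound should carry this $e^{K\log 2}$ prefactor explicitly, but it does not affect the conclusion). Your route is somewhat more elementary in that it dispenses with the covering argument entirely; the paper's duality approach preserves the clean multiplicative Poisson structure for linear functionals and would extend more directly to suprema over other dual balls. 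Both arguments deliver the same $e^{-c'K}$ bound for $t$ large depending on $c'$, $C$ and $\Lambda_{\max}$.
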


\begin{proof}
We represent the $\ell _{1}$-norm of the vector
$\hat \Lambda - \Lambda $ on $\mathbb{R}^{K}$ by duality,
\begin{equation*}
\llVert \hat \Lambda - \Lambda \rrVert _{\ell _{1}} = \sup
_{v \in \mathbb{R}^{K}:
\max _{i} \llvert v_{i} \rrvert \le 1} \bigl\llvert v^{T} (\hat \Lambda -\Lambda
) \bigr\rrvert .
\end{equation*}
Let $B_{\infty}(1)$ be the unit ball of $\mathbb{R}^{K}$ for the
$\ell _{\infty}$ norm
$\|v\|_{\ell _{\infty}} = \max_{i \le K}|v_{i}|$, and let
$N(1/2):=N(B_{\infty}(1), \ell _{\infty},1/2)$ be the minimal covering
number of $B_{\infty}(1)$ by balls of $\ell _{\infty}$-radius $1/2$, with
centers $v_{(m)},~m=1,\ldots,N(1/2)$. By Proposition~4.3.34 in
\cite{gine_mathematical_2016}, we have $N(1/2) \le 6^{K}$. We deduce from
the triangle inequality for every fixed $v \in B_{\infty}(1)$ and some
$v_{(m)}$ that
\begin{align*}
\bigl\llvert v^{T} (\hat \Lambda -\Lambda ) \bigr\rrvert &\le \bigl
\llvert (v-v_{(m)})^{T}(\hat \Lambda - \Lambda ) \bigr
\rrvert + \bigl\llvert v_{(m)}^{T}(\hat \Lambda - \Lambda
) \bigr\rrvert \le \frac{1}{2} \llVert \hat \Lambda - \Lambda \rrVert
_{\ell _{1}} + \bigl\llvert v_{(m)}^{T}(\hat
\Lambda - \Lambda ) \bigr\rrvert.
\end{align*}
Taking suprema over $v \in B_{\infty}(1)$ and subtracting, we obtain
\begin{equation*}
\frac{1}{2} \llVert \hat \Lambda - \Lambda \rrVert _{\ell _{1}}
\le \max_{m=1,
\dots , N(1/2)} \bigl\llvert v_{(m)}^{T}(
\hat \Lambda - \Lambda ) \bigr\rrvert
\end{equation*}
so that the left-hand side~of (\ref{eq:concentration-ineq}) is bounded,
using also a union bound for probabilities, by\vspace*{6pt}
%
%e53 #&#
\begin{equation}
\label{tobound2} P_{\Lambda} \Bigl(\max_{m=1, \dots , N(1/2)}
\bigl\llvert v_{(m)}^{T}(\hat \Lambda - \Lambda ) \bigr
\rrvert \ge t \sqrt{K/n}/2 \Bigr) \le 6^{K} \sup_{v\in B_{\infty}(1)}P_{
\Lambda }
\bigl(n \bigl\llvert v^{T}(\hat \Lambda - \Lambda ) \bigr\rrvert \ge
t \sqrt{Kn}/2 \bigr). %%LEAP%%%\label{eq53}
\end{equation}
We now need an appropriate tail bound for the random variables
\begin{equation*}
nv^{T}(\hat \Lambda - \Lambda ) = \sum
_{i=1}^{K} v_{i}(Y_{i}-
\mathbb{E}_{\Lambda} Y_{i}), \quad v \in B_{\infty}(1).
\end{equation*}
We consider each summand and $0<x<3$. First, if $0 \le v_{i} \le 1$, so
that $0<xv_{i} <3$, the moment generating function can be bounded as
\begin{align*}
\mathbb{E}_{\Lambda}\exp \bigl\{xv_{i}(Y_{i}-
\mathbb{E}_{\Lambda} Y_{i})\bigr\} &= \exp \bigl\{n \Lambda
(B_{i}) \bigl[e^{xv_{i}} -1 -xv_{i}\bigr] \bigr
\} \le \exp \biggl\{\frac{n \Lambda (B_{i}) (xv_{i})^{2}}{2(1- \llvert xv_{i} \rrvert /3)} \biggr\}
\\
&\le \exp \biggl\{\frac{n \Lambda (B_{i}) x^{2}}{2(1- \llvert x \rrvert /3)} \biggr\},
\end{align*}
using that the $Y_{i} \sim \operatorname{Poisson}(n \Lambda (B_{i}))$ as well as (3.10)
and (3.22) in \cite{gine_mathematical_2016}. Similarly, if
$-1<v_{i}<0$, so $0<-xv_{i}<3$. Then we have from
$e^{-y} + y -1 \le y^{2}/2$ for $-xv_{i} = y \in (0,3)$ that
\begin{equation*}
\mathbb{E}_{\Lambda} e^{xv_{i}(Y_{i}-\mathbb{E}_{\Lambda} Y_{i})} = e^{n
\Lambda (B_{i}) [e^{xv_{i}} -1 -xv_{i}]} \le
e^{n \Lambda (B_{i})(xv_{i})^{2}/2},
\end{equation*}
which is also upper bounded by the right-hand side~of the penultimate display.
In summary, by Markov's inequality and independence of the bin counts,
for any $z>0$ and $0<x<3$,
\begin{align*}
P_{\Lambda} \Biggl(\sum_{i=1}^{K}
v_{i}(Y_{i}-\mathbb{E}_{\Lambda}
Y_{i}) > z \Biggr) & \le \frac{\mathbb{E}_{\Lambda}[e^{\sum _{i=1}^{K} x v_{i}(Y_{i}-\mathbb{E}_{\Lambda} Y_{i})}]}{e^{xz}} \le
e^{-xz} \prod_{i=1}^{K}
\exp \biggl\{ \frac{n \Lambda (B_{i}) x^{2}}{2(1- \llvert x \rrvert /3)} \biggr\}
\\
&\le e^{-xz + \frac{n \Lambda (\mathbb O) x^{2}}{2-2 \llvert x \rrvert /3}} \le e^{-
\frac{z^{2}}{2n \Lambda (\mathbb O) + 2z}},
\end{align*}
after choosing $x= z/(n \Lambda (\mathbb O)+z) \in (0,1)$. Applying this
with $z=t \sqrt{n K}/2$ (and repeating the argument with $-v$ to deal with
$|\cdot |$) gives the following bound for (\ref{tobound2}):
\begin{equation*}
6^{K} \sup_{v\in B_{\infty}(1)} P_{\Lambda } \bigl(n
\bigl\llvert v^{T}( \hat \Lambda - \Lambda ) \bigr\rrvert \ge t
\sqrt{n K}/2 \bigr) \le e^{cK} e^{-
\frac{t^{2}nK/4}{2n \Lambda (\mathbb O) + t \sqrt{nK}}},
\end{equation*}
with universal constant $c>0$. Now since $K \le Cn$ we obtain that the
last tail is bounded by $ e^{-c'K}$ for all $t$ large enough, completing
the proof.
\end{proof}

%%%%%%%%%%%%%%%%%%%%%%%%%%%%%%%%%%%%%%%%%%%%%%%%%

% SUPPLEMENT

\section{Additional proofs for Section \ref{pcont}} \label{sec:supp_stats}

\subsection{Proof of Lemma \ref{klbd}}

\begin{proof}
    
By independence and the definitions, it suffices to prove 
    $$- \mathbb{E}_{\Lambda_{0}}\left(\log \frac{p_{\Lambda}}{p_{\Lambda_0}}(Y)\right) \le 2 n\frac{|\Lambda(B)-\Lambda_{0}(B)|^2}{\Lambda_0(B)}$$ 
for every $Y=Y_i, B=B_i$ and where $p_\Lambda = p_{\Lambda, n}^1$ in this proof. From the definitions of Poisson densities we have
\begin{align}
    \frac{p_{\Lambda}}{p_{\Lambda_0}}(y) &=  e^{-n(\Lambda(B)-\Lambda_0(B))}\left[\frac{\Lambda(B)}{\Lambda_0(B)}\right]^y
\end{align}
and therefore
\begin{align*}
    \mathbb{E}_{\Lambda_0}\left[\log\frac{p_{\Lambda}}{p_{\Lambda_0}}(Y) \right]
    &= -n[\Lambda(B)-\Lambda_0(B)]+\log[\Lambda(B)/\Lambda_0(B)] \mathbb{E}_{\Lambda_0}[Y]\\
    &= -n[\Lambda(B)-\Lambda_0(B)]+n\Lambda_0(B) \log\left[1+\frac{\Lambda(B)-\Lambda_0(B)}{\Lambda_0(B)}\right].
  \end{align*}

Using a second order Taylor expansion about zero with mean value remainder 
$$\log(1+x)=x - \frac{1}{2}x^2\frac{1}{(1+\zeta x)^2} \hspace{1cm} \text{for some $\zeta \in (0,1)$}$$
at $x = \frac{\Lambda(B)-\Lambda_0(B)}{\Lambda_0(B)} \in (-1/2,1/2)$, we deduce

\begin{align*}
\log\left(1+\frac{\Lambda(B)-\Lambda_0(B)}{\Lambda_0(B)}\right)=\frac{\Lambda(B)-\Lambda_0(B)}{\Lambda_0(B)} - \frac{1}{2}\left(\frac{\Lambda(B)-\Lambda_0(B)}{\Lambda_0(B)}\right)^2 \frac{1}{(1+\zeta \frac{\Lambda(B)-\Lambda_0(B)}{\Lambda_0(B)})^2}.
\end{align*}

The last factor is bounded by $4$ under our hypothesis and we deduce

\begin{align*}
-\mathbb{E}_{P_{\Lambda_0}}\left[\log\frac{p_{\Lambda}}{p_{\Lambda_0}}(Y) \right]
    &= n \frac{(\Lambda(B)-\Lambda_0(B))^2}{2\Lambda_0(B)}\frac{1}{(1+\zeta \frac{\Lambda(B)-\Lambda_0(B)}{\Lambda_0(B)})^2} \le 2 n \frac{(\Lambda(B)-\Lambda_0(B))^2}{\Lambda_0(B)},
\end{align*}
completing the proof.

\end{proof}

\subsection{Proof of Lemma \ref{lemma:prelim-stats}}

\begin{proof}
We prove the result for $c\le2$ relevant below, in which case we will show that one can take $L_c=c^2/8$. The case $c>2$ follows from simple modifications and is left to the reader. By Jensen's inequality,
        \begin{align*}
            \log \int_{A_\epsilon} \frac{p_{\Lambda}}{p_{\Lambda_0}}(Y_1,...,Y_K)d\nu(\Lambda) \ge \int_{A_\epsilon} \log \frac{p_\Lambda}{p_{\Lambda_0}}(Y_1,...,Y_K)d\nu(\Lambda)
        \end{align*}
so using also Lemma \ref{klbd}, the probability in question is bounded, for $s>0$, by
    \begin{align*}
       P_{\Lambda_0} &\left(\int_{A_\epsilon} \log \frac{p_\Lambda}{p_{\Lambda_0}}(Y_1,...,Y_K)d\nu(\Lambda) \leq -  (2+c)n\epsilon^2  \right)\\
        &= P_{\Lambda_0}\Big(\int_{A_\epsilon} \big[\log \frac{p_\Lambda}{p_{\Lambda_0}}(Y_1,...,Y_K)-\mathbb{E}_{\Lambda_0}\log \frac{p_\Lambda}{p_{\Lambda_0}}(Y_1,...,Y_K)\big]d\nu(\Lambda)  \\
       & ~~~~~~~~~~~~~~~~~~~\leq  -(2+c)n\epsilon^2 - \int_{A_\epsilon} \mathbb{E}_{\Lambda_0}\log \frac{p_\Lambda}{p_{\Lambda_0}}(Y_1,...,Y_K) d\nu(\Lambda) \Big)\\
     &\le P_{\Lambda_0} \left(\int_{A_\epsilon} \big[\log \frac{p_\Lambda}{p_{\Lambda_0}}(Y_1,...,Y_K)-\mathbb{E}_{\Lambda_0}\log \frac{p_\Lambda}{p_{\Lambda_0}}(Y_1,...,Y_K)\big]d\nu(\Lambda)   \leq  -cn \epsilon^2 \right)\\
     &\le P_{\Lambda_0} \left(\exp{s\left |\mathbb{E}_{\nu} \left[\log \frac{p_\Lambda}{p_{\Lambda_0}}(Y_1,...,Y_K) - \mathbb{E}_{\Lambda_0}\log \frac{p_\Lambda}{p_{\Lambda_0}}(Y_1,...,Y_K)\right]\right|} \geq  e^{sc n\epsilon^2 }\right) \\
        &\leq e^{-snc\epsilon^2}\mathbb{E}_{\Lambda_0} \left[\exp{s\left|\mathbb{E}_{\nu} \left[\log \frac{p_\Lambda}{p_{\Lambda_0}}(Y_1,...,Y_K) - \mathbb{E}_{\Lambda_0}\log \frac{p_\Lambda}{p_{\Lambda_0}}(Y_1,...,Y_K)\right]\right|}\right]\\
        &\leq e^{-snc\epsilon^2}\mathbb{E}_{\nu} \left[\mathbb{E}_{\Lambda_0} \left[\exp{s\left|\log \frac{p_\Lambda}{p_{\Lambda_0}}(Y_1,...,Y_K) - \mathbb{E}_{\Lambda_0}\log \frac{p_\Lambda}{p_{\Lambda_0}}(Y_1,...,Y_K)\right|}\right] \right]
    \end{align*}
    using also Jensen's inequality and Fubini's theorem in the last step. We define random variables $$Z_i := \log \frac{p_\Lambda}{p_{\Lambda_0}}\left(Y_i\right) - \mathbb{E}_{\Lambda_0} \log \frac{p_\Lambda}{p_{\Lambda_0}}\left(Y_i\right)=Y_i \log \frac{\Lambda\left(B_i\right)}{\Lambda_0\left(B_i\right)} - n\Lambda_0(B_i)\log \frac{\Lambda\left(B_i\right)}{\Lambda_0\left(B_i\right)},~~i=1, \dots, K,$$ and estimate the expectation of $e^{s|\sum_i Z_i|} \le e^{s \sum_i Z_i}+e^{-s \sum_i Z_i}.$ For $e^{s \sum_i Z_i}$ we have by independence
\begin{align}
    \mathbb{E}_{\Lambda_0}[e^{s\sum_i Z_i}] &= \prod\limits_{i=1}^K \mathbb{E}_{\Lambda_0}\left[\exp\left(s\log \frac{\Lambda(B_i)}{\Lambda_0(B_i)} Y_i - s\log\frac{\Lambda(B_i)}{\Lambda_0(B_i)}  n\Lambda_0(B_i)\right)\right] \nonumber \\
    &= \prod\limits_{i=1}^K \exp\left(n\Lambda_0(B_i) \left(e^{s\log \frac{\Lambda(B_i)}{\Lambda_0(B_i)}} -1 - s\log \frac{\Lambda(B_i)}{\Lambda_0(B_i)}\right) \right). \label{eq:exp-before-taylor}
\end{align}

Using a second order Taylor expansion with mean value remainder, $0<\zeta<1$, and the hypothesis $\max_i|\frac{\Lambda(B_i)-\Lambda_0(B_i)}{\Lambda_0(B_i)}| <1/2$ we get for $0<s\le 1$ that
$$
e^{s \log \frac{\Lambda\left(B_i\right)}{\Lambda_0\left(B_i\right)}}=1+s \log \frac{\Lambda\left(B_i\right)}{\Lambda_0\left(B_i\right)}+\frac{s^2}{2} \left(\log \frac{\Lambda\left(B_i\right)}{\Lambda_0\left(B_i\right)}\right)^2 \underbrace{{e^{\zeta s \log \frac{\Lambda\left(B_i \right)}{\Lambda_0\left(B_i\right)}}}}_{C}
$$
with $C \le 3/2$. Similarly from the mean value theorem

$$\log \frac{\Lambda(B_i)}{\Lambda_0(B_i)}
= \log\left(1+ \frac{\Lambda(B_i)-\Lambda_0(B_i)}{\Lambda_0(B_i)} \right) 
= \frac{\Lambda(B_i)-\Lambda_0(B_i)}{\Lambda_0(B_i)} \underbrace{\cfrac{1}{1+\zeta \frac{\Lambda(B_i)-\Lambda_0(B_i)}{\Lambda_0(B_i)}}}_{C'}
$$ where $1/C' \le 2$. We conclude that (\ref{eq:exp-before-taylor}) is bounded by
\begin{align*}
    \mathbb{E}_{\Lambda_0}[e^{s\sum_i Z_i}]
    &\le \prod\limits_{i=1}^K \exp\left(n\Lambda_0(B_i) s^2 \big(\log \frac{\Lambda(B_i)}{\Lambda_0(B_i)}\big)^2  \right)\\
    &\le  \prod\limits_{i=1}^K \exp\left(2n s^2\frac{(\Lambda(B_i)-\Lambda_0(B_i))^2}{\Lambda_0(B_i)} \right)= e^{ 2s^2n \mathcal D^2_{2,K}(\Lambda, \Lambda_0)},~~0<s \le 1.
\end{align*}
and the same inequality can be established for $e^{-s\sum_i Z_i}$ by similar arguments. Combining what precedes with the support of the probability measure $\nu$ we obtain for all $0<s\le 1$ the bound
\begin{align*}
P_{\Lambda_0}\left(\int_{A_\epsilon} \frac{p_\Lambda}{p_{\Lambda_0}}(Y_1,...,Y_K)d\nu(\Lambda) \leq e^{-(2+c)n\epsilon^2} \right) 
&\leq  2e^{-snc\epsilon^2}\mathbb{E}_{\nu}  e^{ 2s^2n \mathcal D^2_{2,K}(\Lambda, \Lambda_0)} \\
&\leq  2e^{-snc\epsilon^2 + 2s^2n \epsilon^2} = 2e^{-n\epsilon^2 (sc- 2s^2)}.
\end{align*}
Choosing $s=c/4 <1$ (permitted since we assumed $c\le 2$) gives the overall bound
$$P_{\Lambda_0}\left(\int_{A_\epsilon} \frac{p_\Lambda}{p_{\Lambda_0}}(Y_1,...,Y_K)d\nu(\Lambda) \leq e^{-(2+c)n\epsilon^2} \right) \leq 2e^{-c^2n\epsilon^2/8},$$ completing the proof.

\end{proof}

\subsection{Proof of Theorem \ref{contract}}

\begin{proof}
    
  \textbf{Step 1.} Let us introduce the sets $$\mathcal{L}^n= \Theta_n \cap \mathcal R.$$ We can find tests (indicator functions) $\Psi_n \equiv \Psi(Y_1,...,Y_K,n)$ such that for every $n\in \mathbb{N}$ and $M>0$ large enough depending on $C, \Lambda_{max}$,
$$(i) ~ \mathbb E_{\Lambda_0}\Psi_n \to 0, \hspace{1cm} (ii) ~ \sup\limits_{\Lambda \in \mathcal L^n: ||\Lambda-\Lambda_0||_{\ell_1}\geq M\sqrt{K/n}} \mathbb{E}_{\Lambda}(1-\Psi_n) \leq Le^{-(C+4)n\varepsilon_n^2}$$

    \begin{proof}
    Define $\Psi_n = {1}\{||\hat{\Lambda}-\Lambda_0||_{\ell_1} \geq t\sqrt{K/n}\}$ for $\hat \Lambda$ as in (\ref{eq:estimator}). Part (i) follows from the concentration inequality Theorem \ref{thm:concentration-ineq} and so does (ii) in view of the following argument: note that
    $$\mathbb{E}_{\Lambda}[1-\Psi_n]=P_\Lambda(\Psi_n=0)=P_\Lambda(||\hat{\Lambda}-\Lambda_0||_{\ell_1} \leq t\sqrt{K/n}).$$
But 
    $$||\hat{\Lambda}-\Lambda_0||_{\ell_1} = ||\hat{\Lambda}-\Lambda+\Lambda -\Lambda_0||_{\ell_1} \geq ||\Lambda -\Lambda_0||_{\ell_1} - ||\hat{\Lambda}-\Lambda||_{\ell_1}$$
so that
    \begin{align*}
        \mathbb{E}_{\Lambda}[1-\Psi_n] &\leq P_{\Lambda} (||\Lambda -\Lambda_0||_{\ell_1} - ||\hat{\Lambda}-\Lambda||_{\ell_1} < t\sqrt{K/n})= P_{\Lambda} (||\hat{\Lambda}-\Lambda||_{\ell_1} > ||\Lambda -\Lambda_0||_{\ell_1} - t\sqrt{K/n})
    \end{align*}
    and hence, using Theorem \ref{thm:concentration-ineq}, we have for all $\Lambda \in \mathcal L^n$ such that $\|\Lambda-\Lambda_0\|_{\ell_1}\geq M\sqrt{K/n}$ the bound
    \begin{align*}
        \mathbb{E}_{\Lambda}(1-\Psi_n) &\leq   P_{\Lambda} (||\hat{\Lambda}-\Lambda||_{\ell_1} > ||\Lambda -\Lambda_0||_{\ell_1} - t\sqrt{K/n})\leq P_{\Lambda} (||\hat{\Lambda}-\Lambda||_{\ell_1} > M\sqrt{K/n} - t\sqrt{K/n})\\
        &\leq P_{\Lambda} (||\hat{\Lambda}-\Lambda||_{\ell_1} > (M-t)\sqrt{K/n}) \leq e^{-c'K} \le e^{-c'n\varepsilon_n^2}.
    \end{align*}
\end{proof}

  \textbf{Step 2.} Notice first

    $$\mathbb{E}_{\Lambda_0}\left[ \Pi(\{\Lambda\in (\mathcal{L}^n)^c \cup ||\Lambda-\Lambda_0||_{\ell_1} \geq M\sqrt{K/n} | Y_1,...,Y_K\}) \Psi_n\right] \leq \mathbb{E}_{\Lambda_0}\Psi_n \to 0$$
    by assumption on the tests. Let us write shorthand
    $$\bar L(K,n) \equiv  (\mathcal L^n)^c \cup  \{\Lambda: ||\Lambda-\Lambda_0||_{\ell_1} \geq M\sqrt{K/n}\}$$ so we only need to prove convergence in $P_{\Lambda_0}$ to $0$ of
    \begin{align*}
         \Pi(\bar L(K,n)| Y_1,...,Y_K)(1- \Psi_n)= \frac{\int_{\bar L(K,n)}(p_{\Lambda}/p_{\Lambda_0})(Y_1,...,Y_K)d\Pi(\Lambda)(1-\Psi_n)}{\int (p_{\Lambda}/p_{\Lambda_0})(Y_1,...,Y_K)d\Pi(\Lambda)}
    \end{align*}
    Lemma \ref{lemma:prelim-stats} shows that for all $c>0$ and probability measures $\nu$ with support in 
$$B_n:= \left\{ \Lambda \in \mathcal L^n: \mathcal{D}^2_{2,K}(\Lambda,\Lambda_0) \leq \varepsilon_n^2,~~ \mathcal D_{\infty, K}(\Lambda, \Lambda_0)\leq 1/2 \right\},$$
    we have 
    $$P_{\Lambda_0}\left(\int \frac{p_\Lambda}{p_{\Lambda_0}}(Y_1,...,Y_K)d\nu(\Lambda) \leq e^{-(2+c)n\varepsilon_n^2} \right) \leq 2e^{-L_c n\varepsilon_n^2}.$$
We apply this result with $c=1$ and $\nu$ equal to the normalised restriction of the prior $\Pi$ to $B_n$. If $A_n$ is the event
    $$A_n := \left\{ \int_{B_n} \frac{p_{\Lambda}}{p_{\Lambda_0}}(Y_1,...,Y_K)d\Pi(\Lambda) \geq \Pi(B_n)e^{-3n\varepsilon_n^2} \geq e^{-(3+C)n\varepsilon_n^2} \right\},$$
    then $P_{\Lambda_0}(A_n) \geq 1-2e^{-L_1 n\varepsilon_n^2} \to 1$, and we can write for every $\epsilon>0$,
\begin{align*}P_{\Lambda_0}&\left(\frac{\int_{\bar L(K,n)} (p_{\Lambda}/p_{\Lambda_0})(Y_1,...,Y_K)d\Pi(\Lambda)(1-\Psi_n)}{\int (p_\Lambda/p_{\Lambda_0})(Y_1,...,Y_K)d\Pi(\Lambda)} > \epsilon \right)\\
        &\leq P_{\Lambda_0}(A_n^c)+ P_{\Lambda_0}\left(e^{(3+C)n\varepsilon_n^2}(1-\Psi_n)\int_{\bar L(K,n)} \frac{p_{\Lambda}}{p_{\Lambda_0}}(Y_1,...,Y_K)d\Pi(\Lambda) \geq \epsilon\right).
    \end{align*}
    Now, using that
  $$\mathbb{E}_{\Lambda_0}\left[\frac{p_\Lambda}{p_{\Lambda_0}}(Y_1,...,Y_K)\right]=\int_{p_{\Lambda_0} >0} p_\Lambda d\mu \leq 1,\text{ ($\mu$ counting measure on $\mathbb{N}_0^K$)}$$
  $$\mathbb{E}_{\Lambda_0}\left[ \frac{p_\Lambda}{p_{\Lambda_0}}(Y_1,...,Y_K)(1-\Psi_n)\right] \leq \mathbb{E}_\Lambda(1-\Psi_n)$$
and that $0 \leq 1- \Psi_n \leq 1$, we obtain $$\mathbb{E}_{\Lambda_0}\left[(1-\Psi_n)\int_{\bar L(K,n)} \frac{p_\Lambda}{p_{\Lambda_0}}(Y_1,...,Y_K)d\Pi(\Lambda)\right] \leq 2\Pi(\mathcal{L}\text{\textbackslash}\mathcal{L}^n) + \sup\limits_{\Lambda\in \mathcal{L}^n:||\Lambda -\Lambda_0||_{\ell_1}\geq M\sqrt{K/n}} \mathbb{E}_{\Lambda}(1-\Psi_n).$$
Now the assumptions on $\mathcal{L}^n$ and concentration properties of the tests combined with Markov's inequality give, for every $\epsilon>0$,
$$P_{\Lambda_0}\left((1-\Psi_n)\int_{\bar L(K,n)} \frac{p_\Lambda}{p_{\Lambda_0}}(Y_1,...,Y_K)d\Pi(\Lambda) > \frac{\epsilon}{e^{(3+C)n\varepsilon_n^2}}\right) \leq (3L/\epsilon)e^{-n\varepsilon_n^2}$$ and the theorem follows by combining the preceding estimates and appropriate choice of $\epsilon$.
\end{proof}

\section{Proof of Theorem \ref{yetagain} (Stability Estimate)}\label{sec:stab-proof}

We first recall a general stability lemma for a transport operator, taken from \cite{nickl_consistent_2023} and inspired by earlier work in \cite{nickl_convergence_2020, nickl_bayesian_2023}.

\begin{condition}  \label{eq:condition}
    Let $u_0 \in H^2(\Omega)$ be a function such that $\sup_{x \in \Omega_{00}} |u_0(x)| \leq U < \infty$ and 
    \begin{equation}
        \frac{1}{2} \Delta u_0(x)+\mu |\nabla u_0(x)|^2 \geq c_0 >0, \hspace{0.5cm} a.e. \hspace{0.5cm} x \in \Omega_{00}
    \end{equation}
    for some subset $\Omega_{00}$ s.t. $dist(\Omega_{00},\partial\Omega)>0$.
\end{condition}

\begin{lemma} \label{transplem} (Lemma 2 in \cite{nickl_consistent_2023})
    Suppose $u_0$ (or $-u_0$) satisfies  Condition \ref{eq:condition}. Then there exists a constant $\underbar c = \underbar c (U, c_0, \mu)>0$ such that we have, for any $h\in C^1$ that vanishes on $\Omega \setminus \Omega_{00}$, $$||\nabla \cdot (h \nabla u_0)||_{L^2(\Omega)} \geq \underbar c ||h||_{L^2(\Omega)}.$$
\end{lemma}

Given the preceding lemma, the proof of Theorem \ref{yetagain} proceeds in several steps by comparing the solution of (\ref{ellip}) to the solution of the standard Neumann problem with $q=0$, which exists if the `source' term $\phi$ integrates to zero. 

\smallskip

\textbf{Step 1:} For $\phi$ satisfying Conditions \ref{Dphicond} and \ref{ident} we set $$\phi_0 := \phi -1 =\phi - \int_\Omega \phi \text{ and }w_{\phi}= \mathcal{L}^{-1}_{D_1,0}[-\phi_0].$$ From Lemma \ref{regpde} combined with Proposition \ref{evergreen} we know that $w_\phi \in H^2_\nu \cap H^{\beta+2} \subset C^2$ for some $2 >\beta>d/2$. Let us show that $w_\phi$ further satisfies Condition \ref{eq:condition}
for some $\mu >0$ and subset $\Omega_{00} \subset \Omega$. 
% From Lemma 5.4\ref{regpde} combined with Proposition \ref{evergreen} 

\begin{enumerate}
    \item If $\phi = 1+\epsilon e_1$ with $|\nabla e_1|>0$ on $\Omega_{00}$,
    $$w_\phi = \mathcal{L}^{-1}_{D_1,0}[-\epsilon e_1] = \sum_{j=1}^{\infty} \frac{\epsilon \langle e_j, e_1 \rangle}{\lambda_j} e_j = \frac{\epsilon e_1}{\lambda_1}$$
    such that
    $|\nabla w_{\phi}|=|\frac{\epsilon \nabla e_1}{\lambda_1}|>0$ on $\Omega_{00}$. Since $\Delta w_\phi \in L^\infty$ we see that for $\mu$ large enough, Condition \ref{eq:condition} is satisfied with $u_0=w_\phi$.

    \item If $\phi \ge 1+\epsilon$ with $\epsilon >0$ on $\Omega_{00}$ then $\phi_0 \ge \epsilon$ on $\Omega_{00}$. Let us write $\bar w_\phi = - w_\phi$ then, 
    \begin{align}
        \phi_0 = \nabla \cdot (D_1 \nabla \bar w_\phi) = D_1 \Delta \bar w_\phi + \nabla D_1 \cdot \nabla \bar w_\phi.
    \end{align} This forces either
$D_1 \Delta \bar w_\phi (x) > \frac{\epsilon}{2} > 0$ and then $\Delta \bar w_\phi (x) > \frac{\epsilon}{2||D_1||_\infty} >0$, or 
$$\frac{\epsilon}{2} \leq \nabla D_1 (x) \cdot \nabla \bar w_\phi (x) \leq |\nabla \bar w_\phi (x)| |\nabla D_1 (x)|$$
so that  $|\nabla\bar  w_\phi|>\frac{\epsilon}{2||D_1||_{C^1}} $ on $\Omega_{00}$. Conclude that again, Condition \ref{eq:condition} is verified for $u_0=\bar w_\phi = - w_\phi$ and $\mu$ large enough, since $\Delta \bar w_\phi \in L^\infty$.
\end{enumerate}

\bigskip

\textbf{Step 2.} 
We next consider a constant potential $q=q_{min}>0$ on the domain $\Omega$. In this case the spectral decomposition of $\mathcal{L}_{D_1, q_{min}}$ is the same as the one of $\mathcal{L}_{D_1, 0}$ but with eigenvalues $$\lambda'_j = \lambda_j + q_{min}, ~~ j \in \mathbb N \cup \{0\},$$ shifted by $q_{min}$, including $\lambda_0=0$ corresponding to $e_0 =1$. The operator $\mathcal{L}_{D_1, q_{min}}$ thus has inverse $\mathcal{L}^{-1}_{D_1, q_{min}}$ on $L^2(\Omega)$ with spectral representation
    $$u=\mathcal{L}^{-1}_{D_1, q_{min}}[\varphi] = -\sum\limits_{j=0}^\infty \frac{1}{\lambda'_j}e_j\langle e_j, \varphi \rangle,~~ \varphi \in L^2,$$
solving $\nabla \cdot (D \nabla u)- q_{min}u=\varphi$ subject to Neumann boundary conditions. Now since $w_\phi$ solves $\mathcal{L}_{{D_1},0}w_\phi =-\phi_0$ with Neumann boundary conditions, the function
    $$v_{\phi} := w_{\phi} + \frac{\int_\Omega \phi}{q_{\min}}$$ also satisfies Neumann boundary conditions and solves
$$\mathcal{L}_{D_1, q_{\min}} v_{\phi} = \mathcal{L}_{D_1,0} v_\phi - q_{\min} v_\phi = -\phi_0 - \int_\Omega \phi - q_{\min} w_\phi = -\phi - q_{\min} w_\phi.$$

Therefore if $u=u_{D_1,q_{min}} = \mathcal{L}^{-1}_{D_1,q_{min}}[-\phi]$ is the actual solution of the steady state Schrödinger equation $$\nabla \cdot (D_1 \nabla u) - q_{min} u = -\phi$$ with Neumann boundary conditions, then
\begin{align*}
    ||u_{D_1,q_{min}} - v_\phi||_{C^2} &=||\mathcal{L}^{-1}_{D_1,q_{min}}[-\phi]-\mathcal{L}^{-1}_{D_1,q_{min}}[-\phi -q_{min} w_\phi] ||_{C^2} \\
    &= q_{min} ||\mathcal{L}^{-1}_{D_1,q_{min}}[w_\phi]|| _{C^2}  \lesssim q_{min} \|\mathcal{L}^{-1}_{D_1,q_{min}}[w_\phi]\|_{\tilde H_{D_1, q_{min}}^{2+\beta}} \lesssim q_{min} 
\end{align*}
using the Sobolev imbedding ($d<4$) as well as the elliptic regularity Lemma \ref{regpde}. When Condition \ref{eq:condition} holds for $w_{\phi}$ then it also holds for $v_\phi$ as the constant subtracted vanishes after taking the gradient and the Laplacian. Hence for $q_{min}$ small enough the function $u_{D_1,q_{min}}$ satisfies Condition \ref{eq:condition} too since its $C^2$-norm distance to $v_\phi$ can be made as small as desired.

\smallskip

\textbf{Step 3.} Now, let $u_{D_1,q} = \mathcal{L}^{-1}_{D_1,q}[-\phi]$ be the solution of the steady state Schrödinger equation with non-constant potential $q$ such that $\|q-q_{min}\|_{H^\eta}$ is small. We then see that $$\mathcal L_{D_1,q}(u_{D_1,q}-u_{D_1,q_{min}}) = (q-q_{min}) u_{D_1, q_{min}}.$$ We have that $u_{D_1,q_{min}}$ lies in $\tilde H^2 = H^2_\nu$ and so does $q-q_{min}$ as it belongs to $H^2$ and is constant near the boundary. Therefore  $(q-q_{min})u_{D_1,q} \in H^2_\nu$ as well in view of (\ref{multi}) and $d<4$. Hence we obtain from the Sobolev imbedding with $d/2<\beta<2$, the definition of the $\tilde H^b$-norms, and again (\ref{multi}) %(\ref{multi})
\begin{align*}
    ||u_{D_1,q} - u_{D_1,q_{min}}||_{C^2}
    &\lesssim ||\mathcal{L}^{-1}_{D_1,q}[(q-q_{min})u_{D_1,q_{min}}] ||_{\tilde H_{D_1, q}^{2+\beta}} \\
    &\lesssim ||(q-q_{min})u_{D_1,q_{min}}||_{\tilde H_{D_1, q}^2} \\
    &\lesssim ||q-q_{min}\|_{H^2} \|u_{D_1,q_{min}}||_{H^2},
\end{align*}
and the r.h.s.~can be made as small as desired for $\|q-q_{min}\|_{H^2}$ small enough, so that again Condition \ref{eq:condition} is inherited by $u_{D_1,q}$. From what precedes Lemma \ref{transplem} applies to give for any $h \in C^1$ that vanishes on $\Omega $\textbackslash$ \Omega_{00}$
$$||\nabla \cdot (h \nabla u_{D_1,q})||_{L^2} \geq c ||h||_{L^2}$$
and applying this to $h=D_1-D_2$ gives 
\begin{equation}
    ||D_1 - D_2 ||_{L^2} \lesssim || \nabla \cdot ((D_1-D_2) \nabla u_{D_1,q})||_{L^2}.
\end{equation}

Now, since $u_{D_1,q} $ solves $\mathcal{L}_{D_1,q}u=-\phi$, and $u_{D_2,q}$ solves the same equation with $D_2$ in place of $D_1$, the r.h.s equals, with all $L^2$-norms over $\Omega_{00}$,
\begin{align*}
    || \nabla \cdot ((D_1-D_2) \nabla u_{D_1,q})||^2_{L^2} &= || \nabla \cdot (D_1 \nabla u_{D_1,q}) - \nabla \cdot (D_2 \nabla (u_{D_1,q}-u_{D_2,q}+u_{D_2,q}))||^2_{L^2}\\
    &\lesssim || - \phi + q u_{D_1,q} + \phi - q u_{D_2,q}||^2_{L^2} + ||\nabla \cdot (D_2 \nabla (u_{D_1,q}-u_{D_2,q}))||^2_{L^2} \\
    &= ||q (u_{D_1,q} - u_{D_2,q})||^2_{L^2} + ||\nabla \cdot (D_2 \nabla (u_{D_1,q}-u_{D_2,q}))||^2_{L^2}\\
    & \lesssim || u_{D_1,q} - u_{D_2,q}||^2_{H^2(\Omega_{00})}.
\end{align*}
This completes the proof of Theorem \ref{yetagain}.

% Change title according to manuscript
\begin{acks}[Acknowledgments]
The authors would like to thank Aleksandra Jartseva, David Klenerman and
Ernest Laue for stimulating discussions and for drawing our attention to
the problem of inferring diffusivity in biochemistry. The authors would
also like to thank three anonymous referees, the associate editor, as well
as Paula Horvat, for their constructive comments.
\end{acks}
\begin{funding}%% 1. Put support info here; 2. Support of each author in new paragraph
RN and FS were supported
by ERC Advanced Grant (Horizon Europe UKRI G116786) and RN was further
supported by EPSRC Grant EP/V026259.
\end{funding}

%%%%%%%%%%%%%%%%%%%%%%%%%%%%%%%%%%%%%%%%%%%%%%%%%%%%%%%%%%%%%
%%                  The Bibliography                       %%
%%                                                         %%
%%  imsart-???.bst  will be used to                        %%
%%  create a .BBL file for submission.                     %%
%%                                                         %%
%%  Note that the displayed Bibliography will not          %%
%%  necessarily be rendered by Latex exactly as specified  %%
%%  in the online Instructions for Authors.                %%
%%                                                         %%
%%  MR numbers will be added by VTeX.                      %%
%%                                                         %%
%%  Use \cite{...} to cite references in text.             %%
%%                                                         %%
%%%%%%%%%%%%%%%%%%%%%%%%%%%%%%%%%%%%%%%%%%%%%%%%%%%%%%%%%%%%%

% imsref loaded by aurimas.acas, 2026-07-09 11:42:55
% imsref loaded by aurimas.acas, 2026-07-09 11:54:51

\end{document}